\theoremstyle{plain}
\newtheorem{theorem}{Theorem}[section]
\newtheorem{proposition}[theorem]{Proposition}
\newtheorem{lemma}[theorem]{Lemma}
\newtheorem{corollary}[theorem]{Corollary}
\theoremstyle{definition}
\newtheorem{definition}[theorem]{Definition}
\newtheorem{assumption}[theorem]{Assumption}
\theoremstyle{remark}
\newtheorem{remark}[theorem]{Remark}
\renewcommand\cite[1]{\citep{#1}} 
\begin{document}

%

%

\twocolumn[

\aistatstitle{Analysis of Kernel Mirror Prox for
Measure Optimization}

\aistatsauthor{ Pavel Dvurechensky
\And Jia-Jie Zhu
}

\aistatsaddress{\\
\{dvureche, zhu\}@wias-berlin.de \\
Weierstrass Institute for Applied Analysis and Stochastics\\
Berlin, Germany} ]

\begin{abstract}
  By choosing a suitable function space as the dual to the non-negative measure cone,
  we study in a unified framework a class of functional saddle-point optimization problems, which we term the Mixed Functional Nash Equilibrium (MFNE), that underlies several existing machine learning algorithms, such as implicit generative models, distributionally robust optimization (DRO), and Wasserstein barycenters.
  We model the saddle-point optimization dynamics as an interacting Fisher-Rao-RKHS gradient flow when the function space is chosen as a reproducing kernel Hilbert space (RKHS). As a discrete time counterpart, we propose a primal-dual kernel mirror prox (KMP) algorithm, which uses a dual step in the RKHS, and a primal entropic mirror prox step.
  We then provide a unified convergence analysis of KMP in an infinite-dimensional setting for this class of MFNE problems, which establishes a convergence rate of $O(1/N)$ in the deterministic case and $O(1/\sqrt{N})$ in the stochastic case, where $N$ is the iteration counter. As a case study, we apply our analysis to DRO, providing algorithmic guarantees for DRO robustness and convergence.
\end{abstract}

    \section{INTRODUCTION}

    Modern analysis of machine learning algorithms in high dimensions leverages the view of optimization over probability measures.
    For example, some state-of-the-art analysis of generative adversarial networks and adversarial robustness formulates the training process as finding a mixed Nash Equilibrium (MNE) in a two-player zero-sum game~\citep{hsieh2019finding,domingo-enrich_mean-field_2020,wang_exponentially_2022,trillos_adversarial_2023,kimSymmetricMeanfieldLangevin2024}.
    The MNE problem seeks the solution, i.e., MNE, of the saddle-point optimization problem
    \begin{equation}
      \begin{aligned}
        \label{eq-mne-intro}
    \inf_{\mu\in \mathcal M}  \sup_{\nu\in \mathcal M} G (\mu, \nu),
      \end{aligned}
    \end{equation}
    where $G(\mu, \nu)$
    is a bi-variate objective function(al) of the probability measures $\mu, \nu$, and $\mathcal M$ is the space of probability measures on a convex compact domain\eg 
    a bounded closed convex subset of $\mathbb R^d$.
    Related to \eqref{eq-mne-intro}, we are also interested in treating (constrained) measure optimization problems of the form
    \begin{equation}
    \begin{aligned}
      \label{eq-meas-opt-intro}
  \inf_{\mu\in B\subset \mathcal M}  {\mathcal{E}} (\mu).
    \end{aligned}
  \end{equation}

    In machine learning, there exists a commonly-used alternative paradigm to directly optimizing the measures, such as in the MNE problem~\eqref{eq-mne-intro} {or in \eqref{eq-meas-opt-intro}}.
    This paradigm exploits the duality between probability measure space $\mathcal M$ and a dual functional {space,} i.e., instead of optimizing w.r.t. a measure, we solve the {(constrained)} \emph{functional optimization problem}
    \begin{equation}
      \inf_{f\in \mathcal F} \mathcal V ( f),
      \label{eq-fn-opt}
    \end{equation}
    where $\mathcal F$ is some set of dual functions.
    Such methods often leverage scalable learning models such as reproducing kernel Hilbert spaces (RKHS) and deep neural networks for parameterizing and manipulating the dual function $f$, instead of directly searching in the space of probability distributions as in \eqref{eq-mne-intro} or \eqref{eq-meas-opt-intro}. 
    We have witnessed success of
    such dual functional approaches in many domains of machine learning research, e.g.,
    generative modeling~\citep{nowozin_f-gan_2016,arjovsky2017wasserstein,gulrajani_improved_2017,korotin_wasserstein-2_2020}
    computing optimal transport~\citep{genevay_stochastic_2016}, (see also \cite{dvurechensky2018computational,jambulapati2019direct,guminov2021combination} for discrete transport setting),
    distributionally robust optimization~\citep{zhu_kernel_2021},
    Wasserstein barycenters~\citep{li_continuous_2020,tiapkin_stochastic_2021,korotin_continuous_2021,dvinskikh2021improved}.
    However, optimization over those functional spaces, e.g., RKHSs or neural networks, is inherently an infinite-dimensional problem that requires careful treatment. In particular, algorithms and their convergence analysis do not directly follow from the finite-dimensional setting.
    
 In certain situations, the dual problem \eqref{eq-fn-opt} may turn out to be non-smooth and from the algorithmic perspective, it is better to keep both functional and measure variables and consider a primal-dual problem formulation. This drives us to our main problem of interest in this paper. Namely, we formulate the \emph{Mixed Functional Nash Equilibrium} (MFNE),
    whose outer optimization is an optimization problem over a set $\mathcal F$ of functions
    \begin{equation}
      \begin{aligned}
        \label{eq-general}
      \inf_{f\in \mathcal F} \sup_{\mu\in \mathcal M} F ( f, \mu),
      \end{aligned}
    \end{equation}
    which, like the MNE, is a special case of the infinite-dimensional pure NE problem.
    \mfneRef has appeared in several cutting-edge algorithms in ML research, which we discuss in detail in Section~\ref{sec-bg-dual-metric}. In these examples, algorithmic treatment is often made on a case-by-case basis and a unified convergence analysis for those applications is still missing. In this paper, we make a first step towards a unified treatment of such applications by considering a subclass of MFNE problems where $\mathcal{F}$ is a subset of RKHS, which we call \emph{Mixed Kernel Nash Equilibrium} (MKNE). A prominent example of such problems, as we show, is the distributionally robust optimization problem with the ambiguity set given by the kernel maximum mean discrepancy (MMD) distance.

    We summarize our contributions as the following.
    \begin{enumerate}[noitemsep]
        \item 
        We model the infinite-dimensional continuous-time optimization dynamics of the general functional optimization problem~\eqref{eq-fn-opt} as RKHS gradient flows, which enjoys simple convexity structure.
        Then, motivated by an interacting gradient flow~\eqref{eq-react-eq}-\eqref{eq-kgf-2-main} that couples the Fisher-Rao and RKHS gradient flow,
        we derive a primal-dual kernel mirror prox algorithm for solving \mfneRef with $\mathcal{F}$ being a subset of Hilbert space.
        \item 
        We show that our specialized infinite-dimensional mirror prox algorithm, which we term primal-dual kernel mirror prox (KMP), can be applied to the MKNE problem to obtain a convergence rate $\mathcal O (\frac{1}{N})$ in the deterministic and $\mathcal O (\frac{1}{\sqrt{N}})$ in the stochastic settings, where $N$ is the iteration counter. To the best of our knowledge, it is the first analysis with the kernel mirror prox steps in the dual functional space, which differs from the typical mean-field analysis of measure optimization.
        \item 
        Our framework can be applied to existing learning tasks to derive convergence results.
        As a concrete case study, we apply our analysis to distributionally robust optimization (DRO) to establish a convergence rate via primal-dual kernel mirror prox in the case of MMD ambiguity sets.
        Importantly, our method does not require the loss to be convex w.r.t. the uncertain variable, i.e., $l(\theta; x)$ can be non-convex non-linear in $x$ in \eqref{eq-dro-ipm}, which is much more general than common assumptions in the Wasserstein DRO literature.
        We then provide robustness guarantees for the KMP solution both in the population risk and under distribution shift.
    \end{enumerate}
    
    \section{PRELIMINARIES}
    \paragraph*{Notation.}
    We use $\mathcal M= \probSimplex$ to denote the space of probability measures defined on the closure of a bounded convex domain $\Omega\subset \mathbb R^d$.
    We say that \eqref{eq-general} is \cvxcnc if the inner problem is concave maximization and the outer problem is convex minimization.
    The convexity notion, if not otherwise specified, refers to the regular convexity notion (defined in \eqref{eq-convexity}).
    For PDE gradient flows, the states are functions of both time and space, for example, $u(t, x)$. When there is no ambiguity, we write $u(t):=u(t, \cdot)$ to denote the function at evolutionary time $t$.
    If not otherwise specified, proofs are deferred to the appendix.

    \subsection{Duality of Metrics on Probability Measures}
    \label{sec-bg-dual-metric}
    One reason behind the ubiquity of the functional optimization problem~\eqref{eq-fn-opt} and \mfneRef in machine learning problems is that common probability metrics admit a dual characterization.
    For example, the \emph{optimal transport} distance~\citep{santambrogio_optimal_2015,ambrosio2008gradient}, e.g., $p$-Wasserstein metric $W_p$ with $p\geq 1$, can be characterized in the dual space via the dual Kantorovich problem
    \vspace{-0.3cm}
    \begin{align}
      \label{label-kantorovich-dual}
        {W_p^p} (\mu, \nu)
        =
        &
        \sup_{\psi_1, \psi_2} 
      \int\psi_1\,\dd\mu+
      \int \psi_2\,\dd\nu
      \\
      &
      \textrm{ s.t. }
      \psi_1(x) +  \psi_2(y) \leq c(x,y),
    \end{align}
    $\forall x, y , \textrm{ a.e.}$
    which is an infinite-dimensional optimization problem with an infinite constraint.
    Bounded continuous functions $\psi_i$ are referred to as the Kantorovich potential functions. $c(x,y)=\|x-y\|_p^p$ is the transport cost function associated with the transport.
    In the machine learning literature, researchers have explored this dual formulation by directly parameterizing the Kantorovich potential $\psi_i$'s, e.g., using RKHS functions \pd{\citep{genevay_stochastic_2016,vacher21dimension-free}} or input convex neural networks (ICNNs) \citep{li_continuous_2020,korotin_wasserstein-2_2020}. 
    
    Another commonly used metric is the integral probability metric (IPM), which is defined via the weak norm formulation
        \begin{equation}
        \label{eq:IPM_def}
           \operatorname{IPM}( \mu, \nu) = \sup_{f\in \mathcal F}\int f \dd (\mu-\nu). 
        \end{equation}
    One particular choice of the test function family is the RKHS-norm-ball
    $\mathcal F=\left\{f : \hnorm{f}\leq 1\right\}$ (here $\mathcal{H}$ is a RKHS), which yields the kernel maximum mean discrepancy (MMD)~\citep{gretton_kernel_2012}.
    The choice
    $\mathcal F=\{f: \operatorname{Lip}{(f)}\leq 1\}$ (i.e., class of $1$-Lipschitz continuous functions) recovers the type-1 (Kantorovich-)Wasserstein metric.
    
    Such basic duality characterizes the relationship of the underlying geometry of the learning problem and the variational problem that optimizes w.r.t. functions, such as the IPM test functions and the Kantorovich potentials in OT.
    This has been exploited in several fields in machine learning.
    
    \textbf{Implicit generative models (IGM).}
    We consider the following IGM formulation \citep{li_generative_2015,dziugaite_training_2015,goodfellow_generative_2014},
    \begin{align}
      \label{eq-igm}
    \inf_{G_\theta} \mathbb E_Z \mathcal{D}( P ,G_\theta(Z)),
    \end{align}
    where $G_\theta$ is the generator function parameterized by $\theta\in \mathbb{R}^d$, $P$ is a target distribution, $Z$ is a random variable with (simple) latent distribution $Q$, $\mathcal{D}$ can be chosen as a discrepancy measure between distributions such as the
    $f$-divergence family,
    optimal transport distance,
    or kernel MMD.
    In IGM, $P$ is often taken to be the training data distribution.
    Recent theoretical analysis of the optimization for training~\eqref{eq-igm} \emph{lifts} the non-convex optimization problem to the space of probability measures, which is an instantiation of a MNE problem~\eqref{eq-mne-intro}.
    The problem can then be cast into this paper's general MFNE formulation~\eqref{eq-general}, for example, by choosing the function $\mathcal{D}$ from the IPM family \eqref{eq:IPM_def}
    \begin{align}
    \inf_{\mu\in \mathcal M}
     \sup_{f\in \mathcal F}
    \biggl\{
    \int f(x) d P(x) - 
    \mathbb E_{\theta\sim \mu}
    \int f (g_\theta(z)) d Q(z)
    \biggr\}
    ,
    \end{align}
    where $g_\theta$ is the generator.
    If the function class $\mathcal F$ is chosen to be the class of $1$-Lipschitz functions, then the formulation is the Wasserstein GAN~\citep{arjovsky2017wasserstein,gulrajani_improved_2017}.
    On the other hand, if $\mathcal F$ is the RKHS-norm ball, this is the MMD GAN~\cite{li_mmd_2017,binkowski_demystifying_2018}.
    Note that this lifted problem is now \emph{\cvxcnc} in the optimization variables.
    
    \textbf{Distributionally robust optimization (DRO).}
    We now consider the metric-ball-constrained DRO problem~\citep{delage_distributionally_2010,ben2013robust,gao_distributionally_2016,zhao_data-driven_2018}
    \begin{align}
      \label{eq-dro-ipm}
      \inf_{\theta \in \Theta} \sup_{\mu : \mathcal D(\mu, \emp) \le \epsilon} \E_\mu[l(\theta; x)],
    \end{align}
    where the set of decision variables $\Theta$ is a closed convex set, e.g., $\Theta \subseteq \mathbb{R}^d$, $\mathcal D$ is a discrepancy measure between distributions, $\emp$ is the empirical data distribution, $l(\theta;x)$ is the loss function given parameter $\theta$ and data point $x$.
    Below, we consider DRO in the two settings of probability metric $\mathcal D$: the $p$-Wasserstein DRO ($p\geq 1$)~\citep{gao_distributionally_2016}
    as well as the Kernel(-MMD) DRO~\citep{zhu_kernel_2021}.
    Our scope is crucially broader than the common Wasserstein DRO reformulations using conic linear duality,
    which often omit the discussion of \emph{non-linear-in-$x$ losses} common in machine learning.
    We propose the following primal-dual reformulation of DRO.
    \begin{lemma}
      [Primal-dual reformulation of Wasserstein and Kernel DRO]
      \label{thm-dro-reform-lemma}
      Suppose the probability metric is chosen to be the optimal transport metric (e.g., $p$-Wasserstein distance).
      Then, the DRO problem~\eqref{eq-dro-ipm} admits the reformulation
      \begin{align}
      \inf_{ \stackrel{\gamma>0,\theta \in \Theta,}{
      f\in\Psi_c}}     
      \sup_{\mu\in\mathcal M} 
      \E_{\mu}{(l(\theta;x)- \gamma f(x))}
      -\frac {\gamma}n\sum_{i=1}^n f^c (x_i)
      + \gamma \epsilon
    .
    \label{eq:wdro-potential-reform}
    \end{align}
    $\Psi_c$ is the set of $c$-concave functions~\citep{santambrogio_optimal_2015} and $f^c(y):=\inf_x c(x,y)-f(x)$ denotes the $c$-transform.

      Suppose the probability metric $\mathcal D$ is the MMD,
      then the DRO problem~\eqref{eq-dro-ipm}
      is equivalent to the smooth saddle-point problem
      \begin{multline}
        \label{eq:kdro_smooth}
            \inf_{\theta \in \Theta, f\in \rkhs}
            \sup_{\mu\in \mathcal M, h \in \rkhs: \|h\|_{\rkhs}\leq 1}
            \\
            \Biggl( 
              \frac 1n\sum_{i=1}^n f (x_i) + \epsilon \langle h,f \rangle
               + \E_{ \mu} {(l(\theta;x) - f(x))}
            \Biggr)
                .
        \end{multline}
    \end{lemma}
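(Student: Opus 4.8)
The plan is to treat the two cases separately; in each the inner worst-case expectation is pushed through a short chain of closed-form dualizations followed by a single minimax interchange.

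\emph{The MMD case.} Write $m_\mu := \int k(x,\cdot)\,\dd\mu(x)\in\rkhs$ for the kernel mean embedding, so that $\mmd(\mu,\emp)=\hnorm{m_\mu-m_{\emp}}$ and, by the reproducing property, $\hiprod{f}{m_\mu}=\E_\mu[f(x)]$ \citep{gretton_kernel_2012}. First I would encode the ball constraint $\hnorm{m_\mu-m_{\emp}}\le\epsilon$ via the elementary identity that $\inf_{f\in\rkhs}\bigl(-\hiprod{f}{v}+\epsilon\hnorm{f}\bigr)$ equals $0$ when $\hnorm{v}\le\epsilon$ (Cauchy--Schwarz) and $-\infty$ otherwise (take $f=tv$, $t\to\infty$). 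Applying this with $v=m_\mu-m_{\emp}$ and using the reproducing property rewrites the inner problem of \eqref{eq-dro-ipm} for fixed $\theta$ as
\[
\sup_{\mu\in\mathcal M}\;\inf_{f\in\rkhs}\;\Bigl(\E_\mu\bigl[l(\theta;x)-f(x)\bigr]+\tfrac1n\textstyle\sum_{i=1}^n f(x_i)+\epsilon\hnorm{f}\Bigr).
\]
Next I would apply Sion's minimax theorem to swap $\sup_\mu$ and $\inf_f$: $\mathcal M$ is convex and weak-$*$ compact (as $\bar\Omega$ is compact), $\rkhs$ is convex, and the bracketed functional is affine and weak-$*$ upper semicontinuous in $\mu$ (using that $l(\theta;\cdot)$ is continuous and that RKHS functions are bounded continuous for a bounded continuous kernel) and convex and norm-continuous in $f$. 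Finally, substituting $\epsilon\hnorm{f}=\sup_{h\in\rkhs,\,\hnorm{h}\le1}\epsilon\hiprod{h}{f}$ and merging the two suprema yields precisely the objective of \eqref{eq:kdro_smooth}; taking $\inf_{\theta\in\Theta}$ on both sides finishes this case, and a correspondence between the minimizers of the two formulations follows by tracking the attained points.

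\emph{The Wasserstein case.} The first step is Lagrangian duality for the transport-cost ball $\mathcal D(\mu,\emp)=\inf_{\pi\in\Pi(\mu,\emp)}\int c\,\dd\pi\le\epsilon$ (with $c=\|\cdot\|_p^p$ for $p$-Wasserstein). Since $\emp$ is strictly feasible ($\mathcal D(\emp,\emp)=0<\epsilon$), $\mu\mapsto\E_\mu[l(\theta;x)]$ is weak-$*$ upper semicontinuous and affine, $\mu\mapsto\mathcal D(\mu,\emp)$ is weak-$*$ lower semicontinuous and convex, and $\mathcal M$ is weak-$*$ compact, so strong duality gives $\sup_{\mu:\,\mathcal D(\mu,\emp)\le\epsilon}\E_\mu[l(\theta;\cdot)]=\inf_{\gamma\ge0}\sup_{\mu\in\mathcal M}\bigl(\E_\mu[l(\theta;x)]-\gamma\mathcal D(\mu,\emp)+\gamma\epsilon\bigr)$ (the standard Wasserstein-DRO dual; cf.\ \citep{gao_distributionally_2016,zhao_data-driven_2018}). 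I would then insert the Kantorovich dual $\mathcal D(\mu,\emp)=\sup_{f\in C(\bar\Omega)}\bigl(\int f\,\dd\mu+\int f^c\,\dd\emp\bigr)$ \citep{santambrogio_optimal_2015} to obtain $\inf_{\gamma\ge0}\sup_{\mu}\inf_{f\in C(\bar\Omega)}\bigl(\E_\mu[l(\theta;x)-\gamma f(x)]-\tfrac\gamma n\sum_i f^c(x_i)+\gamma\epsilon\bigr)$, and apply Sion's theorem a second time — over the compact convex $\mathcal M$ and the convex $C(\bar\Omega)$, using that $f\mapsto f^c(x_i)=\inf_x\bigl(c(x,x_i)-f(x)\bigr)$ is concave and $1$-Lipschitz for the sup-norm — to swap $\sup_\mu$ and $\inf_f$. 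It remains to reduce the dual variable from $C(\bar\Omega)$ to the $c$-concave functions $\Psi_c$: replacing any $f$ by its $c$-concave envelope $f^{cc}=(f^c)^c\ge f$ leaves $f^c$ (hence the sum term) unchanged and can only decrease $\sup_x\bigl(l(\theta;x)-\gamma f(x)\bigr)$, so the infimum over $C(\bar\Omega)$ equals that over $\Psi_c$. Absorbing $\gamma$ into the potential (so that $\gamma f$ and $\gamma f^c$ appear) and taking $\inf_{\theta\in\Theta}$ gives \eqref{eq:wdro-potential-reform}; the discrepancy $\gamma>0$ versus $\gamma\ge0$ is immaterial, since $\gamma=0$ becomes active only for $\epsilon$ above the unconstrained worst-case threshold.

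\emph{Main obstacle.} The closed-form dualizations are routine; the load-bearing steps are the two minimax interchanges and the accompanying regularity checks. For the MMD case Sion's theorem applies cleanly because all measures live on the compact $\bar\Omega$, the kernel is bounded and continuous, and $\mathcal M$ is weak-$*$ compact, so the only genuine verification is weak-$*$ upper semicontinuity of $\mu\mapsto\E_\mu[l(\theta;x)]$, for which upper semicontinuity of the loss in $x$ suffices. For the Wasserstein case the delicate point is the strong Lagrangian duality for the $\mathcal D$-ball: one must ensure the inner supremum is finite and there is no duality gap, which is precisely where strict feasibility of $\emp$, compactness of $\bar\Omega$, and lower semicontinuity of the cost enter; alternatively one may quote the known Wasserstein-DRO strong-duality theorem and then only verify the $c$-transform reformulation, whose single nontrivial inequality is exactly the $c$-concave-envelope argument above.
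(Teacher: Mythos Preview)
Your proposal is correct and reaches the same conclusions, but the route differs from the paper's in a few places worth noting.

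For the MMD case, the paper does \emph{not} encode the ball constraint via the convex-indicator identity you use. Instead it introduces the kernel mean embedding as an auxiliary variable $h=\int\phi\,\dd\mu$ with a Lagrange multiplier $f\in\rkhs$ for the linear constraint $\int\phi\,\dd\mu=h$, and a separate scalar multiplier $1/(2\tau)$ for the squared constraint $\mmd^2\le\epsilon^2$. It then eliminates $h$ in closed form (a quadratic in $\rkhs$) to obtain the intermediate penalized form with $\frac{\tau}{2}\|f\|_\rkhs^2+\frac{\epsilon^2}{2\tau}$, optimizes over $\tau$ to get $\epsilon\|f\|_\rkhs$, and only \emph{then} applies the dual-norm smoothing $\epsilon\|f\|_\rkhs=\sup_{\|h\|\le1}\epsilon\langle h,f\rangle$ with a fresh variable $h$. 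Strong duality is justified by citing the infinite-dimensional LP duality argument of \citet{zhu_kernel_2021} rather than Sion. Your derivation is more compact: one indicator dualization, one Sion swap, one dual-norm substitution. The paper's route, by contrast, yields the two intermediate reformulations \eqref{eq-kdro-dual-ap-1}--\eqref{eq-kdro-dual-ap-2} as by-products, which motivates the $\tau$-penalized variant sometimes used in practice.

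For the Wasserstein case the paper gives only a one-line sketch (``combine the Kantorovich dual with Lagrange duality, similarly to the MMD case''), so your argument is strictly more detailed: in particular, the $c$-concave-envelope reduction from $C(\bar\Omega)$ to $\Psi_c$ and the second Sion swap are steps the paper leaves implicit. Your care about strict feasibility of $\emp$ and weak-$*$ lower semicontinuity of $\mu\mapsto\mathcal D(\mu,\emp)$ is exactly what is needed to make that sketch rigorous.
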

    Note that $c$-concavity of a function $f$ means that $f$ is the $c$-transform of a bounded continuous function.
    Lemma~\ref{thm-dro-reform-lemma} shows that those primal-dual DRO formulations have the \emph{\cvxcnc} structure in $\mu, f$ as in the \pd{MFNE} \eqref{eq-general}. 
    (Although the convexity w.r.t. $\mu$ of \eqref{eq:wdro-potential-reform} is lost along the Wasserstein geodesics~\citep{ambrosio2008gradient}.)
    They are also convex in the learning model parameter $\theta$ when the loss $l$ is, 
    \eqref{eq:kdro_smooth} is concave in the smoothing variable $h$, and
    \eqref{eq:wdro-potential-reform} is trivially convex in the dual variable $\gamma$.
    
    \textbf{Wasserstein barycenter.}
    Another important application is the Wasserstein barycenter problem~\citep{agueh2011barycenters,peyre2018computational,li_continuous_2020,tiapkin_stochastic_2021,kroshnin2019complexity,korotin_continuous_2021,dvurechensky2018decentralize}, which can be formulated as a saddle-point optimization problem.
    \begin{multline}
      \min_{\mu \in \mathcal M} 
      \sum_{i=1}^{n}
      \alpha_i
      \left[ {W_p^p}(\mu, \nu_i) \right] 
      \\
      =
      \min_{\mu \in \mathcal M}
      \sum_{i=1}^{n}
      \alpha_i
      \sup_{f_i \in  \Psi_c}
      \left\{
      \int f_i^c\dd \mu
      +   \int 
      f_i \dd \nu_i
      \right\},
    \end{multline}
    where
    $f_i^c$ again denotes the $c$-transform.
    $\nu_i\in\mathcal M$ are given probability measures,
     $f_i \in\Psi_c$ are the $c$-concave Kantorovich potentials associated with the Wasserstein distance.
    
    As we have seen, several ML problems can be formulated in our framework of the infinite-dimensional MFNE problem \eqref{eq-general}, which gives a unified optimization view on these applications. 
    An important role in these formulations is played by the dual function set $\mathcal{F}$. Note that in the aforementioned settings of $p$-Wasserstein ($p\neq 1$) metric, one may parameterize Kantorovich potential functions using tools such as the random Fourier features~\citep{rahimi_random_2007} and ICNN~\citep{amos_input_2017}, 
    such as done by \cite{genevay_stochastic_2016,makkuva_optimal_nodate,korotin_wasserstein-2_2020}.
    Therefore, as a first step towards the theoretical analysis of the MFNE problem and to simplify this analysis, we focus on the RKHS functions (and hence the kernel MMD in the DRO setting) setting in the rest of the paper. We underline that, even in this particular setting, we are not aware of algorithms and their convergence results.

    \subsection{Gradient Flow in Hilbert and Metric Spaces}
    Recent theoretical analysis of generative models via MNE,
e.g., \cite{hsieh2019finding,domingo-enrich_mean-field_2020,wang_exponentially_2022,trillos_adversarial_2023}, adopted the mean-field point of view closely related to PDE gradient flows of probability measures~\citep{ambrosio2008gradient,otto_double_1996,jordan_variational_1998,otto_geometry_2001}.
Notably, works such as \cite{hsieh2019finding} modeled training dynamics of GAN as sampling using Langevin SDE, which is also equivalent to solving the Fokker-Planck PDE.

Intuitively, a gradient flow describes a dynamical system that is driven towards the fastest dissipation of certain energy.
This system is called a \emph{gradient system}.
For example, the dynamical system described by 
an ordinary differential equation in the Euclidean space
that follows the negative gradient direction of some function $f$, \(\dot x(t) = - \nabla f(x(t)), x(t)\in \mathbb R^d \),
is a simple gradient system.
Of particular interest to this paper in the context of measure optimization ~\eqref{eq-meas-opt-intro} is the gradient flow in the Fisher-Rao metric space,  which is the gradient system that generates the 
following \emph{reaction equation} as its gradient flow equation
\begin{multline}
  \partial_t\mu = - \mu \cdot (\mathcal{E}'(\mu) - \int \mu \cdot \mathcal{E}'(\mu))
  ,\\
  \mu(0, x) = \mu^0(x)\in \mathcal M,
  \label{eq-react-eq-background}
\end{multline}
where $\mathcal{E}'(\mu)$ is the first variation.
Alternatively, one can also view the whole r.h.s. as the Fisher-Rao metric gradient.
More generally, gradient flows in metric spaces \citep{ambrosio2008gradient} have gained significant attention in recent machine learning literature due to the study of Wasserstein gradient flow (WGF), originated from the seminal works of Otto and colleagues, e.g., \cite{otto_double_1996,jordan_variational_1998,otto_geometry_2001}.
Rigorous characterizations of general metric gradient systems have been carried out in PDE literature, for which we refer to \cite{ambrosio2008gradient} for a complete treatment.

In practical optimization, this flow
corresponds to
the entropic mirror descent step
\begin{align}
  \label{eq:md-intro}
    \mu_{k+1}\gets \operatorname*{\arg\inf}_{\mu\in\mathcal M} \int \mathcal{E}'(\mu_k) \dd (\mu - \mu_k) + \frac{1}{\tau}D_{\mathrm{KL}}(\mu, \mu_k),
\end{align}
giving multiplicative update
         $\mu_{k+1} = \frac1Z \mu_{k} \cdot e^{-\tau \mathcal{E}'(\mu_k)}$,
         where $Z$ is a normalization constant.

    \section{RKHS GRADIENT FLOW FOR MODELING MEASURE OPTIMIZATION DYNAMICS}
    Our starting point is to model the infinite-dimensional continuous-time optimization dynamics of the functional optimization problem~\eqref{eq-fn-opt} using the RKHS gradient flows (RKHSGF)
    \begin{align}
      \partial_t f  = - \mathcal V '( f), &\quad f(0,x)=f^0(x)\in\rkhs,
      \label{eq-kgf}
    \end{align}
    where $\mathcal{V}'(f)$ is Fr\'echet (sub)differential in $\rkhs$.
    The usual properties of this gradient flow, such as existence and uniqueness are provided in the appendix.
    In principle, other function spaces in the literature can also be considered in practice, such as the random Fourier feature functions, single hidden-layer neural networks~\citep{bach2017breaking}, and ICNNs.
    To make the theoretical analysis tractable, we focus in the rest of the paper on the RKHS setting. A more general setting is left for future work.
    
    RKHS has been used in machine learning applications of gradient flows.
    For example, in \cite{arbel_maximum_2019}, the squared RKHS norm was used as the driving energy functional for the Wasserstein gradient flow, rather than its use as the dissipation geometry for the flow as in our formulation.
    A few other works such as \cite{chu2020equivalence,chewi_svgd_2020,duncanGeometrySteinVariational2023,liu_stein_2019,korba_kernel_2021} studied the kernelized Wasserstein gradient flow in the context of Stein geometry.
    Those particular cases do not exploit the simplicity of gradient flow structure in the Hilbert space.
    We outline some technical background regarding this gradient flow in the RKHS.

    One important distinction between the functional optimization dynamics in RKHSGF and the measure optimization in MNE optimization dynamics is 
    that we do not need advanced structures such as \emph{generalized geodesic convexity} from the WGF setting (see\eg \cite{ambrosio2008gradient})
    ---
    linearity or regular convexity is sufficient for RKHSGF.
    This can be seen in the following example.
    \begin{example}
    \label{lemma-geod-cvx}
      [Convexity]
      In a 2-Wasserstein space $(\mathcal M, W_2)$,
      linear energy functional
      $\mathcal E_0 (\mu ) = \int V_0 \dd \mu$ for $\mu\in (\mathcal M, W_2)$,
      is non-convex geodesically (Section~\ref{sec:geod-cvx-w2}) if and only if the function $V_0(x)$ is non-convex~\citep{ambrosio2008gradient}.
      On the other hand, 
      in an RKHS $\rkhs$,
      linear energy functional
      $\mathcal E_1 (f ) = \hiprod{V_1}{f} $ for $f\in\rkhs$
      is always convex, regardless of the non-convexity of $V_1$.
    \end{example}
    \begin{example}
      \label{ex:wdro-not-work}
      [Intractability of Wasserstein DRO with nonlinear losses]
      The Wasserstein-DRO problem
      \begin{align}
          \inf_{{\theta \in \Theta}}\sup_{ {W}_p(\mu, \emp) \le \epsilon} \E_\mu[l(\theta; x)]
      \end{align}
      where $l(\theta; x)$ is non-convex non-linear w.r.t. $x$ is intractable.
      As discussed in Example~\ref{lemma-geod-cvx}, despite the linear-in-measure inner problem, the objective function 
      $\E_\mu[l(\theta; x)]$
      is not geodesically convex w.r.t. $\mu$ in the Wasserstein space.
      See Section~\ref{sec:geod-cvx-w2} for more details.
      Hence the popular approaches using linear duality reformulation, used in a large body of literature, are not tractable for this class of problems, unless strong restrictions are imposed.
    In other words, there is still no free lunch despite the lifting to the linear structure.
      The only principled treatment in the non-convex case is via Lagrangian relaxation under non-convex objectives~\citep{sinha_certifying_2020}, which no longer preserves the ambiguity-ball structure.
      In contrast,
      we show convergence guarantees for Kernel-MMD DRO in Section~\ref{sec:dro} without placing restricting assumptions on $l(\theta; x)$ w.r.t. $x$.
    \end{example}
    In contrast, the RKHSGF in our formulation converges under the usual convexity in the Hilbert space.
    In the next section, we will use the explicit Euler discretization of this gradient flow, coupled with measure-update step,
    to derive the \emph{primal-dual kernel mirror prox} algorithm, which is the discrete-time counterpart to the interacting gradient flow. 
    \paragraph*{Interacting Gradient Flow Dynamics}
    Previously, \cite{domingo-enrich_mean-field_2020} proposed the Interacting Wasserstein Gradient Flow as the infinite-dimensional continuous-time optimization dynamics for solving the MNE problem~\eqref{eq-mne-intro}.
    We now model a gradient system that couples a gradient flow in the Fisher-Rao geometry and an \emph{RKHS gradient flow} (RKHSGF).
    We term the mean-field dynamics
    \emph{Interacting Gradient Flow}.
    Our perspective is to choose the (dual) function space in \mfneRef as a RKHS {$\mathcal{H}$}~\citep{wendland_scattered_2004,steinwart_support_2008,berlinet_reproducing_2011}.
    That is to say, letting $\mathcal F=\rkhs$ in \mfneRef, we solve a specific variation of MFNE, which is the \emph{Mixed Kernel Nash Equilibrium} (MKNE)
    \begin{equation}
      \begin{aligned}
        \label{eq-mixed-kernel-ne}
    \inf_{f\in \rkhs} 
    \sup_{\mu\in \mathcal M}  
    F (f,\mu).
      \end{aligned}
    \end{equation}
    The gradient flow equations governing the interacting system are
    \begin{align}
      &\partial_t\mu - \mu \cdot F'_\mu(f,\mu)
      +\int \mu \cdot F'_\mu(f,\mu)
      = 0,
      \label{eq-react-eq}
      \\
      &\partial_t f  + F'_f(f,\mu)=0, 
      \label{eq-kgf-2-main}
      \\
      &\quad \mu(0, x) = \mu^0(x)\in \mathcal M,
      \quad f(0,x)=f^0(x)\in\rkhs.
      \nonumber
    \end{align}
    where the derivatives $F'_\mu, F'_f$ are taken in the sense of the Fr\'echet differential w.r.t. $L^2$ and RKHS norms respectively.
    While it is also possible to use other spaces to replace $\rkhs$, such as neural networks to generate so-called neural flows,
    for the sake of analysis, this paper only focuses on the RKHS setting.

    When viewed standalone, the properties of the RKHS gradient flow equation~\eqref{eq-kgf-2-main} 
    and the \emph{reaction equation} \eqref{eq-react-eq} have already been discussed above.
    Our choice of the Fisher-Rao type of flows is due to that its relation to the entropic mirror descent algorithm~\eqref{eq:md-intro}, which
    is a common tool for optimizing over probability measures in practical computation.
    Later, we further provide the convergence guarantee for a discrete-time mirror prox algorithm based on our choice of flows.

    Motivated by the continuous dynamics, we now propose the discrete-time saddle-point optimization algorithm that mimics the continuous-time dynamics and prove its convergence rate bounds.

    \section{A PRIMAL-DUAL KERNEL MIRROR PROX ALGORITHM}
    \label{S:MP}

To construct our mirror prox algorithm, following \cite{hsieh2019finding}, we make some mild regularity assumptions.
As noted by those authors, the following assumptions can be met by most practical applications.
Namely, we restrict $ \mathcal M$ to the set of probability measures on {$\bar{\Omega}$} that admit densities w.r.t. the Lebesgue measure and have a density that is continuous and positive almost everywhere on {$\bar{\Omega}$}.
We note that this assumption is only made for the ease of the analysis. In practice, there are also numerous particle-based algorithms that can perform the mirror descent step, see Remark~\ref{rmk:particle-md}.
We also assume that there is a Hilbert space $\rkhs$ and a convex and closed set $H \subseteq \rkhs$.
For the sake of generality, we consider a slight variation of \mKneRef as the following general infinite-dimensional saddle-point problem on the spaces of measures and functions
\begin{align}
\label{eq:SPP_general}
    \inf_{f  \in H \subseteq \rkhs}
    \sup_{\mu\in \mathcal M}
    \quad F(f,\mu).
\end{align}
For shortness, we denote the set of all variables by $u:=(f,\mu)$ and slightly abuse notation by defining $F(u):=F(f,\mu)$. We consider here the setting of two variables only for simplicity. An extension for a more general problem formulation covering the DRO problem \eqref{eq:kdro_smooth} is provided in the appendix. Moreover, in the next section, we consider DRO problem \eqref{eq:kdro_smooth} as a particular case study and provide technical details to check that the main assumptions of this section hold for that problem.
Our first main assumption in this section is as follows.
\begin{assumption}
\label{Asm:SPP_gen_convexity}
The functional $F( f,\mu )$ is convex in $f$ for fixed $\mu$ and concave in $\mu$ for fixed $f$. 
\end{assumption}

\subsection{Preliminaries}
To construct the mirror prox algorithm for problem \eqref{eq:SPP_general} we need, first, to introduce proximal setup, which consists of norms, their dual, and Bregman divergences over the space of each of the variables. 


For the space of the variable $f$, we use the norm of the Hilbert space $\|\cdot\|_{\rkhs}$, distance generating function $d_{f}(f) = \frac{1}{2}\|f\|_{\rkhs}^2$, which gives Bregman divergence $B_{\rkhs}(f,\Breve{f})=\frac{1}{2}\|f-\Breve{f}\|_{\rkhs}^2$. This leads to the mirror descent step, given step size $\eta > 0$ and direction $\xi_f \in \rkhs$,
\begin{equation}
    f_+ = \MD^{f,H}_{\eta}(\pd{f_0},  \xi_f) = \arg \min_{\tilde{f} \in H} \{ \langle \tilde{f}, \eta \xi_f \rangle + \frac{1}{2}\|\tilde{f} - \pd{f_0}\|_{\rkhs}^2\}.
    \label{eq:md-step-rkhs}
\end{equation}
Note that the explicit expression for the above step is the projection of $\pd{f_0}-\eta \xi_f$ onto the set $H$ w.r.t. the RKHS norm.

For the space of the variable $\mu$, we follow \cite{hsieh2019finding} and, first, introduce the Total Variation (TV) norm for the elements of $\mathcal{M}$
$\| \mu \|_{TV} = \sup_{\|\xi\|_{L^{\infty}}\leq 1} \int \xi d\mu = \sup_{\|\xi\|_{L^{\infty}}\leq 1} \la \xi ,\mu\ra$,
where $\|\xi\|_{L^{\infty}}$ is the $L^{\infty}$-norm of functions. 
To define the mirror step, we use (negative) Shannon entropy and its Fenchel dual defined respectively as 
\begin{equation}
    \Phi(\mu) = \int d\mu \ln \frac{d\mu}{dx}, \quad \Phi^*(\xi) =  \ln \int e^\xi dx
\end{equation}
%
defined for $\xi$ from the space $\mathcal{M}^*$ of all bounded integrable functions on {$\bar{\Omega}$}. The corresponding Bregman divergence is the relative entropy given by  
\begin{equation}
    D_\Phi(\mu,\breve{\mu}) =  \int d\mu \ln \frac{d\mu}{ d\breve{\mu}}.
\end{equation}
As discussed previously in \eqref{eq:md-intro},
this leads to the mirror step
\begin{multline}
\label{eq-mirror-mu}
    \mu_+ = \MD^{\mu}_{\eta}(\pd{\mu_0},  \xi_\mu) 
    = 
    \arg \min_{\tilde{\mu} \in \mathcal M} \{ \langle \tilde{\mu}, \eta \xi_\mu \rangle + D_\Phi(\tilde\mu,{\pd{\mu_0}})
    \}.
    \\
    = d \Phi^*(d \Phi(\pd{\mu_0}) - \eta \xi_\mu)
    \equiv  \quad d\mu_+ 
    = \frac{e^{-\eta \xi_\mu}d\pd{\mu_0}}{\int e^{-\eta \xi_\mu}d\pd{\mu_0}}.
\end{multline}


Our second main assumption is as follows.
\begin{assumption}
\label{Asm:SPP_gen_smooth}
The functional $\pd{F(u):=}F(f,\mu)$  is Fr\'echet differentiable in $f$ w.r.t. the RKHS norm and in $\mu$, in $L^2$, and the derivatives \pd{$F'_f(u):=F'_f(f,\mu)$, $F'_{\mu}(u):=F'_{\mu}(f,\mu)$} are Lipschitz continuous in the following sense 
\begin{align}
&\|F'_f(u)-F'_f(\tilde{u})\|_{\mathcal{H}} \leq L_{f f}\|f-\tilde{f}\|_{\mathcal{H}} +  L_{f\mu}\|\mu-\tilde{\mu}\|_{TV} , \\
&\|F'_\mu(u)-F'_\mu(\tilde{u})\|_{L^{\infty}} \leq  L_{\mu f}\|f-\tilde{f}\|_{\mathcal{H}} +  L_{\mu\mu}\|\mu-\tilde{\mu}\|_{TV} .  
\end{align}
\end{assumption}
We also denote
\begin{equation}
\label{eq:L_general}
    L = \max\{L_{f,f},L_{f,\mu},L_{\mu,f},L_{\mu,\mu}\}.
\end{equation}

\subsection{Kernel Mirror Prox Algorithm and Its Analysis}

The updates of the general  infinite-dimensional mirror prox algorithm for problem \eqref{eq:SPP_general} are given in Algorithm \ref{alg:mirror_prox_gen}. \pd{Recall that, for shortness, we use the notation $u$ for the pair $(f,\mu)$ and slightly abuse notation by setting $F(u):=F(f,\mu)$.}
\begin{algorithm}[ht!]
\caption{General Mirror Prox}
{
\begin{algorithmic}[1]\label{alg:mirror_prox_gen}
    \REQUIRE{ Initial guess ${\tilde{u}_0:=}( \tilde{f}_0,\tilde{\mu}_0 )$, step-sizes $ \eta_f,\eta_\mu  >0$.}
    \FOR{$k = 0, 1, \ldots, N - 1$}
        \STATE{Compute
        \begin{align*}
                \hspace{-3em}f_{k} = \MD^{f,H}_{\eta_f}(\tilde{f}_k, F'_f(\tilde{u}_k)), 
                \mu_{k} = \MD^{\mu}_{\eta_\mu}(\tilde{\mu}_k, -F'_\mu(\tilde{u}_k)).
            \end{align*}
            }
        \STATE{Compute
            \begin{align*}
                \hspace{-3em}\tilde{f}_{k+1} = \MD^{f,H}_{\eta_f}(\tilde{f}_k,F'_f(u_k)), 
                \tilde{\mu}_{k+1} = \MD^{\mu}_{\eta_\mu}(\tilde{\mu}_k, -F'_\mu(u_k)).
            \end{align*}
            }
    \ENDFOR
    \STATE Compute $( \bar{f}_N,\bar{\mu}_N)=\bar{u}_N = \frac{1}{N}\sum_{k=0}^{N-1} u_k$.
\end{algorithmic}
}
\end{algorithm}
For its analysis we need the following auxiliary results. The first one is used for the mirror steps applied to the variable $f$.
\begin{lemma}
\label{Lm:rkhs_step}
Let $\mathcal H$ be a Hilbert space and let $H\subset \mathcal H$ be convex and closed. Let $\tilde{h} \in H$ and $\xi, \tilde{\xi} \in \mathcal H^*=\mathcal H$, $\eta >0$, and 
\begin{align*}
    h&= \arg \min_{\hat{h}\in H} \left\{\la \hat{h}, \eta \xi   \ra + \frac{1}{2}\|\tilde{h}-\hat{h}\|_{\mathcal{H}}^2\right\} = \MD^{h,H}_{\eta}(\tilde{h},  \xi), \\
    \tilde{h}_+&= \arg \min_{\hat{h}\in H} \left\{\la \hat{h}, \eta \tilde{\xi}  \ra + \frac{1}{2}\|\tilde{h}-\hat{h}\|_{\mathcal{H}}^2\right\} = \MD^{h,H}_{\eta}(\tilde{h},  \tilde{\xi}). 
\end{align*}
Then, for any $\hat{h} \in H$,
\begin{multline*}
    \la h - \hat{h}, \eta \tilde{\xi} \ra \leq 
    \frac{1}{2}\|\hat{h}-\tilde{h}\|_{\mathcal{H}}^2 
    - \frac{1}{2}\|\hat{h}-\tilde{h}_+\|_{\mathcal{H}}^2 
    \\
    + \frac{\eta^2}{2}\|\tilde{\xi}-\xi\|_{\mathcal{H}}^2 
    - \frac{1}{2}\|h-\tilde{h}\|_{\mathcal{H}}^2.
\end{multline*}
\end{lemma}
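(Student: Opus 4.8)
The plan is to run the standard mirror-prox (extragradient) argument, but in the Hilbert-space setting where the Bregman divergence generated by $d_f(f)=\tfrac12\|f\|_{\mathcal H}^2$ is just $\tfrac12\|\cdot\|_{\mathcal H}^2$. The two tools are (i) the first-order optimality conditions of the two strongly convex projection problems defining $h$ and $\tilde h_+$, and (ii) the Hilbert-space three-point identity $\langle a-b,c-a\rangle=\tfrac12\|b-c\|_{\mathcal H}^2-\tfrac12\|a-b\|_{\mathcal H}^2-\tfrac12\|a-c\|_{\mathcal H}^2$, together with Young's inequality $\langle a,b\rangle\le\tfrac12\|a\|_{\mathcal H}^2+\tfrac12\|b\|_{\mathcal H}^2$.

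First I would record the variational inequalities. Since $h$ minimizes the strongly convex functional $\hat h\mapsto\langle\hat h,\eta\xi\rangle+\tfrac12\|\tilde h-\hat h\|_{\mathcal H}^2$ over the convex closed set $H$ (using $\mathcal H^*=\mathcal H$ so that the gradient lies in $\mathcal H$), its gradient $\eta\xi+h-\tilde h$ satisfies $\langle\eta\xi+h-\tilde h,\hat h-h\rangle\ge0$ for all $\hat h\in H$; likewise $\langle\eta\tilde\xi+\tilde h_+-\tilde h,\hat h-\tilde h_+\rangle\ge0$ for all $\hat h\in H$. I then split the quantity to be bounded as $\langle h-\hat h,\eta\tilde\xi\rangle=\langle h-\tilde h_+,\eta\tilde\xi\rangle+\langle\tilde h_+-\hat h,\eta\tilde\xi\rangle$ and estimate each piece. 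For the second piece, the optimality inequality for $\tilde h_+$ at the test point $\hat h$ gives $\langle\tilde h_+-\hat h,\eta\tilde\xi\rangle\le\langle\tilde h_+-\tilde h,\hat h-\tilde h_+\rangle$, and the three-point identity (with $a=\tilde h_+$, $b=\tilde h$, $c=\hat h$) rewrites the right-hand side as $\tfrac12\|\hat h-\tilde h\|_{\mathcal H}^2-\tfrac12\|\tilde h_+-\tilde h\|_{\mathcal H}^2-\tfrac12\|\hat h-\tilde h_+\|_{\mathcal H}^2$. For the first piece I would write $\eta\tilde\xi=\eta(\tilde\xi-\xi)+\eta\xi$: the optimality inequality for $h$ at the test point $\tilde h_+$ bounds $\langle h-\tilde h_+,\eta\xi\rangle\le\langle h-\tilde h,\tilde h_+-h\rangle$, which the three-point identity (with $a=h$, $b=\tilde h$, $c=\tilde h_+$) turns into $\tfrac12\|\tilde h-\tilde h_+\|_{\mathcal H}^2-\tfrac12\|h-\tilde h\|_{\mathcal H}^2-\tfrac12\|h-\tilde h_+\|_{\mathcal H}^2$, while Young's inequality gives $\langle h-\tilde h_+,\eta(\tilde\xi-\xi)\rangle\le\tfrac12\|h-\tilde h_+\|_{\mathcal H}^2+\tfrac{\eta^2}{2}\|\tilde\xi-\xi\|_{\mathcal H}^2$.

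Finally I would add the two estimates. Within the first piece, the term $+\tfrac12\|h-\tilde h_+\|_{\mathcal H}^2$ coming from Young's inequality cancels the $-\tfrac12\|h-\tilde h_+\|_{\mathcal H}^2$ coming from the three-point identity, so the first piece collapses to $\tfrac{\eta^2}{2}\|\tilde\xi-\xi\|_{\mathcal H}^2+\tfrac12\|\tilde h-\tilde h_+\|_{\mathcal H}^2-\tfrac12\|h-\tilde h\|_{\mathcal H}^2$. Summing with the bound for the second piece, the two $\tfrac12\|\tilde h-\tilde h_+\|_{\mathcal H}^2$ terms (one of each sign) cancel across the pieces, leaving exactly $\tfrac12\|\hat h-\tilde h\|_{\mathcal H}^2-\tfrac12\|\hat h-\tilde h_+\|_{\mathcal H}^2+\tfrac{\eta^2}{2}\|\tilde\xi-\xi\|_{\mathcal H}^2-\tfrac12\|h-\tilde h\|_{\mathcal H}^2$, which is the claimed inequality.

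I do not anticipate a genuine obstacle: the statement is a deterministic, purely algebraic lemma and the only delicate point is bookkeeping — choosing the $h-\tilde h_+$ / $\tilde h_+-\hat h$ splitting and the $\eta\tilde\xi=\eta(\tilde\xi-\xi)+\eta\xi$ decomposition so that the auxiliary terms $\|h-\tilde h_+\|_{\mathcal H}^2$ and $\|\tilde h-\tilde h_+\|_{\mathcal H}^2$ introduced along the way cancel rather than accumulate, and tracking the signs through the optimality inequalities and the three-point identity. (Convexity/closedness of $H$ is needed only to justify the two first-order optimality conditions; no smoothness or monotonicity of the directions $\xi,\tilde\xi$ is used.)
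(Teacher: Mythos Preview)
Your proof is correct and follows essentially the same route as the paper's own argument: the paper also splits $\langle h-\hat h,\eta\tilde\xi\rangle$ through the intermediate point $\tilde h_+$, invokes the first-order optimality conditions of the two projection problems at the test points $\hat h$ and $\tilde h_+$ respectively, rewrites the resulting inner products via the three-point (polarization) identity, and controls the cross term $\langle h-\tilde h_+,\eta(\tilde\xi-\xi)\rangle$ by Young's (Fenchel's) inequality so that the auxiliary $\|h-\tilde h_+\|_{\mathcal H}^2$ and $\|\tilde h-\tilde h_+\|_{\mathcal H}^2$ terms cancel. Your bookkeeping of the cancellations is in fact slightly more explicit than the paper's.
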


The second result characterizes the mirror step with respect to the measure $\mu$. 

\begin{lemma}[{\citep[Lemma 5]{hsieh2019finding}}]
\label{Lm:measure_step}
Let $\tilde{\mu} \in \mathcal M$ and $\xi, \tilde{\xi} \in  \mathcal{M}^*$, $\eta >0$, and 
\begin{align}
    \mu= \MD^{\mu}_{\eta}(\tilde{\mu},  \xi),  \qquad
    \tilde{\mu}_+= \MD^{\mu}_{\eta}(\tilde{\mu},  \tilde{\xi}).
\end{align}
Then, for any $\hat{\mu} \in \mathcal M$
\begin{multline}
    \la \mu - \hat{\mu}, \eta \tilde{\xi} \ra \leq 
    D_{\Phi}(\hat{\mu},\tilde{\mu}) 
    - D_{\Phi}(\hat{\mu},\tilde{\mu}_+)
    \\
    + \frac{\eta^2}{8}\|\tilde{\xi}-\xi\|_{L^{\infty}}^2 
    - 2\|\mu-\tilde{\mu}\|_{TV}^2.
\end{multline}
\end{lemma}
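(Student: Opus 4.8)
This statement is exactly \citet[Lemma 5]{hsieh2019finding}, so the plan is to reproduce that argument, which is the entropic-geometry instance of the standard ``prox inequality'' underlying mirror prox (it is the measure-space analogue of Lemma~\ref{Lm:rkhs_step}). I would start from the exact first-order optimality characterization of the two mirror steps. From the closed form $d\mu = \frac{e^{-\eta \xi}d\tilde{\mu}}{\int e^{-\eta \xi}d\tilde{\mu}}$ in \eqref{eq-mirror-mu} (and the analogous one for $\tilde{\mu}_+$ with $\tilde{\xi}$) one reads off $\eta\xi + d\Phi(\mu) - d\Phi(\tilde{\mu}) = c_\xi$ and $\eta\tilde{\xi} + d\Phi(\tilde{\mu}_+) - d\Phi(\tilde{\mu}) = c_{\tilde{\xi}}$, where $c_\xi, c_{\tilde{\xi}}$ are scalar constants (the log-normalizers) and $d\Phi(\cdot)$ denotes the first variation $\ln\frac{d\,\cdot}{dx}$. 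Since any two elements of $\mathcal{M}$ have the same total mass, pairing against a difference $\nu - \nu'$ with $\nu,\nu'\in\mathcal{M}$ annihilates these constants, giving $\la \nu - \nu', \eta\xi \ra = \la \nu - \nu', d\Phi(\tilde{\mu}) - d\Phi(\mu) \ra$ and $\la \nu - \nu', \eta\tilde{\xi} \ra = \la \nu - \nu', d\Phi(\tilde{\mu}) - d\Phi(\tilde{\mu}_+) \ra$.

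Next I would decompose $\la \mu - \hat{\mu}, \eta\tilde{\xi} \ra = \la \tilde{\mu}_+ - \hat{\mu}, \eta\tilde{\xi} \ra + \la \mu - \tilde{\mu}_+, \eta\xi \ra + \la \mu - \tilde{\mu}_+, \eta(\tilde{\xi} - \xi) \ra$. To the first two terms I apply the optimality identities above to replace the pairings with $\tilde{\xi}, \xi$ by pairings of first variations, and then invoke the three-point identity $\la d\Phi(y) - d\Phi(z), x - y \ra = D_\Phi(x,z) - D_\Phi(x,y) - D_\Phi(y,z)$. This turns the first term into $D_\Phi(\hat{\mu},\tilde{\mu}) - D_\Phi(\hat{\mu},\tilde{\mu}_+) - D_\Phi(\tilde{\mu}_+,\tilde{\mu})$ and the second into $D_\Phi(\tilde{\mu}_+,\tilde{\mu}) - D_\Phi(\tilde{\mu}_+,\mu) - D_\Phi(\mu,\tilde{\mu})$; the $D_\Phi(\tilde{\mu}_+,\tilde{\mu})$ terms cancel, leaving $\la \mu - \hat{\mu}, \eta\tilde{\xi} \ra = D_\Phi(\hat{\mu},\tilde{\mu}) - D_\Phi(\hat{\mu},\tilde{\mu}_+) - D_\Phi(\tilde{\mu}_+,\mu) - D_\Phi(\mu,\tilde{\mu}) + \la \mu - \tilde{\mu}_+, \eta(\tilde{\xi} - \xi) \ra$.

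Finally I would estimate the residual term by the duality of $\|\cdot\|_{TV}$ and $\|\cdot\|_{L^\infty}$, namely $\la \mu - \tilde{\mu}_+, \eta(\tilde{\xi} - \xi) \ra \leq \eta\|\tilde{\xi} - \xi\|_{L^\infty}\|\mu - \tilde{\mu}_+\|_{TV}$, and then use Young's inequality with the Young parameter matched to the strong convexity of $\Phi$ in TV norm (Pinsker-type bound $D_\Phi(\nu,\nu') \geq c_P\|\nu - \nu'\|_{TV}^2$) so that the resulting $\|\mu - \tilde{\mu}_+\|_{TV}^2$ is absorbed into $-D_\Phi(\tilde{\mu}_+,\mu)$, producing the $\tfrac{\eta^2}{8}\|\tilde{\xi} - \xi\|_{L^\infty}^2$ term; a further application of the same bound to $-D_\Phi(\mu,\tilde{\mu})$ yields the $-2\|\mu - \tilde{\mu}\|_{TV}^2$ term, while $-D_\Phi(\hat{\mu},\tilde{\mu}_+)$ is kept as is. The two points where I would be most careful are (i) justifying the first-order optimality condition rigorously in the infinite-dimensional measure setting --- in particular that the multiplier for the mass constraint is precisely the scalar log-normalizer (hence drops out against differences of probability measures) and that all relative entropies appearing are finite, which uses compactness of $\bar{\Omega}$ together with positivity and boundedness of the densities of $\mu$ and $\tilde{\mu}_+$ --- and (ii) the bookkeeping of the Pinsker and Young constants so that they combine to exactly $\tfrac{\eta^2}{8}$ and $2$; the rest is a mechanical chain of the three-point identity and Young's inequality.
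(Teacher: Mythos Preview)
The paper does not give its own proof of this lemma: it is quoted verbatim from \citet[Lemma~5]{hsieh2019finding} and used as a black box in the analysis of Algorithm~\ref{alg:mirror_prox_gen}. Your proposal is precisely the standard entropic mirror-prox argument (exact optimality identity for the multiplicative update, three-point identity for $D_\Phi$, TV--$L^\infty$ duality plus Young, then Pinsker), and the derivation you wrote down is correct line by line; this is also how the cited reference proves it, so there is nothing to compare against within the present paper.

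One remark on your caveat (ii): with the TV norm as defined here, $\|\mu-\nu\|_{TV}=\sup_{\|\xi\|_{L^\infty}\le 1}\int \xi\, d(\mu-\nu)$, the standard Pinsker bound reads $D_\Phi(\nu,\nu')\ge \tfrac12\|\nu-\nu'\|_{TV}^2$, which on its own would give $\tfrac{\eta^2}{2}$ and $-\tfrac12$ rather than $\tfrac{\eta^2}{8}$ and $-2$. The constants in the statement match the convention in \citet{hsieh2019finding} (effectively $D_\Phi\ge 2\|\cdot\|_{TV}^2$), so when you ``do the bookkeeping'' you should track which TV normalization is in force; the discrepancy is purely conventional and has no effect on how the lemma is used downstream in Theorems~\ref{Th:ideal_MP_determ} and~\ref{Th:ideal_MP_stoch}, since only the $O(\eta^2)$ scaling matters there.
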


The following result gives the convergence rate of Algorithm \ref{alg:mirror_prox_gen}.
\begin{theorem} 
\label{Th:ideal_MP_determ}
Let Assumptions \ref{Asm:SPP_gen_convexity}, \ref{Asm:SPP_gen_smooth} hold. Let also the step sizes in Algorithm \ref{alg:mirror_prox_gen} satisfy $ \eta_f=\eta_\mu = \frac{1}{16L}$, where $L$ is defined in \eqref{eq:L_general}. Then, for any compact set $U =   U_f \times U_\mu   \subseteq  H \times \mathcal{M} $, the sequence $( \bar{f}_N,\bar{\mu}_N)$ generated by Algorithm \ref{alg:mirror_prox_gen} satisfies
\begin{multline*}
\max_{\mu \in U_\mu  } F(\bar{f}_N,\mu) - \min_{f \in U_f}F( f,\bar{\mu}_N ) 
\\
\leq \frac{8L}{N}\max_{u \in U} \left(  \|f-\tilde{f}_0\|_{\rkhs}^2 + 2 D_{\Phi}(\mu,\tilde{\mu}_0)   \right).
\end{multline*} 
\end{theorem}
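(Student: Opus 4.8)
# Proof Proposal for Theorem \ref{Th:ideal_MP_determ}

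The plan is to run the standard Mirror Prox convergence argument, but carried out separately in the two proximal geometries (RKHS for $f$, entropic for $\mu$) using Lemmas~\ref{Lm:rkhs_step} and \ref{Lm:measure_step} as the per-step building blocks, and then to glue the two one-step inequalities together and telescope. First I would fix an arbitrary $\hat u = (\hat f, \hat \mu) \in U = U_f \times U_\mu$ and apply Lemma~\ref{Lm:rkhs_step} with $\tilde h = \tilde f_k$, $\xi = F'_f(\tilde u_k)$, $\tilde\xi = F'_f(u_k)$, $h = f_k$, $\tilde h_+ = \tilde f_{k+1}$, and simultaneously Lemma~\ref{Lm:measure_step} with $\tilde\mu = \tilde\mu_k$, $\xi = -F'_\mu(\tilde u_k)$, $\tilde\xi = -F'_\mu(u_k)$, $\mu = \mu_k$, $\tilde\mu_+ = \tilde\mu_{k+1}$. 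Adding the two, with the common step size $\eta := \eta_f = \eta_\mu = \frac{1}{16L}$, yields a bound on $\langle f_k - \hat f, \eta F'_f(u_k)\rangle - \langle \mu_k - \hat\mu, \eta F'_\mu(u_k)\rangle$ by a telescoping difference of Bregman-type distances (the RKHS squared distance for $f$ and $D_\Phi$ for $\mu$, at indices $k$ and $k+1$), plus the error terms $\frac{\eta^2}{2}\|F'_f(u_k) - F'_f(\tilde u_k)\|_{\rkhs}^2 + \frac{\eta^2}{8}\|F'_\mu(u_k) - F'_\mu(\tilde u_k)\|_{L^\infty}^2$, minus the negative ``stability'' terms $\frac12\|f_k - \tilde f_k\|_{\rkhs}^2 + 2\|\mu_k - \tilde\mu_k\|_{TV}^2$.

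Next I would control the error terms using the Lipschitz Assumption~\ref{Asm:SPP_gen_smooth}: $\|F'_f(u_k) - F'_f(\tilde u_k)\|_{\rkhs} \le L_{ff}\|f_k - \tilde f_k\|_{\rkhs} + L_{f\mu}\|\mu_k - \tilde\mu_k\|_{TV} \le L(\|f_k - \tilde f_k\|_{\rkhs} + \|\mu_k - \tilde\mu_k\|_{TV})$, and similarly for $F'_\mu$. Squaring and using $(a+b)^2 \le 2a^2 + 2b^2$, the total error term is bounded by something like $2\eta^2 L^2 (\|f_k - \tilde f_k\|_{\rkhs}^2 + \|\mu_k - \tilde\mu_k\|_{TV}^2)$ up to the harmless $\frac12,\frac18$ constants. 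With $\eta = \frac{1}{16L}$ we get $2\eta^2 L^2 = \frac{1}{128}$, which is comfortably smaller than the coefficients $\frac12$ and $2$ on the negative stability terms, so the error terms are absorbed and what remains is the clean inequality
\[
\langle f_k - \hat f, \eta F'_f(u_k)\rangle - \langle \mu_k - \hat\mu, \eta F'_\mu(u_k)\rangle \le \left(\tfrac12\|\hat f - \tilde f_k\|_{\rkhs}^2 - \tfrac12\|\hat f - \tilde f_{k+1}\|_{\rkhs}^2\right) + \left(D_\Phi(\hat\mu, \tilde\mu_k) - D_\Phi(\hat\mu, \tilde\mu_{k+1})\right).
\]
Summing over $k = 0,\dots,N-1$ telescopes the right-hand side to $\frac12\|\hat f - \tilde f_0\|_{\rkhs}^2 + D_\Phi(\hat\mu, \tilde\mu_0)$ (dropping the nonnegative terms at index $N$).

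Then I would convert the gradient inner products into a function-value gap using convexity-concavity (Assumption~\ref{Asm:SPP_gen_convexity}): convexity of $F(\cdot,\mu_k)$ in $f$ gives $F(f_k,\mu_k) - F(\hat f,\mu_k) \le \langle F'_f(u_k), f_k - \hat f\rangle$, and concavity of $F(f_k,\cdot)$ gives $F(f_k,\hat\mu) - F(f_k,\mu_k) \le \langle F'_\mu(u_k), \hat\mu - \mu_k\rangle = -\langle F'_\mu(u_k), \mu_k - \hat\mu\rangle$; adding, $F(f_k,\hat\mu) - F(\hat f,\mu_k) \le \langle F'_f(u_k), f_k - \hat f\rangle - \langle F'_\mu(u_k), \mu_k - \hat\mu\rangle$, which is exactly (up to the factor $\eta$) the left side summed above. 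Dividing the telescoped sum by $\eta N = \frac{N}{16L}$ and using Jensen's inequality on the averaged iterate $\bar u_N = \frac1N\sum u_k$ — $F(\bar f_N, \hat\mu) \le \frac1N\sum F(f_k,\hat\mu)$ by concavity in $\mu$ and $F(\hat f, \bar\mu_N) \ge \frac1N\sum F(\hat f, \mu_k)$ by convexity in $f$ — gives $F(\bar f_N, \hat\mu) - F(\hat f, \bar\mu_N) \le \frac{16L}{N}\left(\frac12\|\hat f - \tilde f_0\|_{\rkhs}^2 + D_\Phi(\hat\mu, \tilde\mu_0)\right) = \frac{8L}{N}\left(\|\hat f - \tilde f_0\|_{\rkhs}^2 + 2D_\Phi(\hat\mu, \tilde\mu_0)\right)$. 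Taking the max over $\mu \in U_\mu$ on the left and the supremum over $\hat u \in U$ on the right (valid since the bound holds for every fixed $\hat u$) yields the claim.

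The main obstacle I anticipate is purely bookkeeping in the infinite-dimensional setting: making sure the inner products $\langle \cdot,\cdot\rangle$ pair the right spaces (RKHS self-duality for the $f$-part via $\mathcal H^* = \mathcal H$, and the $L^\infty$–$TV$ / bounded-function–measure duality for the $\mu$-part), that $F'_\mu(u_k) \in \mathcal M^*$ so Lemma~\ref{Lm:measure_step} applies, and that the compactness of $U$ together with the density/positivity restriction on $\mathcal M$ ensures $D_\Phi(\mu, \tilde\mu_0)$ is finite so the max on the right-hand side is attained and finite. The arithmetic of checking that $\frac{1}{128} \le \frac12$ and $\frac{1}{128}\cdot\frac14 \le 2$ (i.e. that the chosen step size makes the error terms absorbable) is routine; the only real care is tracking the asymmetric constants $\frac12$ vs.\ $2$ and $\frac{\eta^2}{2}$ vs.\ $\frac{\eta^2}{8}$ coming from the two different Bregman setups so that no sign is lost.
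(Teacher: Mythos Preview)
Your proposal is correct and follows essentially the same route as the paper's own proof: apply the two one-step mirror lemmas (Lemma~\ref{Lm:rkhs_step} for $f$, Lemma~\ref{Lm:measure_step} for $\mu$), use Assumption~\ref{Asm:SPP_gen_smooth} together with $(a+b)^2\le 2a^2+2b^2$ to absorb the Lipschitz error terms into the negative stability terms thanks to the step-size choice $\eta=1/(16L)$, telescope, and finish with convexity--concavity plus Jensen on the averages. One small slip: in your Jensen step you have the justifications swapped --- $F(\bar f_N,\hat\mu)\le \frac1N\sum_k F(f_k,\hat\mu)$ holds by \emph{convexity in $f$}, and $F(\hat f,\bar\mu_N)\ge \frac1N\sum_k F(\hat f,\mu_k)$ by \emph{concavity in $\mu$}; the inequalities themselves are written correctly.
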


\begin{remark}
    [Implementation of mirror descent steps]
Algorithm~\ref{alg:mirror_prox_gen} requires two mirror steps in the functional variables $\mu, f$, both have already been separately studied in the machine learning literature.
The $f$-update, optimization w.r.t. RKHS functions, has already been implemented for various learning tasks, e.g., by \cite{dai2014scalable,genevay_stochastic_2016,tiapkin_stochastic_2021,zhu_kernel_2021,pmlr-v202-kremer23a}.
For example,
step~\eqref{eq:md-step-rkhs}
can be implemented as
solving convex quadratic program
when using the RKHS function parameterization in the form $\tilde f =\sum_{j}\alpha_j k(x_j, \cdot)$.
See those references for more implementation details.
For the $\mu$-update,
many algorithms have been proposed including many particle-update schemes such as \cite{mei_mean_2018,wang_convergence_2021,chizat2018global,dai2016provable,hsieh2019finding,kimSymmetricMeanfieldLangevin2024}, as well as natural gradient descent~\cite{amariNaturalGradientWorks1998,khan2018fast}.
An orthogonal direction is to incorporate, e.g., based on inexact Mirror Prox schemes \cite{stonyakin2021inexact,stonyakin2022generalized}, an additional error estimates incurred by adopting one of those specific implementation schemes, e.g., particle mirror descent, which is beyond the scope of this paper and left for future work. Our current scope is also similar to another recent work on infinite-dimensional mirror descent analysis by \cite{aubin2022mirror}.
\label{rmk:particle-md}
\end{remark}

\subsection{Analysis of Stochastic Kernel Mirror Prox}

To account for potential inexactness in the first-order information, we assume that, instead of exact derivatives, the algorithm uses their inexact counterparts $\tilde{F}'_f(u),\tilde{F}'_\mu(u)$ that may be random and are assumed to satisfy the following assumption.
\begin{assumption}
\label{Asm:SPP_gen_stoch}
\begin{align*}
&F'_f(u)=\E\tilde{F}'_f(u), \quad F'_\mu(u)=\E\tilde{F}'_\mu(u), \\
&\E\|F'_f(u)-\tilde{F}'_f(u)\|_{\mathcal{H}}^2 \leq \sigma^2_f, 
\\
&
\E\|F'_\mu(u)-\tilde{F}'_\mu(u)\|_{L^{\infty}}^2 \leq \sigma^2_\mu.
\end{align*}
\end{assumption}

\begin{theorem}
\label{Th:ideal_MP_stoch}
Let Assumptions \ref{Asm:SPP_gen_convexity}, \ref{Asm:SPP_gen_smooth},  \ref{Asm:SPP_gen_stoch} hold. Let also in Algorithm \ref{alg:mirror_prox_gen} the stochastic derivatives be used instead of the deterministic and the step sizes satisfy $\eta_f=\eta_\mu=\frac{1}{16L}$, where $L$ is defined in \eqref{eq:L_general}. Then, for any compact set $U =  U_f \times U_\mu  \subseteq  H \times \mathcal{M} $, the sequence $( \bar{f}_N,\bar{\mu}_N )$ generated by Algorithm \ref{alg:mirror_prox_gen} satisfies 
\begin{multline*}
\E \left\{\max_{\mu \in U_\mu  } F(\bar{f}_N,\mu) - \min_{f \in U_f}F( f,\bar{\mu}_N ) \right\}
\\
\leq \frac{8L}{N}\max_{u \in U} \left(\|f-\tilde{f}_0\|_{\mathcal{H}}^2 + 2D_{\Phi}(\mu,\tilde{\mu}_0) \right) + \frac{3(\sigma^2_f+\sigma^2_\mu)}{16L} .
\end{multline*} 
\end{theorem}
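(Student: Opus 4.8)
The plan is to replay the proof of Theorem~\ref{Th:ideal_MP_determ} line by line, tracking the two error sources produced when the exact maps $F'_f(u),F'_\mu(u)$ are replaced by the unbiased surrogates $\tilde F'_f(u),\tilde F'_\mu(u)$ of Assumption~\ref{Asm:SPP_gen_stoch}: extra noise inside the squared gradient-difference terms of Lemmas~\ref{Lm:rkhs_step} and~\ref{Lm:measure_step}, and a new term $\eta\la F'_f(u_k)-\tilde F'_f(u_k),\,f_k-\hat f\ra-\eta\la F'_\mu(u_k)-\tilde F'_\mu(u_k),\,\mu_k-\hat\mu\ra$ against the comparator $\hat u=(\hat f,\hat\mu)$; I write $\Delta_k$ for the (sign-adjusted) noise pair, so this term equals $\eta\la\Delta_k,\,u_k-\hat u\ra$, with $\Delta_k$ conditionally zero-mean and second moment at most $\sigma_f^2+\sigma_\mu^2$.

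First I fix $k$ and $\hat u\in H\times\mathcal M$, apply Lemma~\ref{Lm:rkhs_step} (with $\tilde h=\tilde f_k$, $\xi=\tilde F'_f(\tilde u_k)$, $\tilde\xi=\tilde F'_f(u_k)$, $h=f_k$, $\tilde h_+=\tilde f_{k+1}$) and Lemma~\ref{Lm:measure_step} (with $\xi=-\tilde F'_\mu(\tilde u_k)$, $\tilde\xi=-\tilde F'_\mu(u_k)$), add them, and invoke Assumption~\ref{Asm:SPP_gen_convexity} exactly as in the deterministic proof to lower-bound $\la\tilde F'_f(u_k),f_k-\hat f\ra-\la\tilde F'_\mu(u_k),\mu_k-\hat\mu\ra$ by $F(f_k,\hat\mu)-F(\hat f,\mu_k)$ minus the correction $\la\Delta_k,u_k-\hat u\ra$. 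The resulting one-step inequality has a right side that telescopes in $\tfrac12\|\hat f-\tilde f_k\|_{\rkhs}^2+D_\Phi(\hat\mu,\tilde\mu_k)$, carries the negative quantities $-\tfrac12\|f_k-\tilde f_k\|_{\rkhs}^2-2\|\mu_k-\tilde\mu_k\|_{TV}^2$, the $\eta^2$-weighted norms $\|\tilde F'_f(u_k)-\tilde F'_f(\tilde u_k)\|_{\rkhs}^2$ and $\|\tilde F'_\mu(u_k)-\tilde F'_\mu(\tilde u_k)\|_{L^\infty}^2$, and the term $\eta\la\Delta_k,u_k-\hat u\ra$. Next I bound the gradient-difference norms by splitting each $\tilde F'_\bullet(u_k)-\tilde F'_\bullet(\tilde u_k)$ into $(\tilde F'_\bullet(u_k)-F'_\bullet(u_k))+(F'_\bullet(u_k)-F'_\bullet(\tilde u_k))+(F'_\bullet(\tilde u_k)-\tilde F'_\bullet(\tilde u_k))$ and using $\|a+b+c\|^2\le 3(\|a\|^2+\|b\|^2+\|c\|^2)$. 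The deterministic middle piece is at most $O(L^2)(\|f_k-\tilde f_k\|_{\rkhs}^2+\|\mu_k-\tilde\mu_k\|_{TV}^2)$ by Assumption~\ref{Asm:SPP_gen_smooth}, so — exactly as in the deterministic analysis — the choice $\eta=\tfrac1{16L}$ makes the $\eta^2 O(L^2)$ contributions dominated by $-\tfrac12\|f_k-\tilde f_k\|_{\rkhs}^2-2\|\mu_k-\tilde\mu_k\|_{TV}^2$ and drops them; the two remaining pieces are pure noise, with conditional second moments bounded by $\sigma_f^2$ and $\sigma_\mu^2$, and summed over $k$, divided by $N\eta$, and evaluated at $\eta=\tfrac1{16L}$ they account for the additive $\tfrac{3(\sigma_f^2+\sigma_\mu^2)}{16L}$.

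The delicate step, and the one I expect to be the main obstacle, is the term $\eta\la\Delta_k,u_k-\hat u\ra$: the comparator realizing $\max_{\mu\in U_\mu}F(\bar{f}_N,\mu)-\min_{f\in U_f}F(f,\bar{\mu}_N)$ depends on the entire sample path, so conditional expectations cannot be taken inside the maximum. I handle this with a standard online-learning (ghost-iterate) argument: run an auxiliary mirror-descent sequence $\{w_k\}\subset H\times\mathcal M$ on the same proximal setup, started at $\tilde u_0$ and driven by $-\Delta_k$, and split $\sum_k\la\Delta_k,u_k-\hat u\ra=\sum_k\la\Delta_k,u_k-w_k\ra+\sum_k\la\Delta_k,w_k-\hat u\ra$. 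Conditioned on the history up to the fresh sample at $u_k$, $u_k-w_k$ is measurable and $\E\Delta_k=0$, so the first sum is a zero-mean martingale; the second is bounded \emph{uniformly over $\hat u\in U$} by the Bregman regret of $\{w_k\}$, whose expectation is of the order already accounted for. Finally I sum the one-step inequalities over $k=0,\dots,N-1$, divide by $N$, use Jensen's inequality together with Assumption~\ref{Asm:SPP_gen_convexity} to replace $\tfrac1N\sum_k F(f_k,\mu)$ and $\tfrac1N\sum_k F(f,\mu_k)$ by $F(\bar{f}_N,\mu)$ and $F(f,\bar{\mu}_N)$, take expectation, and maximize over $U$, collecting constants to reach the claimed bound. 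Well-definedness of all iterates and finiteness of the relative-entropy terms are guaranteed by the same density/positivity restrictions on $\mathcal M$ used in the deterministic case.
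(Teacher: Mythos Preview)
Your proposal tracks the paper's proof in its core structure: apply Lemmas~\ref{Lm:rkhs_step} and~\ref{Lm:measure_step} with the stochastic gradients in place of the exact ones, split each $\tilde F'_\bullet(u_k)-\tilde F'_\bullet(\tilde u_k)$ into three pieces via $\|a+b+c\|^2\le 3(\|a\|^2+\|b\|^2+\|c\|^2)$, use Assumption~\ref{Asm:SPP_gen_smooth} on the deterministic middle piece together with the step-size choice $\eta=1/(16L)$ to absorb it into the negative distance terms, and collect the remaining pure-noise pieces into the additive $O(\sigma^2/L)$ term. The one substantive divergence is your treatment of the inner-product noise $\eta\la\Delta_k,u_k-\hat u\ra$. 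The paper simply takes expectation for each \emph{fixed} comparator $\hat u$ (so that $u_k$ is measurable with respect to the filtration preceding the fresh sample at $u_k$ and the term vanishes in expectation) and then ``repeats the same steps'' of the deterministic proof; read literally, that argument delivers only $\max_{u\in U}\E[\cdot]\le\text{RHS}$ rather than the stated $\E[\max_{u\in U}\cdot]\le\text{RHS}$. Your ghost-iterate device is the standard Nemirovski--Juditsky--Lan--Shapiro remedy that genuinely closes this gap and is thus a more rigorous route. The cost is that the ghost regret contributes an extra Bregman-diameter term and an extra $\eta\sigma^2$ term, so the constants you will actually obtain are somewhat larger than the $8L/N$ and $3/(16L)$ appearing in the statement; your phrase ``of the order already accounted for'' is correct in rate but not in the precise constants.
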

Let us denote $\sigma^2= \sigma^2_f+\sigma^2_\mu $. 
Theorem \ref{Th:ideal_MP_stoch} guarantees the same convergence rate as in the exact case, but up to some vicinity which is governed by the level of noise. In most cases, the $\sigma^2/L$ term can be made of the same order $1/N$ by using the mini-batching technique. Indeed, a mini-batch of size $N$ allows us to reduce the variance from $\sigma^2$ to $\sigma^2/N$. Yet, we note that in this case, $N$ iterations will require the number of samples $O(N^2)$. This corresponds to iteration complexity $O(1/\varepsilon)$ and sample complexity $O(1/\varepsilon^2)$ to reach an accuracy $\varepsilon$.

An alternative would be to use the information about the diameter of the set $U$. Indeed, assume that 
\[
\max_{u \in U} \left(  \|f-\tilde{f}_0\|_{\mathcal{H}}^2 + 2D_{\Phi}(\mu,\tilde{\mu}_0)  \right) \leq \Omega_{U}^2.
\]
Fixing the number of steps $N$ and choosing
$
 \eta_f=\eta_\mu= \min\left\{ \frac{1}{16L}, \frac{\Omega_{U} \sigma }{ \sqrt{6N}} \right\},
$
we obtain the following result
\begin{multline}
 \E \left\{\max_{\mu \in U_\mu  } F(\bar{f}_N,\mu) - \min_{f \in U_f}F( f,\bar{\mu}_N ) \right\} 
\\
\leq \max \left \{\frac{8L\Omega_{U}^2}{N}, \sqrt{\frac{3\sigma^2 \Omega_{U}^2}{2N}}\right\}.
\end{multline}

\section{DRO Algorithmic Guarantees using Kernel Mirror Prox}
\label{sec:dro}
In this section we particularize the elements of KMP in Algorithm \ref{alg:mirror_prox_gen} for the specific DRO problem \eqref{eq-dro-ipm} equipped with the kernel MMD constraint using its primal-dual formulation \eqref{eq:kdro_smooth}.
In contrast with many reformulation techniques using simple linear duality formulation, we propose a principled  \emph{primal-dual convergence analysis} for DRO using our MKNE framework.
Notably, we prove the finite-sample robustness \emph{algorithmic guarantee} for the DRO solution generated  by the KMP algorithm after $N$ iterations under non-linear DRO losses.

Compared to problem \eqref{eq:SPP_general}, the DRO formulation~\eqref{eq:kdro_smooth} has two additional variables $\theta \in \mathbb{R}^d$ and $h \in H \subset \rkhs$. For these variables, the proximal setup is introduced in the same way as for the variable $f$. We choose $\rkhs$ to be a reproducing kernel Hilbert space with kernel $k$.
Our main assumptions for problem \eqref{eq:kdro_smooth} are 
    \textbf{1).} $L_0=\sup_{x,\theta}\|\nabla_{\theta} l(\theta;x)\|_2 < +\infty$. 
    \textbf{2).} $\nabla_{\theta} l(\theta;x)$ is $L(x)$-Lipschitz w.r.t. $\theta$ and $L_1=\sup_{\mu}\E_{x \sim \mu} L(x)^2 < +\infty$.
    \textbf{3).} $C=\sup _{x} k(x, x)<+\infty$. 
    \textbf{4).} For any fixed $x$, $l(\theta;x)$ is convex w.r.t. $\theta$ (but not necessarily w.r.t. $x$).
\begin{remark}
Most importantly, we do not place restrictive assumptions on the loss $l$ w.r.t. variable $x$, which is a common practice in the DRO literature. That is, we do not assume that $l(\theta; x)$ is convex, concave, affine, or quadratic in $x$. This is a significant improvement over the previous DRO analysis, see~\cite{netessine_wasserstein_2019}. 
We also note that the convexity assumption w.r.t. $\theta$ is only used to obtain the global convergence guarantee. When it does not hold in practice, we can still execute our primal-dual KMP algorithm for DRO. 
This is not possible with other existing Wasserstein or kernel DRO algorithms.
\end{remark}
We provide more details regarding the set-up of the KMP algorithm for the DRO setting in the appendix.
We now give the main result of this section that upper-bounds the solution (sub-)optimality of the KMP algorithm.
To the best of our knowledge, this is the first general non-asymptotic algorithmic guarantees without strong assumptions on $l(\theta; x)$ in terms of its dependence on $x$.
As a reminder, $\emp$ is the empirical data distribution, whose samples are generated from the (unknown) distribution $\mu_0$.
\begin{proposition}
[DRO Guarantee for KMP decision sub-optimality]
Suppose $\THKMP=\frac1N\sum_{k=0}^{N-1}\theta_k$ is the averaged solution produced by the kernel mirror prox algorithm after $N$ steps.
Then, $\forall \epsilon>0$,
the DRO risk associated with the decision $\THKMP$
is bounded by
\begin{multline}
    \sup_{\mmd(\mu, \emp) \le \epsilon} \E_\mu l(\THKMP; x) -
    \\
     \underbrace{\inf_{\theta\in\Theta}\sup_{\mmd(\mu, \emp) \le \epsilon} \E_\mu l(\theta; x)}_{\textrm{(Optimal DRO risk)}}
     \leq {O}\left(\frac{1}{N}\right).
\end{multline}
\label{thm:dro-subopt-guarantee}
\end{proposition}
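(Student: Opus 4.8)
The plan is to combine the $O(1/N)$ primal--dual gap guarantee of Theorem~\ref{Th:ideal_MP_determ} (in the four-block form for the variables $(\theta,f,\mu,h)$ that is set up in the appendix) with the exact MMD reformulation of the DRO risk from Lemma~\ref{thm-dro-reform-lemma}, connected by a short chain of inequalities. Write $\Phi(\theta,f,\mu,h)$ for the objective of \eqref{eq:kdro_smooth} and $\bar u_N=(\bar\theta_N,\bar f_N,\bar\mu_N,\bar h_N)$ for the averaged KMP iterate, so that $\THKMP=\bar\theta_N$. Before the chain can run I would verify that \eqref{eq:kdro_smooth} fits the template \eqref{eq:SPP_general}: Assumption~\ref{Asm:SPP_gen_convexity} holds because $\Phi$ is affine in $f,\mu,h$ and convex in $\theta$ by assumption~4); Assumption~\ref{Asm:SPP_gen_smooth} holds with an explicit $L$ built from assumptions~1)--3), since $F'_\theta=\E_\mu\nabla_\theta l(\theta;x)$ is Lipschitz with constants governed by $L_0,L_1$, the affine maps $F'_f=\psi_{\emp}+\epsilon h-\psi_\mu$ and $F'_h=\epsilon f$ have norms controlled by $C=\sup_x k(x,x)$ and $\|h\|_{\rkhs}\le1$, and $F'_\mu=l(\theta;\cdot)-f(\cdot)\in L^\infty$ by boundedness of $l$ on $\bar\Omega$ together with $\|f\|_\infty\le\sqrt C\|f\|_{\rkhs}$. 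Then Theorem~\ref{Th:ideal_MP_determ} yields, for any compact comparison set $U$ in the four variables,
\[
\sup_{(\mu,h)\in U}\Phi(\bar\theta_N,\bar f_N,\mu,h)-\inf_{(\theta,f)\in U}\Phi(\theta,f,\bar\mu_N,\bar h_N)\le \frac{8L}{N}\,\Omega_U^2,
\]
where $\Omega_U^2$ bounds $\|\theta-\tilde\theta_0\|^2+\|f-\tilde f_0\|_{\rkhs}^2+\|h-\tilde h_0\|_{\rkhs}^2+2D_\Phi(\mu,\tilde\mu_0)$ over $U$.

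Next comes the conversion of this gap into decision sub-optimality. For the fixed decision $\THKMP=\bar\theta_N$, the MMD part of Lemma~\ref{thm-dro-reform-lemma} gives the identity $\sup_{\mmd(\mu,\emp)\le\epsilon}\E_\mu l(\THKMP;x)=\inf_{f\in\rkhs}\sup_{\mu\in\mathcal M,\,\|h\|_{\rkhs}\le1}\Phi(\THKMP,f,\mu,h)$, so substituting $f=\bar f_N$ can only increase the right-hand side and the DRO risk of $\THKMP$ is at most $\sup_{\mu,h}\Phi(\bar\theta_N,\bar f_N,\mu,h)$. Applying the gap bound and then using $\Phi(\theta,f,\bar\mu_N,\bar h_N)\le\sup_{\mu,h}\Phi(\theta,f,\mu,h)$ pointwise in $(\theta,f)$ gives $\sup_{\mu,h}\Phi(\bar\theta_N,\bar f_N,\mu,h)\le\inf_{\theta,f}\sup_{\mu,h}\Phi(\theta,f,\mu,h)+O(1/N)$. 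Finally, invoking Lemma~\ref{thm-dro-reform-lemma} once more, this time with the infimum over $\theta$ also taken, identifies $\inf_{\theta,f}\sup_{\mu,h}\Phi=\inf_{\theta\in\Theta}\sup_{\mmd(\mu,\emp)\le\epsilon}\E_\mu l(\theta;x)$, i.e.\ the optimal DRO risk; subtracting it from both sides delivers the claimed $O(1/N)$ bound.

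The main obstacle is precisely that the inner supremum over $\mu\in\mathcal M$ in \eqref{eq:kdro_smooth} is a supremum of a linear functional, so it is typically only approached by measures concentrating near a point, which lie outside the admissible class (continuous, a.e.\ positive densities) and for which $D_\Phi(\mu,\tilde\mu_0)\to+\infty$; hence the compact-$U$ version of Theorem~\ref{Th:ideal_MP_determ} cannot be applied with $U_\mu$ taken to be a maximizer, and the step ``$\sup_{\mu,h}\Phi(\bar\theta_N,\bar f_N,\mu,h)\le\dots+O(1/N)$'' needs justification. I would handle this either (i) structurally, noting that near optimality $l(\bar\theta_N;\cdot)-\bar f_N(\cdot)$ is close to a constant so the $\mu$-subproblem is nearly degenerate and its value is nearly attained on a bounded-entropy subset, or (ii) by an explicit truncation: choose $\mu_\delta\in\mathcal M$ supported near the maximizer with $\E_{\mu_\delta}(l(\bar\theta_N;\cdot)-\bar f_N)$ within $\delta$ of the supremum and $D_\Phi(\mu_\delta,\tilde\mu_0)=O(d\log(1/\delta))$, apply the bound on a compact $U$ containing $\mu_\delta$ together with a bounded optimal primal pair $(\theta^\star,f^\star)$ (bounded by assumptions~1)--3) and a large enough $H$), and optimize over $\delta$, which costs at most a logarithmic factor absorbed into the $O(1/N)$. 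A secondary technical point is to make the comparison set on the $(\theta,f,h)$ side genuinely bounded, which again uses $L_0$, $C$, and $\epsilon$ to bound an optimal dual function $f^\star$.
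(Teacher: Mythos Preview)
Your proposal follows exactly the paper's approach: sandwich the DRO sub-optimality between the primal--dual gap terms via Lemma~\ref{thm-dro-reform-lemma} (yielding $Z_{\epsilon,\emp}(\THKMP)\le\sup_{\mu,h}F(\bar\theta_N,\bar f_N,\mu,h)$ and $Z_{\epsilon,\emp}(\THDRO)\ge\inf_{\theta,f}F(\theta,f,\bar\mu_N,\bar h_N)$), then invoke the four-block version of Theorem~\ref{Th:ideal_MP_determ}; the paper's proof is precisely this three-line chain followed by ``Applying the statement of Theorem~\ref{Th:ideal_MP_determ_app} completes the proof,'' and your assumption-checking mirrors the appendix calculations leading to Lemma~\ref{thm:DRO_simple}. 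The compactness/entropy obstacle you flag---that the $\mu$-supremum is over all of $\mathcal M$ while Theorem~\ref{Th:ideal_MP_determ} is stated for a compact $U_\mu$ with bounded $D_\Phi(\mu,\tilde\mu_0)$---is real, but the paper does not address it either (it simply applies the theorem with the unrestricted supremum), so your truncation argument is in fact more careful than what is written there.
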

    \vspace{-0.6cm}
    The bound becomes $O(\frac1{\sqrt{N}})$ in expectation in the stochastic case, which we provide in the appendix.
The result above quantifies that the decision generated by the KMP algorithm after $N$ iterations is close to the 
ideal true DRO solution \THDRO in terms of the risk.
This distinguishes our analysis from the ideal DRO bounds that
do not incorporate optimization errors.
The proof of this result, given in the appendix, is due to our analysis of the KMP algorithm in the previous section.

Using this main result, we can derive further guarantees such as the ones below.
Let $\epsilon_n(\delta):= \sqrt{\frac{C}{n}} + \sqrt{\frac{2C\log(1/\delta)}{n}}$ for $C:=\sup_x|k(x,x)|$, which can be calculated for commonly-used kernels\eg $C=1$ for the Gaussian kernel.
Note that $n$ is the sample size instead of the number of iterations $N$.
\begin{corollary}
    [Robustness guarantee for population risk]
        \label{thm:rob-mu0}
    Suppose the ambiguity level $\epsilon$ is chosen such that $\epsilon > \epsilon_n(\delta)$.
        Then,
        with large probability,
        the population risk of the decision $\THKMP$ output by the KMP algorithm after $N$ steps is estimated from above by
    \begin{multline}
            \E_{\mu_0}[l(\THKMP; x)]
        -
        \inf_{\theta\in\Theta}\sup_{\mmd(\mu, \emp) \le \epsilon} \E_\mu l(\theta; x)
        \leq 
        O\left(\frac1N\right)
       .
    \end{multline}
\end{corollary}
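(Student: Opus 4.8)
The plan is to reduce the population-risk bound to the DRO-risk bound of Proposition \ref{thm:dro-subopt-guarantee} by controlling the gap between $\E_{\mu_0}[l(\THKMP;x)]$ and the worst-case risk $\sup_{\mmd(\mu,\emp)\le\epsilon}\E_\mu l(\THKMP;x)$. The key observation is that when the ambiguity radius $\epsilon$ exceeds the statistical fluctuation $\epsilon_n(\delta)$, the true data distribution $\mu_0$ itself lies inside the MMD ball around the empirical distribution $\emp$ with high probability, so the supremum dominates the population risk.

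First I would invoke a standard concentration inequality for the MMD between an empirical measure and its population counterpart: with probability at least $1-\delta$ over the $n$ i.i.d.\ samples drawn from $\mu_0$, one has $\mmd(\mu_0,\emp)\le \epsilon_n(\delta)$, where $\epsilon_n(\delta)=\sqrt{C/n}+\sqrt{2C\log(1/\delta)/n}$ and $C=\sup_x k(x,x)$. This is the classical bounded-difference / McDiarmid argument applied to the function $\emp\mapsto\mmd(\mu_0,\emp)$, whose expectation is bounded by $\sqrt{C/n}$ and which has bounded differences of order $\sqrt{C}/n$; I would cite the appropriate reference (e.g.\ the MMD concentration results behind \citep{gretton_kernel_2012,zhu_kernel_2021}) rather than reprove it. Conditioning on this event, since $\epsilon>\epsilon_n(\delta)$ we get $\mmd(\mu_0,\emp)\le\epsilon$, i.e.\ $\mu_0$ is feasible for the inner supremum.

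Next, feasibility of $\mu_0$ immediately yields the pointwise domination
\begin{equation*}
\E_{\mu_0}[l(\THKMP;x)] \le \sup_{\mmd(\mu,\emp)\le\epsilon}\E_\mu[l(\THKMP;x)].
\end{equation*}
Subtracting the optimal DRO risk $\inf_{\theta\in\Theta}\sup_{\mmd(\mu,\emp)\le\epsilon}\E_\mu l(\theta;x)$ from both sides and applying Proposition \ref{thm:dro-subopt-guarantee} to the right-hand side gives
\begin{equation*}
\E_{\mu_0}[l(\THKMP;x)] - \inf_{\theta\in\Theta}\sup_{\mmd(\mu,\emp)\le\epsilon}\E_\mu l(\theta;x) \le \sup_{\mmd(\mu,\emp)\le\epsilon}\E_\mu l(\THKMP;x) - \inf_{\theta\in\Theta}\sup_{\mmd(\mu,\emp)\le\epsilon}\E_\mu l(\theta;x) \le O\!\left(\tfrac1N\right),
\end{equation*}
which is the claimed bound, holding on the high-probability event just described. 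One caveat to handle carefully: Proposition \ref{thm:dro-subopt-guarantee} is a bound for a fixed data set, while here $\emp$ is random, so I would note that the $O(1/N)$ constant depends on quantities ($L_0,L_1,C$, the diameter of $U$) that are uniformly bounded under assumptions 1)--4), hence the optimization bound holds simultaneously on the concentration event without degrading the rate.

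The main obstacle is not any single step but making the two sources of randomness coexist cleanly: the optimization error in Proposition \ref{thm:dro-subopt-guarantee} and the statistical event $\{\mmd(\mu_0,\emp)\le\epsilon_n(\delta)\}$. I would address this by fixing $\delta$, working on the intersection event, and observing that the constants in the $O(1/N)$ term in Proposition \ref{thm:dro-subopt-guarantee} are deterministic upper bounds (via assumptions 1)--3) they do not depend on the realized $\emp$), so no union bound over data-dependent quantities is needed. The "with large probability" in the statement is then exactly the probability $1-\delta$ of the MMD concentration event.
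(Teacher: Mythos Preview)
Your proposal is correct and follows essentially the same argument as the paper: invoke the MMD concentration bound so that $\mu_0$ lies in the ambiguity ball with probability $1-\delta$, deduce $\E_{\mu_0}[l(\THKMP;x)]\le Z_{\epsilon,\emp}(\THKMP)$, and then apply Proposition~\ref{thm:dro-subopt-guarantee}. Your additional remarks about the constants in the $O(1/N)$ term being independent of the realized $\emp$ are more careful than the paper's version but do not change the route.
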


\begin{corollary}
    [Generalization guarantee under distribution shift]
    The risk under the worst-case distribution shift from the population distribution $\mu_0$, of any radius $\epsilon>0$,
    of the decision $\THKMP$ output by the KMP algorithm after $N$ steps
    is upper-bounded with large probability by
    \begin{multline}
        \sup_{\mmd(\mu, \mu_0) \le \epsilon}\E_\mu l(\THKMP; x)
        \\
        -
        \inf_{\theta\in\Theta}\sup_{\mmd(\mu, \emp) \le \epsilon} \E_\mu l(\theta; x)
        \leq 
        O\left(\max\left\{\frac{1}{ N}, \frac{1}{\sqrt n}\right\}\right)
       .
    \end{multline}
    \label{thm:rob-d-shift-sup}
\end{corollary}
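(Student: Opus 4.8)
The plan is to decompose the quantity $\sup_{\mmd(\mu,\mu_0)\le\epsilon}\E_\mu l(\THKMP;x)$ by comparing the ball around the population distribution $\mu_0$ with the ball around the empirical distribution $\emp$, and then to invoke Proposition~\ref{thm:dro-subopt-guarantee} to control the empirical-ball quantity. First I would use the triangle inequality for the MMD: with probability at least $1-\delta$ we have $\mmd(\mu_0,\emp)\le\epsilon_n(\delta)$, by the standard concentration bound for the kernel mean embedding (this is exactly the bound $\epsilon_n(\delta)=\sqrt{C/n}+\sqrt{2C\log(1/\delta)/n}$ stated just before the corollary). On this high-probability event, any $\mu$ with $\mmd(\mu,\mu_0)\le\epsilon$ satisfies $\mmd(\mu,\emp)\le\epsilon+\epsilon_n(\delta)$, hence
\begin{equation*}
\sup_{\mmd(\mu,\mu_0)\le\epsilon}\E_\mu l(\THKMP;x)\;\le\;\sup_{\mmd(\mu,\emp)\le\epsilon+\epsilon_n(\delta)}\E_\mu l(\THKMP;x).
\end{equation*}

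Next I would relate the robust risk at radius $\epsilon+\epsilon_n(\delta)$ to the one at radius $\epsilon$. Since the loss is bounded and Lipschitz in the sense controlled by the assumptions in Section~\ref{sec:dro} (in particular $L_0=\sup_{x,\theta}\|\nabla_\theta l(\theta;x)\|_2<\infty$ together with boundedness of $l$ on the compact domain $\bar\Omega$), enlarging the MMD radius by $\epsilon_n(\delta)$ changes the worst-case expected loss by at most $O(\epsilon_n(\delta))=O(1/\sqrt n)$; concretely one bounds $|\E_\mu l - \E_{\mu'} l|$ by a constant times $\mmd(\mu,\mu')$ using that $l(\theta;\cdot)$ lies (up to scaling) in the RKHS unit ball or can be uniformly approximated by such functions, which is where assumption~3, $C=\sup_x k(x,x)<\infty$, enters. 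This yields
\begin{equation*}
\sup_{\mmd(\mu,\emp)\le\epsilon+\epsilon_n(\delta)}\E_\mu l(\THKMP;x)\;\le\;\sup_{\mmd(\mu,\emp)\le\epsilon}\E_\mu l(\THKMP;x)+O\!\left(\tfrac{1}{\sqrt n}\right).
\end{equation*}

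Finally I would subtract the optimal empirical DRO risk $\inf_{\theta\in\Theta}\sup_{\mmd(\mu,\emp)\le\epsilon}\E_\mu l(\theta;x)$ from both sides and apply Proposition~\ref{thm:dro-subopt-guarantee}, which bounds $\sup_{\mmd(\mu,\emp)\le\epsilon}\E_\mu l(\THKMP;x)-\inf_{\theta\in\Theta}\sup_{\mmd(\mu,\emp)\le\epsilon}\E_\mu l(\theta;x)$ by $O(1/N)$. Combining the three displays gives the claimed $O(\max\{1/N,1/\sqrt n\})$ bound on the high-probability event. The main obstacle I anticipate is the second step: making precise the sense in which enlarging the MMD ball costs only $O(\epsilon_n(\delta))$ in worst-case loss. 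This needs a clean Lipschitz-type estimate $|\E_\mu l(\theta;\cdot)-\E_{\mu'}l(\theta;\cdot)|\le \kappa\,\mmd(\mu,\mu')$ uniform in $\theta$, which requires either assuming $l(\theta;\cdot)/\kappa$ is in the RKHS unit ball for all $\theta$ (a mild smoothness condition on the loss in $x$, not a convexity condition) or a separate approximation argument; I would state this as the operative regularity hypothesis and carry the constant $\kappa$ through. Everything else is bookkeeping: the MMD triangle inequality, the concentration bound defining $\epsilon_n(\delta)$, and the already-proven Proposition~\ref{thm:dro-subopt-guarantee}.
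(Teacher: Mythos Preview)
Your argument is sound under the extra hypothesis you flag, but it takes a genuinely different route from the paper's proof, and that extra hypothesis is exactly what the paper's method sidesteps. You work geometrically: the MMD triangle inequality nests the $\mu_0$-ball inside an enlarged $\emp$-ball, and then a Lipschitz bound $|\E_\mu l - \E_{\mu'} l| \le \kappa\,\mmd(\mu,\mu')$ brings the radius back down. That second step forces $l(\theta;\cdot)$ to lie in (a multiple of) the RKHS unit ball uniformly in $\theta$ --- a smoothness assumption on the loss in $x$, mild as you say, but still of the type the paper explicitly advertises \emph{not} needing (see the Remark in Section~\ref{sec:dro}). The paper instead passes through the dual reformulation~\eqref{eq-kdro-dual-ap-2}: writing $Z_{\epsilon,\nu}(\theta) = \sup_\mu \inf_{f\in\rkhs}\bigl\{\int f\,\dd\nu + \epsilon\|f\|_\rkhs + \int(l(\theta;\cdot)-f)\,\dd\mu\bigr\}$, one sees that the only place the center $\nu$ appears is the linear term $\int f\,\dd\nu$, and uniform concentration of $\int f\,\dd\mu_0 - \int f\,\dd\emp$ over an RKHS ball controls that term directly, giving $Z_{\epsilon,\mu_0}(\theta) \le Z_{\epsilon,\emp}(\theta) + O(1/\sqrt n)$ with no structural assumption on $l$ in $x$. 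Both arguments then finish identically by subtracting the optimal DRO risk and invoking Proposition~\ref{thm:dro-subopt-guarantee}. Your route is more elementary (no duality needed) but purchases that simplicity with a regularity condition on the loss; the paper's dual route is the one that matches its stated generality.
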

    \section{DISCUSSION}
    In conclusion, we model the measure optimization dynamics as an interacting gradient flow that couples the Fisher-Rao and the RKHS flow. We then analyze its corresponding primal-dual kernel mirror prox algorithm. 
    We provide the first unified convergence analysis for the MKNE problem class. Furthermore, by applying our analysis to the primal-dual reformulation of DRO with kernel MMD constraints,
    we established algorithmic guarantees for DRO under relaxed conditions such as nonlinear losses.
    In the future, we plan to investigate dual functional spaces $\mathcal{F}$ other than RKHS as well as practical implementation and parameterization of the mirror prox algorithm. A related direction of future work is extension of the algorithms and analysis for problems lacking convexity based on techniques in \cite{beznosikov2022decentralized,gorbunov2022clipped}.

\subsubsection*{Acknowledgements}
The authors thank Anna Korba and Lénaïc Chizat for the helpful discussion and pointing out relevant references.
This project has received funding from the Deutsche Forschungsgemeinschaft (DFG, German Research Foundation) under Germany's Excellence Strategy – The Berlin Mathematics Research Center MATH+ (EXC-2046/1, project ID: 390685689).

    



\bibliography{zot_references,zot_ref_additional}

\begin{thebibliography}{75}
\providecommand{\natexlab}[1]{#1}
\providecommand{\url}[1]{\texttt{#1}}
\expandafter\ifx\csname urlstyle\endcsname\relax
  \providecommand{\doi}[1]{doi: #1}\else
  \providecommand{\doi}{doi: \begingroup \urlstyle{rm}\Url}\fi

\bibitem[Agueh \& Carlier(2011)Agueh and Carlier]{agueh2011barycenters}
Agueh, M. and Carlier, G.
\newblock Barycenters in the wasserstein space.
\newblock \emph{SIAM Journal on Mathematical Analysis}, 43\penalty0
  (2):\penalty0 904--924, 2011.

\bibitem[Amari(1998)]{amariNaturalGradientWorks1998}
Amari, S.-I.
\newblock Natural gradient works efficiently in learning.
\newblock \emph{Neural computation}, 10\penalty0 (2):\penalty0 251--276, 1998.

\bibitem[Ambrosio et~al.(2008)Ambrosio, Gigli, and
  Savare]{ambrosio2008gradient}
Ambrosio, L., Gigli, N., and Savare, G.
\newblock \emph{Gradient Flows: In Metric Spaces and in the Space of
  Probability Measures}.
\newblock Springer Science \& Business Media, 2008.

\bibitem[Amos et~al.(2017)Amos, Xu, and Kolter]{amos_input_2017}
Amos, B., Xu, L., and Kolter, J.~Z.
\newblock Input convex neural networks.
\newblock In \emph{International {Conference} on {Machine} {Learning}}, pp.\
  146--155. PMLR, 2017.

\bibitem[Antonakopoulos et~al.(2019)Antonakopoulos, Belmega, and
  Mertikopoulos]{antonakopoulos2019adaptive}
Antonakopoulos, K., Belmega, V., and Mertikopoulos, P.
\newblock An adaptive mirror-prox method for variational inequalities with
  singular operators.
\newblock In Wallach, H., Larochelle, H., Beygelzimer, A., d\textquotesingle
  Alch\'{e}-Buc, F., Fox, E., and Garnett, R. (eds.), \emph{Advances in Neural
  Information Processing Systems 32}, pp.\  8455--8465. Curran Associates,
  Inc., 2019.

\bibitem[Arbel et~al.(2019)Arbel, Korba, Salim, and
  Gretton]{arbel_maximum_2019}
Arbel, M., Korba, A., Salim, A., and Gretton, A.
\newblock Maximum {Mean} {Discrepancy} {Gradient} {Flow}.
\newblock \emph{arXiv:1906.04370 [cs, stat]}, December 2019.
\newblock URL \url{http://arxiv.org/abs/1906.04370}.
\newblock arXiv: 1906.04370.

\bibitem[Arjovsky et~al.(2017)Arjovsky, Chintala, and
  Bottou]{arjovsky2017wasserstein}
Arjovsky, M., Chintala, S., and Bottou, L.
\newblock Wasserstein generative adversarial networks.
\newblock In \emph{International conference on machine learning}, pp.\
  214--223. PMLR, 2017.

\bibitem[Aubin-Frankowski et~al.(2022)Aubin-Frankowski, Korba, and
  L{\'e}ger]{aubin2022mirror}
Aubin-Frankowski, P.-C., Korba, A., and L{\'e}ger, F.
\newblock Mirror descent with relative smoothness in measure spaces, with
  application to sinkhorn and em.
\newblock \emph{Advances in Neural Information Processing Systems},
  35:\penalty0 17263--17275, 2022.

\bibitem[Bach(2017)]{bach2017breaking}
Bach, F.
\newblock Breaking the curse of dimensionality with convex neural networks.
\newblock \emph{The Journal of Machine Learning Research}, 18\penalty0
  (1):\penalty0 629--681, 2017.

\bibitem[Ben-Tal \& Nemirovski(2023)Ben-Tal and
  Nemirovski]{ben-tal2023lectures}
Ben-Tal, A. and Nemirovski, A.
\newblock \emph{Lectures on Modern Convex Optimization (Lecture Notes)}.
\newblock Personal web-page of A. Nemirovski, 2023.

\bibitem[Ben-Tal et~al.(2013)Ben-Tal, Den~Hertog, De~Waegenaere, Melenberg, and
  Rennen]{ben2013robust}
Ben-Tal, A., Den~Hertog, D., De~Waegenaere, A., Melenberg, B., and Rennen, G.
\newblock Robust solutions of optimization problems affected by uncertain
  probabilities.
\newblock \emph{Management Science}, 59\penalty0 (2):\penalty0 341--357, 2013.

\bibitem[Berlinet \& Thomas-Agnan(2011)Berlinet and
  Thomas-Agnan]{berlinet_reproducing_2011}
Berlinet, A. and Thomas-Agnan, C.
\newblock \emph{Reproducing kernel {Hilbert} spaces in probability and
  statistics}.
\newblock Springer Science \& Business Media, 2011.

\bibitem[Beznosikov et~al.(2022)Beznosikov, Dvurechenskii, Koloskova, Samokhin,
  Stich, and Gasnikov]{beznosikov2022decentralized}
Beznosikov, A., Dvurechenskii, P., Koloskova, A., Samokhin, V., Stich, S.~U.,
  and Gasnikov, A.
\newblock Decentralized local stochastic extra-gradient for variational
  inequalities.
\newblock In Koyejo, S., Mohamed, S., Agarwal, A., Belgrave, D., Cho, K., and
  Oh, A. (eds.), \emph{Advances in Neural Information Processing Systems},
  volume~35, pp.\  38116--38133. Curran Associates, Inc., 2022.
\newblock URL
  \url{https://proceedings.neurips.cc/paper_files/paper/2022/file/f9379afacdbabfdc6b060972b60f9ab8-Paper-Conference.pdf}.

\bibitem[Bi{\'n}kowski et~al.(2018)Bi{\'n}kowski, Sutherland, Arbel, and
  Gretton]{binkowski_demystifying_2018}
Bi{\'n}kowski, M., Sutherland, D.~J., Arbel, M., and Gretton, A.
\newblock Demystifying mmd gans.
\newblock \emph{arXiv preprint arXiv:1801.01401}, 2018.

\bibitem[Boucheron et~al.(2005)Boucheron, Bousquet, and
  Lugosi]{boucheron_theory_2005}
Boucheron, S., Bousquet, O., and Lugosi, G.
\newblock Theory of {Classification}: a {Survey} of {Some} {Recent} {Advances}.
\newblock \emph{ESAIM: Probability and Statistics}, 9:\penalty0 323--375,
  November 2005.
\newblock ISSN 1292-8100, 1262-3318.
\newblock \doi{10.1051/ps:2005018}.
\newblock URL \url{http://www.esaim-ps.org/10.1051/ps:2005018}.

\bibitem[Chewi et~al.(2020)Chewi, Le~Gouic, Lu, Maunu, and
  Rigollet]{chewi_svgd_2020}
Chewi, S., Le~Gouic, T., Lu, C., Maunu, T., and Rigollet, P.
\newblock Svgd as a kernelized wasserstein gradient flow of the chi-squared
  divergence.
\newblock \emph{Advances in Neural Information Processing Systems},
  33:\penalty0 2098--2109, 2020.

\bibitem[Chizat \& Bach(2018)Chizat and Bach]{chizat2018global}
Chizat, L. and Bach, F.
\newblock On the global convergence of gradient descent for over-parameterized
  models using optimal transport.
\newblock \emph{Advances in neural information processing systems}, 31, 2018.

\bibitem[Chu et~al.(2020)Chu, Minami, and Fukumizu]{chu2020equivalence}
Chu, C., Minami, K., and Fukumizu, K.
\newblock The equivalence between stein variational gradient descent and
  black-box variational inference.
\newblock \emph{arXiv preprint arXiv:2004.01822}, 2020.

\bibitem[Dai et~al.(2014)Dai, Xie, He, Liang, Raj, Balcan, and
  Song]{dai2014scalable}
Dai, B., Xie, B., He, N., Liang, Y., Raj, A., Balcan, M.-F.~F., and Song, L.
\newblock Scalable kernel methods via doubly stochastic gradients.
\newblock \emph{Advances in neural information processing systems}, 27, 2014.

\bibitem[Dai et~al.(2016)Dai, He, Dai, and Song]{dai2016provable}
Dai, B., He, N., Dai, H., and Song, L.
\newblock Provable bayesian inference via particle mirror descent.
\newblock In \emph{Artificial Intelligence and Statistics}, pp.\  985--994.
  PMLR, 2016.

\bibitem[Delage \& Ye(2010)Delage and Ye]{delage_distributionally_2010}
Delage, E. and Ye, Y.
\newblock Distributionally robust optimization under moment uncertainty with
  application to data-driven problems.
\newblock \emph{Operations research}, 58\penalty0 (3):\penalty0 595--612, 2010.
\newblock Publisher: INFORMS.

\bibitem[Domingo-Enrich et~al.(2020)Domingo-Enrich, Jelassi, Mensch, Rotskoff,
  and Bruna]{domingo-enrich_mean-field_2020}
Domingo-Enrich, C., Jelassi, S., Mensch, A., Rotskoff, G., and Bruna, J.
\newblock A mean-field analysis of two-player zero-sum games.
\newblock In \emph{Advances in Neural Information Processing Systems},
  volume~33, pp.\  20215--20226. Curran Associates, Inc., 2020.

\bibitem[Duncan et~al.(2019)Duncan, N{\"u}sken, and
  Szpruch]{duncanGeometrySteinVariational2023}
Duncan, A., N{\"u}sken, N., and Szpruch, L.
\newblock On the geometry of stein variational gradient descent.
\newblock \emph{arXiv preprint arXiv:1912.00894}, 2019.

\bibitem[Dvinskikh \& Tiapkin(2021)Dvinskikh and
  Tiapkin]{dvinskikh2021improved}
Dvinskikh, D. and Tiapkin, D.
\newblock Improved complexity bounds in wasserstein barycenter problem.
\newblock In Banerjee, A. and Fukumizu, K. (eds.), \emph{The 24th International
  Conference on Artificial Intelligence and Statistics, {AISTATS} 2021, April
  13-15, 2021, Virtual Event}, volume 130 of \emph{Proceedings of Machine
  Learning Research}, pp.\  1738--1746. {PMLR}, 2021.
\newblock URL \url{http://proceedings.mlr.press/v130/dvinskikh21a.html}.

\bibitem[Dvurechensky et~al.(2015)Dvurechensky, Nesterov, and
  Spokoiny]{dvurechensky2015primal-dual}
Dvurechensky, P., Nesterov, Y., and Spokoiny, V.
\newblock Primal-dual methods for solving infinite-dimensional games.
\newblock \emph{Journal of Optimization Theory and Applications}, 166\penalty0
  (1):\penalty0 23--51, Jul 2015.

\bibitem[Dvurechensky et~al.(2018{\natexlab{a}})Dvurechensky, Dvinskikh,
  Gasnikov, Uribe, and Nedi\'c]{dvurechensky2018decentralize}
Dvurechensky, P., Dvinskikh, D., Gasnikov, A., Uribe, C.~A., and Nedi\'c, A.
\newblock Decentralize and randomize: Faster algorithm for {W}asserstein
  barycenters.
\newblock In Bengio, S., Wallach, H., Larochelle, H., Grauman, K.,
  Cesa-Bianchi, N., and Garnett, R. (eds.), \emph{Advances in Neural
  Information Processing Systems 31}, NeurIPS 2018, pp.\  10783--10793. Curran
  Associates, Inc., 2018{\natexlab{a}}.

\bibitem[Dvurechensky et~al.(2018{\natexlab{b}})Dvurechensky, Gasnikov, and
  Kroshnin]{dvurechensky2018computational}
Dvurechensky, P., Gasnikov, A., and Kroshnin, A.
\newblock Computational optimal transport: Complexity by accelerated gradient
  descent is better than by {S}inkhorn's algorithm.
\newblock In Dy, J. and Krause, A. (eds.), \emph{Proceedings of the 35th
  International Conference on Machine Learning}, volume~80 of \emph{Proceedings
  of Machine Learning Research}, pp.\  1367--1376, 2018{\natexlab{b}}.

\bibitem[Dziugaite et~al.()Dziugaite, Roy, and
  Ghahramani]{dziugaite_training_2015}
Dziugaite, G.~K., Roy, D.~M., and Ghahramani, Z.
\newblock Training generative neural networks via maximum mean discrepancy
  optimization.

\bibitem[Gao \& Kleywegt(2016)Gao and Kleywegt]{gao_distributionally_2016}
Gao, R. and Kleywegt, A.~J.
\newblock Distributionally {Robust} {Stochastic} {Optimization} with
  {Wasserstein} {Distance}.
\newblock \emph{arXiv:1604.02199 [math]}, July 2016.
\newblock URL \url{http://arxiv.org/abs/1604.02199}.
\newblock arXiv: 1604.02199.

\bibitem[Genevay et~al.(2016)Genevay, Cuturi, Peyré, and
  Bach]{genevay_stochastic_2016}
Genevay, A., Cuturi, M., Peyré, G., and Bach, F.
\newblock Stochastic optimization for large-scale optimal transport.
\newblock \emph{Advances in neural information processing systems}, 29, 2016.

\bibitem[Goodfellow et~al.(2014)Goodfellow, Pouget-Abadie, Mirza, Xu,
  Warde-Farley, Ozair, Courville, and Bengio]{goodfellow_generative_2014}
Goodfellow, I.~J., Pouget-Abadie, J., Mirza, M., Xu, B., Warde-Farley, D.,
  Ozair, S., Courville, A., and Bengio, Y.
\newblock Generative {Adversarial} {Networks}.
\newblock \emph{arXiv:1406.2661 [cs, stat]}, June 2014.
\newblock URL \url{http://arxiv.org/abs/1406.2661}.
\newblock arXiv: 1406.2661.

\bibitem[Gorbunov et~al.(2022)Gorbunov, Danilova, Dobre, Dvurechenskii,
  Gasnikov, and Gidel]{gorbunov2022clipped}
Gorbunov, E., Danilova, M., Dobre, D., Dvurechenskii, P., Gasnikov, A., and
  Gidel, G.
\newblock Clipped stochastic methods for variational inequalities with
  heavy-tailed noise.
\newblock In Koyejo, S., Mohamed, S., Agarwal, A., Belgrave, D., Cho, K., and
  Oh, A. (eds.), \emph{Advances in Neural Information Processing Systems},
  volume~35, pp.\  31319--31332. Curran Associates, Inc., 2022.
\newblock URL
  \url{https://proceedings.neurips.cc/paper_files/paper/2022/file/cb0ce861adaf6f8a93069c064733f402-Paper-Conference.pdf}.

\bibitem[Gretton et~al.(2012)Gretton, Borgwardt, Rasch, Schölkopf, and
  Smola]{gretton_kernel_2012}
Gretton, A., Borgwardt, K.~M., Rasch, M.~J., Schölkopf, B., and Smola, A.
\newblock A kernel two-sample test.
\newblock \emph{The Journal of Machine Learning Research}, 13\penalty0
  (1):\penalty0 723--773, 2012.
\newblock Publisher: JMLR. org.

\bibitem[Gulrajani et~al.(2017)Gulrajani, Ahmed, Arjovsky, Dumoulin, and
  Courville]{gulrajani_improved_2017}
Gulrajani, I., Ahmed, F., Arjovsky, M., Dumoulin, V., and Courville, A.
\newblock Improved {Training} of {Wasserstein} {GANs}.
\newblock \emph{arXiv:1704.00028 [cs, stat]}, December 2017.
\newblock URL \url{http://arxiv.org/abs/1704.00028}.
\newblock arXiv: 1704.00028.

\bibitem[Guminov et~al.(2021)Guminov, Dvurechensky, Tupitsa, and
  Gasnikov]{guminov2021combination}
Guminov, S., Dvurechensky, P., Tupitsa, N., and Gasnikov, A.
\newblock On a combination of alternating minimization and {N}esterov's
  momentum.
\newblock In Meila, M. and Zhang, T. (eds.), \emph{Proceedings of the 38th
  International Conference on Machine Learning}, volume 139 of
  \emph{Proceedings of Machine Learning Research}, pp.\  3886--3898, Virtual,
  18--24 Jul 2021. PMLR.
\newblock URL \url{http://proceedings.mlr.press/v139/guminov21a.html}.

\bibitem[Hsieh et~al.(2019)Hsieh, Liu, and Cevher]{hsieh2019finding}
Hsieh, Y.-P., Liu, C., and Cevher, V.
\newblock Finding mixed {N}ash equilibria of generative adversarial networks.
\newblock In Chaudhuri, K. and Salakhutdinov, R. (eds.), \emph{Proceedings of
  the 36th International Conference on Machine Learning}, volume~97 of
  \emph{Proceedings of Machine Learning Research}, pp.\  2810--2819. PMLR,
  09--15 Jun 2019.

\bibitem[Jambulapati et~al.(2019)Jambulapati, Sidford, and
  Tian]{jambulapati2019direct}
Jambulapati, A., Sidford, A., and Tian, K.
\newblock A direct tilde $\{$O$\}$(1/epsilon) iteration parallel algorithm for
  optimal transport.
\newblock \emph{Advances in Neural Information Processing Systems}, 32, 2019.

\bibitem[Jordan et~al.(1998)Jordan, Kinderlehrer, and
  Otto]{jordan_variational_1998}
Jordan, R., Kinderlehrer, D., and Otto, F.
\newblock The variational formulation of the {Fokker}–{Planck} equation.
\newblock \emph{SIAM journal on mathematical analysis}, 29\penalty0
  (1):\penalty0 1--17, 1998.
\newblock Publisher: SIAM.

\bibitem[Khan \& Nielsen(2018)Khan and Nielsen]{khan2018fast}
Khan, M.~E. and Nielsen, D.
\newblock Fast yet simple natural-gradient descent for variational inference in
  complex models.
\newblock In \emph{2018 International Symposium on Information Theory and Its
  Applications (ISITA)}, pp.\  31--35. IEEE, 2018.

\bibitem[Kim et~al.(2024)Kim, Yamamoto, Oko, Yang, and
  Suzuki]{kimSymmetricMeanfieldLangevin2024}
Kim, J., Yamamoto, K., Oko, K., Yang, Z., and Suzuki, T.
\newblock Symmetric {{Mean-field Langevin Dynamics}} for {{Distributional
  Minimax Problems}}, February 2024.

\bibitem[Korba et~al.(2021)Korba, Aubin-Frankowski, Majewski, and
  Ablin]{korba_kernel_2021}
Korba, A., Aubin-Frankowski, P.-C., Majewski, S., and Ablin, P.
\newblock Kernel {Stein} {Discrepancy} {Descent}.
\newblock In \emph{Proceedings of the 38th {International} {Conference} on
  {Machine} {Learning}}, pp.\  5719--5730. PMLR, July 2021.
\newblock ISSN: 2640-3498.

\bibitem[Korotin et~al.(2019)Korotin, Egiazarian, Asadulaev, Safin, and
  Burnaev]{korotin_wasserstein-2_2020}
Korotin, A., Egiazarian, V., Asadulaev, A., Safin, A., and Burnaev, E.
\newblock Wasserstein-2 generative networks.
\newblock \emph{arXiv preprint arXiv:1909.13082}, 2019.

\bibitem[Korotin et~al.(2021)Korotin, Li, Solomon, and
  Burnaev]{korotin_continuous_2021}
Korotin, A., Li, L., Solomon, J., and Burnaev, E.
\newblock Continuous {Wasserstein}-2 {Barycenter} {Estimation} without
  {Minimax} {Optimization}.
\newblock \emph{arXiv:2102.01752 [cs, stat]}, February 2021.
\newblock URL \url{http://arxiv.org/abs/2102.01752}.
\newblock arXiv: 2102.01752.

\bibitem[Kremer et~al.(2023)Kremer, Nemmour, Sch{\"o}lkopf, and
  Zhu]{pmlr-v202-kremer23a}
Kremer, H., Nemmour, Y., Sch{\"o}lkopf, B., and Zhu, J.-J.
\newblock Estimation beyond data reweighting: {{Kernel}} method of moments.
\newblock In Krause, A., Brunskill, E., Cho, K., Engelhardt, B., Sabato, S.,
  and Scarlett, J. (eds.), \emph{Proceedings of the 40th International
  Conference on Machine Learning}, volume 202 of \emph{Proceedings of Machine
  Learning Research}, pp.\  17745--17783. {PMLR}, July 2023.

\bibitem[Kroshnin et~al.(2019)Kroshnin, Tupitsa, Dvinskikh, Dvurechensky,
  Gasnikov, and Uribe]{kroshnin2019complexity}
Kroshnin, A., Tupitsa, N., Dvinskikh, D., Dvurechensky, P., Gasnikov, A., and
  Uribe, C.
\newblock On the complexity of approximating {W}asserstein barycenters.
\newblock In Chaudhuri, K. and Salakhutdinov, R. (eds.), \emph{Proceedings of
  the 36th International Conference on Machine Learning}, volume~97 of
  \emph{Proceedings of Machine Learning Research}, pp.\  3530--3540, Long
  Beach, California, USA, 09--15 Jun 2019. PMLR.

\bibitem[Kuhn et~al.(2019)Kuhn, Esfahani, Nguyen, and
  Shafieezadeh-Abadeh]{netessine_wasserstein_2019}
Kuhn, D., Esfahani, P.~M., Nguyen, V.~A., and Shafieezadeh-Abadeh, S.
\newblock Wasserstein {Distributionally} {Robust} {Optimization}: {Theory} and
  {Applications} in {Machine} {Learning}.
\newblock In Netessine, S., Shier, D., and Greenberg, H.~J. (eds.),
  \emph{Operations {Research} \& {Management} {Science} in the {Age} of
  {Analytics}}, pp.\  130--166. INFORMS, October 2019.
\newblock ISBN 978-0-9906153-3-0.
\newblock \doi{10.1287/educ.2019.0198}.

\bibitem[Li et~al.(2017)Li, Chang, Cheng, Yang, and P{\'o}czos]{li_mmd_2017}
Li, C.-L., Chang, W.-C., Cheng, Y., Yang, Y., and P{\'o}czos, B.
\newblock Mmd gan: Towards deeper understanding of moment matching network.
\newblock \emph{Advances in neural information processing systems}, 30, 2017.

\bibitem[Li et~al.(2020)Li, Genevay, Yurochkin, and
  Solomon]{li_continuous_2020}
Li, L., Genevay, A., Yurochkin, M., and Solomon, J.
\newblock Continuous {Regularized} {Wasserstein} {Barycenters}.
\newblock \emph{arXiv:2008.12534 [cs, stat]}, October 2020.
\newblock URL \url{http://arxiv.org/abs/2008.12534}.
\newblock arXiv: 2008.12534.

\bibitem[Li et~al.(2015)Li, Swersky, and Zemel]{li_generative_2015}
Li, Y., Swersky, K., and Zemel, R.
\newblock Generative moment matching networks.
\newblock In \emph{International conference on machine learning}, pp.\
  1718--1727. PMLR, 2015.

\bibitem[Liu \& Wang(2019)Liu and Wang]{liu_stein_2019}
Liu, Q. and Wang, D.
\newblock Stein {Variational} {Gradient} {Descent}: {A} {General} {Purpose}
  {Bayesian} {Inference} {Algorithm}.
\newblock \emph{arXiv:1608.04471 [cs, stat]}, September 2019.
\newblock URL \url{http://arxiv.org/abs/1608.04471}.
\newblock arXiv: 1608.04471.

\bibitem[Makkuva et~al.(2020)Makkuva, Taghvaei, Oh, and
  Lee]{makkuva_optimal_nodate}
Makkuva, A., Taghvaei, A., Oh, S., and Lee, J.
\newblock Optimal transport mapping via input convex neural networks.
\newblock In \emph{International Conference on Machine Learning}, pp.\
  6672--6681. PMLR, 2020.

\bibitem[Mei et~al.(2018)Mei, Montanari, and Nguyen]{mei_mean_2018}
Mei, S., Montanari, A., and Nguyen, P.-M.
\newblock A mean field view of the landscape of two-layer neural networks.
\newblock \emph{Proceedings of the National Academy of Sciences}, 115\penalty0
  (33):\penalty0 E7665--E7671, August 2018.
\newblock ISSN 0027-8424, 1091-6490.
\newblock \doi{10.1073/pnas.1806579115}.
\newblock URL \url{http://www.pnas.org/lookup/doi/10.1073/pnas.1806579115}.

\bibitem[Mielke(2023)]{mielke_gradients_nodate}
Mielke, A.
\newblock An introduction to the analysis of gradients systems, June 2023.
\newblock URL \url{http://arxiv.org/abs/2306.05026}.
\newblock arXiv:2306.05026 [math-ph].

\bibitem[Nemirovski et~al.(2009)Nemirovski, Juditsky, Lan, and
  Shapiro]{nemirovski_robust_2009}
Nemirovski, A., Juditsky, A., Lan, G., and Shapiro, A.
\newblock Robust {Stochastic} {Approximation} {Approach} to {Stochastic}
  {Programming}.
\newblock \emph{SIAM Journal on Optimization}, 19\penalty0 (4):\penalty0
  1574--1609, January 2009.
\newblock ISSN 1052-6234, 1095-7189.
\newblock \doi{10.1137/070704277}.
\newblock URL \url{http://epubs.siam.org/doi/10.1137/070704277}.

\bibitem[Nesterov(2007)]{nesterov2007dual}
Nesterov, Y.
\newblock Dual extrapolation and its applications to solving variational
  inequalities and related problems.
\newblock \emph{Mathematical Programming}, 109\penalty0 (2-3):\penalty0
  319--344, 2007.

\bibitem[Nowozin et~al.()Nowozin, Cseke, and Tomioka]{nowozin_f-gan_2016}
Nowozin, S., Cseke, B., and Tomioka, R.
\newblock f-{GAN}: Training generative neural samplers using variational
  divergence minimization.
\newblock In \emph{Advances in Neural Information Processing Systems},
  volume~29. Curran Associates, Inc.

\bibitem[Otto(1996)]{otto_double_1996}
Otto, F.
\newblock \emph{Double degenerate diffusion equations as steepest descent}.
\newblock Citeseer, 1996.

\bibitem[Otto(2001)]{otto_geometry_2001}
Otto, F.
\newblock The geometry of dissipative evolution equations: the porous medium
  equation.
\newblock 2001.
\newblock Publisher: Taylor \& Francis.

\bibitem[Peyr\'e \& Cuturi(2019)Peyr\'e and Cuturi]{peyre2018computational}
Peyr\'e, G. and Cuturi, M.
\newblock Computational optimal transport: With applications to data science.
\newblock \emph{Foundations and Trends® in Machine Learning}, 11\penalty0
  (5-6):\penalty0 355--607, 2019.
\newblock ISSN 1935-8237.
\newblock \doi{10.1561/2200000073}.
\newblock URL \url{http://dx.doi.org/10.1561/2200000073}.

\bibitem[Rahimi \& Recht(2007)Rahimi and Recht]{rahimi_random_2007}
Rahimi, A. and Recht, B.
\newblock Random features for large-scale kernel machines.
\newblock \emph{Advances in neural information processing systems}, 20, 2007.

\bibitem[Santambrogio(2015)]{santambrogio_optimal_2015}
Santambrogio, F.
\newblock Optimal transport for applied mathematicians.
\newblock \emph{Birkäuser, NY}, 55\penalty0 (58-63):\penalty0 94, 2015.
\newblock Publisher: Springer.

\bibitem[Sinha et~al.(2020)Sinha, Namkoong, Volpi, and
  Duchi]{sinha_certifying_2020}
Sinha, A., Namkoong, H., Volpi, R., and Duchi, J.
\newblock Certifying {Some} {Distributional} {Robustness} with {Principled}
  {Adversarial} {Training}.
\newblock \emph{arXiv:1710.10571 [cs, stat]}, May 2020.
\newblock URL \url{http://arxiv.org/abs/1710.10571}.
\newblock arXiv: 1710.10571.

\bibitem[Steinwart \& Christmann(2008)Steinwart and
  Christmann]{steinwart_support_2008}
Steinwart, I. and Christmann, A.
\newblock \emph{Support vector machines}.
\newblock Springer Science \& Business Media, 2008.

\bibitem[Stonyakin et~al.(2021)Stonyakin, Tyurin, Gasnikov, Dvurechensky,
  Agafonov, Dvinskikh, Alkousa, Pasechnyuk, Artamonov, and
  Piskunova]{stonyakin2021inexact}
Stonyakin, F., Tyurin, A., Gasnikov, A., Dvurechensky, P., Agafonov, A.,
  Dvinskikh, D., Alkousa, M., Pasechnyuk, D., Artamonov, S., and Piskunova, V.
\newblock Inexact model: a framework for optimization and variational
  inequalities.
\newblock \emph{Optimization Methods and Software}, 36\penalty0 (6):\penalty0
  1155--1201, 2021.
\newblock \doi{10.1080/10556788.2021.1924714}.
\newblock URL \url{https://doi.org/10.1080/10556788.2021.1924714}.

\bibitem[Stonyakin et~al.(2022)Stonyakin, Gasnikov, Dvurechensky, Titov, and
  Alkousa]{stonyakin2022generalized}
Stonyakin, F., Gasnikov, A., Dvurechensky, P., Titov, A., and Alkousa, M.
\newblock Generalized mirror prox algorithm for monotone variational
  inequalities: Universality and inexact oracle.
\newblock \emph{Journal of Optimization Theory and Applications}, 194\penalty0
  (3):\penalty0 988--1013, Sep 2022.
\newblock ISSN 1573-2878.

\bibitem[Tiapkin et~al.(2022)Tiapkin, Gasnikov, and
  Dvurechensky]{tiapkin_stochastic_2021}
Tiapkin, D., Gasnikov, A., and Dvurechensky, P.
\newblock Stochastic saddle-point optimization for the wasserstein barycenter
  problem.
\newblock \emph{Optimization Letters}, 16\penalty0 (7):\penalty0 2145--2175,
  Sep 2022.
\newblock ISSN 1862-4480.
\newblock \doi{10.1007/s11590-021-01834-w}.
\newblock URL \url{https://doi.org/10.1007/s11590-021-01834-w}.
\newblock arXiv:2006.06763.

\bibitem[Tolstikhin et~al.(2017)Tolstikhin, Sriperumbudur, and
  Muandet]{Tolstikhin2017}
Tolstikhin, I., Sriperumbudur, B.~K., and Muandet, K.
\newblock Minimax {{Estimation}} of {{Kernel Mean Embeddings}}.
\newblock \emph{Journal of Machine Learning Research}, 18:\penalty0 1--47,
  2017.

\bibitem[Trillos \& Trillos(2023)Trillos and Trillos]{trillos_adversarial_2023}
Trillos, C.~G. and Trillos, N.~G.
\newblock On adversarial robustness and the use of wasserstein ascent-descent
  dynamics to enforce it.
\newblock \emph{arXiv preprint arXiv:2301.03662}, 2023.

\bibitem[Vacher et~al.(2021)Vacher, Muzellec, Rudi, Bach, and
  Vialard]{vacher21dimension-free}
Vacher, A., Muzellec, B., Rudi, A., Bach, F., and Vialard, F.-X.
\newblock A dimension-free computational upper-bound for smooth optimal
  transport estimation.
\newblock In Belkin, M. and Kpotufe, S. (eds.), \emph{Proceedings of Thirty
  Fourth Conference on Learning Theory}, volume 134 of \emph{Proceedings of
  Machine Learning Research}, pp.\  4143--4173. PMLR, 15--19 Aug 2021.
\newblock URL \url{https://proceedings.mlr.press/v134/vacher21a.html}.

\bibitem[Wainwright(2019)]{wainwright2019high}
Wainwright, M.~J.
\newblock \emph{High-dimensional statistics: A non-asymptotic viewpoint},
  volume~48.
\newblock Cambridge university press, 2019.

\bibitem[Wang \& Chizat(2022)Wang and Chizat]{wang_exponentially_2022}
Wang, G. and Chizat, L.
\newblock An exponentially converging particle method for the mixed nash
  equilibrium of continuous games.
\newblock \emph{arXiv preprint arXiv:2211.01280}, 2022.

\bibitem[Wang et~al.(2021)Wang, Ma, Bailey, Yi, Zhou, and
  Gu]{wang_convergence_2021}
Wang, Y., Ma, X., Bailey, J., Yi, J., Zhou, B., and Gu, Q.
\newblock On the {Convergence} and {Robustness} of {Adversarial} {Training}.
\newblock \emph{arXiv:2112.08304 [cs]}, December 2021.
\newblock URL \url{http://arxiv.org/abs/2112.08304}.
\newblock arXiv: 2112.08304.

\bibitem[Wendland(2004)]{wendland_scattered_2004}
Wendland, H.
\newblock \emph{Scattered data approximation}, volume~17.
\newblock Cambridge university press, 2004.

\bibitem[Zhao \& Guan(2018)Zhao and Guan]{zhao_data-driven_2018}
Zhao, C. and Guan, Y.
\newblock Data-driven risk-averse stochastic optimization with {Wasserstein}
  metric.
\newblock \emph{Operations Research Letters}, 46\penalty0 (2):\penalty0
  262--267, March 2018.
\newblock ISSN 01676377.
\newblock \doi{10.1016/j.orl.2018.01.011}.

\bibitem[Zhu et~al.(2021)Zhu, Jitkrittum, Diehl, and
  Schölkopf]{zhu_kernel_2021}
Zhu, J.-J., Jitkrittum, W., Diehl, M., and Schölkopf, B.
\newblock Kernel {Distributionally} {Robust} {Optimization}: {Generalized}
  {Duality} {Theorem} and {Stochastic} {Approximation}.
\newblock In \emph{Proceedings of {The} 24th {International} {Conference} on
  {Artificial} {Intelligence} and {Statistics}}, pp.\  280--288. PMLR, March
  2021.
\newblock ISSN: 2640-3498.

\end{thebibliography}
\bibliographystyle{icml2023}

\section*{Checklist}



 \begin{enumerate}

 \item For all models and algorithms presented, check if you include:
 \begin{enumerate}
   \item A clear description of the mathematical setting, assumptions, algorithm, and/or model. [Yes]
   \item An analysis of the properties and complexity (time, space, sample size) of any algorithm. [Yes]
   \item (Optional) Anonymized source code, with specification of all dependencies, including external libraries. [Not Applicable]
 \end{enumerate}

 \item For any theoretical claim, check if you include:
 \begin{enumerate}
   \item Statements of the full set of assumptions of all theoretical results. [Yes]
   \item Complete proofs of all theoretical results. [Yes]
   \item Clear explanations of any assumptions. [Yes]     
 \end{enumerate}

 \item For all figures and tables that present empirical results, check if you include:
 \begin{enumerate}
   \item The code, data, and instructions needed to reproduce the main experimental results (either in the supplemental material or as a URL). [Not Applicable]
   \item All the training details (e.g., data splits, hyperparameters, how they were chosen). [Not Applicable]
         \item A clear definition of the specific measure or statistics and error bars (e.g., with respect to the random seed after running experiments multiple times). [Not Applicable]
         \item A description of the computing infrastructure used. (e.g., type of GPUs, internal cluster, or cloud provider). [Not Applicable]
 \end{enumerate}

 \item If you are using existing assets (e.g., code, data, models) or curating/releasing new assets, check if you include:
 \begin{enumerate}
   \item Citations of the creator If your work uses existing assets. [Not Applicable]
   \item The license information of the assets, if applicable. [Not Applicable]
   \item New assets either in the supplemental material or as a URL, if applicable. [Not Applicable]
   \item Information about consent from data providers/curators. [Not Applicable]
   \item Discussion of sensible content if applicable, e.g., personally identifiable information or offensive content. [Not Applicable]
 \end{enumerate}

 \item If you used crowdsourcing or conducted research with human subjects, check if you include:
 \begin{enumerate}
   \item The full text of instructions given to participants and screenshots. [Not Applicable]
   \item Descriptions of potential participant risks, with links to Institutional Review Board (IRB) approvals if applicable. [Not Applicable]
   \item The estimated hourly wage paid to participants and the total amount spent on participant compensation. [Not Applicable]
 \end{enumerate}

 \end{enumerate}

\newpage
\appendix
\onecolumn

\section{FURTHER TECHNICAL BACKGROUND}

\subsection{List of Acronyms}

\begin{itemize}
    \item DRO -- Distributionally Robust Optimization
    \item \eviL -- evolutionary variational inequality
    \item GAN -- Generative Adversarial Network
    \item ICNN -- input convex neural network
    \item IGM -- implicit generative models
    \item IPM -- integral probability metric
    \item KMP -- Kernel Mirror Prox
    \item MMD -- maximum mean discrepancy
    \item MMS -- minimizing movement scheme
    \item MNE -- Mixed Nash Equilibrium
    \item MFNE -- Mixed Functional Nash Equilibrium
    \item MKNE -- Mixed Kernel Nash Equilibrium
    \item RKHS -- Reproducing kernel Hilbert space
    \item RKHSGF -- RKHS gradient flow 
    \item TV -- Total Variation
    \item WGF -- Wasserstein gradient flow
\end{itemize}

\subsection{Proof of Lemma~\ref{thm-dro-reform-lemma}}
We now prove Lemma~\ref{thm-dro-reform-lemma}, of which a more detailed version is stated below.
  \begin{lemma}
    [Primal-dual reformulation of Wasserstein and kernel DRO]
    Suppose the probability metric $\mathcal D$ is chosen to be the MMD,
    then the DRO problem~\eqref{eq-dro-ipm} admits the following equivalent primal-dual reformulations
    \begin{align}
        \inf_{\theta \in {\Theta \subseteq } \mathbb{R}^d,f\in \rkhs, \tau>0}
        \sup_{\mu\in\mathcal M} 
    \E_{\mu}{(l(\theta;x) - f(x))} +  \frac 1n\sum_{i=1}^n f (x_i) + \frac{\tau}{2}\|f\|^2_\rkhs +\frac{\epsilon^2}{2\tau},
    \label{eq-kdro-dual-ap-1}
        \\
      \inf_{\theta \in {\Theta \subseteq } \mathbb{R}^d, f\in \rkhs}     
      \sup_{\mu\in\mathcal M} 
      \E_{\mu}{(l(\theta;x) - f(x))}+
      \frac 1n\sum_{i=1}^n f (x_i)
      + \epsilon\|f\|_\rkhs
      .
      \label{eq-kdro-dual-ap-2}
    \end{align}
    Furthermore, it is equivalent to the smooth saddle-point problem
    \begin{align}
      \label{eq:kdro_smooth-ap}
          \inf_{\theta \in {\Theta \subseteq } \mathbb{R}^d, f\in \rkhs}
          \sup_{\mu\in \mathcal M, h \in \rkhs: \|h\|_{\rkhs}\leq 1}
          \quad
          \left\{ 
            \frac 1n\sum_{i=1}^n f (x_i) + \epsilon \langle h,f \rangle + \E_{ \mu} {(l(\theta;x) - f(x))}
          \right\}
              .
      \end{align}
      
      Suppose the probability metric $\mathcal D$ is chosen to be the optimal transport metric, e.g., $p$-Wasserstein distance.
      Then, the DRO problem~\eqref{eq-dro-ipm} admits the following equivalent reformulations
      \begin{align}
        \inf_{\gamma>0,\theta \in {\Theta \subseteq } \mathbb{R}^d,
        f\in\Psi_{\gamma\cdot c}}     
        \sup_{\mu\in\mathcal M} 
        \mathbb E_{\mu}{(l(\theta;x) - f(x))}
        - \frac 1n\sum_{i=1}^n  f^{\gamma\cdot c}(x_i)
        + \gamma\cdot\epsilon
      ,
    \\
      \inf_{\gamma>0,\theta \in {\Theta \subseteq } \mathbb{R}^d,
      f\in\Psi_c}     
      \sup_{\mu\in\mathcal M} 
      \E_{\mu}{(l(\theta;x) - \gamma\cdot f(x))}
      -\frac 1n\sum_{i=1}^n \gamma\cdot f^c (x_i)
      + \gamma\cdot\epsilon
    .
    \end{align}
    Here $\Psi_c$ denotes the set of $c$-concave functions~\citep{santambrogio_optimal_2015} and $f^c(y):=\inf_x c(x,y)-f(x)$ denotes the $c$-transform.
  \end{lemma}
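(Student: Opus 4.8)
The plan is to start from the DRO problem \eqref{eq-dro-ipm} and, for each choice of metric, convexify the inner supremum over the ambiguity ball using Lagrangian/Fenchel duality, then massage the result into the stated saddle-point forms. For the \textbf{MMD case}, I would first write the inner problem $\sup_{\mu:\mmd(\mu,\emp)\le\epsilon}\E_\mu[l(\theta;x)]$ and introduce a dual multiplier $\tau>0$ for the constraint $\mmd(\mu,\emp)\le\epsilon$. Recalling that $\mmd(\mu,\emp)=\|m_\mu-m_{\emp}\|_\rkhs$ where $m_\mu=\int k(x,\cdot)\,\dd\mu$ is the kernel mean embedding, I would use the fact that $\mmd(\mu,\emp)=\sup_{\|h\|_\rkhs\le1}\langle h, m_\mu - m_{\emp}\rangle = \sup_{\|h\|_\rkhs\le 1}\bigl(\E_\mu h(x) - \frac1n\sum_i h(x_i)\bigr)$. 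Introducing $f=\tau h$ (or keeping $f$ free and $h$ the unit-ball variable) and moving the embedding penalty inside, one obtains $\inf_{f\in\rkhs}\sup_{\mu}\bigl\{\E_\mu(l(\theta;x)-f(x)) + \frac1n\sum_i f(x_i) + \text{(penalty term)}\bigr\}$. The three MMD reformulations then differ only in how the penalty is written: the biconjugate identity $\epsilon\|f\|_\rkhs = \sup_{\|h\|_\rkhs\le1}\epsilon\langle h,f\rangle$ gives the smooth form \eqref{eq:kdro_smooth-ap}; optimizing the quadratic-plus-reciprocal $\inf_{\tau>0}\{\frac\tau2\|f\|^2_\rkhs+\frac{\epsilon^2}{2\tau}\}=\epsilon\|f\|_\rkhs$ connects \eqref{eq-kdro-dual-ap-1} and \eqref{eq-kdro-dual-ap-2}. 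I would justify the swap of $\inf$ and $\sup$ by a minimax theorem (e.g.\ Sion's), using compactness of $\mathcal M$ and the convex-concave structure established in Lemma~\ref{thm-dro-reform-lemma}; weak-$*$ compactness of $\mathcal M=\probSimplex$ on the compact domain $\bar\Omega$ and weak-$*$ continuity of $\mu\mapsto\E_\mu\phi$ for bounded continuous $\phi$ are the key ingredients.

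For the \textbf{Wasserstein case}, I would invoke the strong duality result for Wasserstein-ball DRO (as in \citet{gao_distributionally_2016}, or via the Kantorovich dual \eqref{label-kantorovich-dual}). Introducing a multiplier $\gamma>0$ for the constraint $W_p^p(\mu,\emp)\le\epsilon^p$ (or $W_p\le\epsilon$, absorbing powers into the cost scaling), the inner problem becomes $\inf_{\gamma>0}\bigl\{\gamma\epsilon + \sup_\mu(\E_\mu l(\theta;x) - \gamma W_p^p(\mu,\emp))\bigr\}$. Expanding $W_p^p(\mu,\emp)$ via its Kantorovich dual and performing the optimization over couplings with marginal $\emp$ fixed, the term $\sup_\mu(\E_\mu l - \gamma W^p_p)$ reduces to $\sup_\mu \E_\mu(l(\theta;x) - \gamma f(x)) - \frac\gamma n\sum_i f^c(x_i)$ after recognizing the structure as a $c$-transform, with $f$ ranging over $c$-concave functions. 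The two Wasserstein reformulations then just record whether the scaling $\gamma$ is kept inside the cost ($f\in\Psi_{\gamma c}$) or pulled out onto $f$ itself ($f\in\Psi_c$, $\gamma f$), which is a change of variables $f\mapsto\gamma f$ combined with $(\gamma f)^{\gamma c}=\gamma f^c$.

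The \textbf{main obstacle} I anticipate is rigorously justifying the minimax exchange and the strong duality for the constrained inner maximization in the infinite-dimensional measure space, particularly ensuring that no duality gap arises for the metric-ball constraint — this requires checking a Slater-type condition (the ambiguity ball has nonempty relative interior, which holds since $\emp$ itself is feasible with slack when $\epsilon>0$) and appropriate topological regularity. For the Wasserstein part, an additional subtlety is handling the power $p$ versus $p$-th power of the radius and the possibility that the optimal $\gamma$ is attained (or the $\inf$ over $\gamma>0$ is not attained, in which case the reformulation is stated with $\inf$ rather than $\min$, which is consistent with the lemma statement). Once these duality facts are in place, the remaining manipulations — the Fenchel identity for $\epsilon\|f\|_\rkhs$, the AM-GM-type identity for the $\tau$-parametrization, and the $c$-transform bookkeeping — are routine and I would carry them out in that order after establishing the core duality.
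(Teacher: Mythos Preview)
Your proposal is correct and reaches the same reformulations, but the route for the MMD case differs from the paper's in one structural way that is worth noting. You dualize the MMD-ball constraint directly and then invoke the IPM representation $\mmd(\mu,\emp)=\sup_{\|h\|_\rkhs\le 1}\langle h,m_\mu-m_{\emp}\rangle$, after which the substitution $f=\gamma h$ (with $\gamma$ the multiplier) and the minimax swap yield \eqref{eq-kdro-dual-ap-2} first; the other two forms then follow from your identities $\inf_{\tau>0}\{\tfrac\tau2\|f\|_\rkhs^2+\tfrac{\epsilon^2}{2\tau}\}=\epsilon\|f\|_\rkhs$ and $\epsilon\|f\|_\rkhs=\sup_{\|h\|_\rkhs\le 1}\epsilon\langle h,f\rangle$. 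The paper instead \emph{lifts} the problem by introducing the kernel mean embedding as an auxiliary variable via the linear constraint $h=\int\phi(x)\,\dd\mu(x)$, attaches a multiplier $f\in\rkhs$ to this equality, and a multiplier $1/(2\tau)$ to the squared constraint $\mmd^2\le\epsilon^2$; since $\mmd(\mu,\emp)=\|h-\hat h\|_\rkhs$ the Lagrangian is quadratic in $h$, which is eliminated in closed form to land on \eqref{eq-kdro-dual-ap-1} first. For strong duality the paper appeals to infinite-dimensional linear-programming duality (citing \cite{zhu_kernel_2021}) rather than Sion's theorem. Your path is slightly more elementary and gets to the norm-penalized form \eqref{eq-kdro-dual-ap-2} in one step; the paper's lifting trick is what makes the connection to the quadratic-in-$\tau$ form \eqref{eq-kdro-dual-ap-1} natural and explains why the variable $h$ in \eqref{eq:kdro_smooth-ap} can be interpreted as the (normalized) kernel mean embedding rather than just a smoothing device. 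For the Wasserstein part the paper gives only a one-line sketch (``combine the dual Kantorovich representation with Lagrange duality''), which is exactly what you outline, so there is no meaningful difference there.
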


  \begin{proof}
  For this proof, it suffices to consider the case where $\theta$ is fixed, since only the inner maximization is reformulated using duality.
  We first prove the result for the MMD setting.

  \paragraph*{MMD setting.}
  We consider the kernel mean embedding map as a linear constraint
  \begin{align}
      \int {\phi(x)} \dd \mu(x) = h,
  \end{align}
  where $h$ is a function in $\rkhs$
  and $\phi(x):=k(x,\cdot)$ is a canonical feature function of $\rkhs$.
  Using the Lagrange multipliers $f \in \mathcal{H}$ for the above linear constraint and $1/(2\tau)$ for the inequality constraint $\mathrm{MMD}(\mu,\hat{\mu})^2 \leq \epsilon^2$, we obtain the Lagrangian
    \begin{align}
        \int l\ d \mu
              -
              \frac1{2\tau}
              \mathrm{MMD}^2(\mu, \hat\mu)
              +\frac{\epsilon^2}{2\tau}
                            + \langle{f} , { \int \phi\ d\mu -h} \rangle_{\mathcal H}.
    \end{align}
    Since $\mathrm{MMD}(\mu,\hat{\mu}) = \|h - \hat{h}\|_{\mathcal H}$, we arrive at the Lagrange saddle-point problem 
  \begin{equation}
      \begin{aligned}
          \inf_{\tau>0, f\in\rkhs}\sup_{\mu \in \mathcal M,h\in\rkhs} \left\{ { \int l\ d \mu}
              - \frac1{2\tau}
              \hnorm{h - \hat{h}}^2
              +\frac{\epsilon^2}{2\tau}
              - \hiprod{f}{h} 
              +\int \hiprod{f}{\phi} \dd \mu
              \right\}
          \end{aligned}
    \end{equation}
     where $\hat{h}$ is the kernel mean embedding of the empirical measure $\hat{P}_n=\frac 1n\sum_{i=1}^n \delta_{x_i}$.
     Carrying out the quadratic optimization problem w.r.t. $h$ in closed form, 
     we eliminate the variable $h$.
     Finally, using the reproducing property, and rearranging the terms, we obtain the result in~\eqref{eq-kdro-dual-ap-1}. 
     An optimal choice of the dual variable $\tau$
  yields the equivalent reformulation~\eqref{eq-kdro-dual-ap-2}.
  Smoothing via the definition of the dual norm in the Hilbert spaces, i.e., using that $\|f\|_\rkhs = \max_{h \in \rkhs: \|h\|_{\rkhs}\leq 1} \la h,f \ra$, we obtain \eqref{eq:kdro_smooth-ap}.
  Note that the strong duality holds due to the infinite-dimensional linear program duality; see \cite{zhu_kernel_2021} for an elementary proof of strong duality.

  \paragraph*{Wasserstein setting.}
  The derivaiton is similar to the MMD setting by combining the dual Kantorovich representation of OT~\eqref{label-kantorovich-dual} and Lagrange duality. 
  \end{proof}

\subsection{Background of Gradient Flow and Geodesic Convexity}
\label{sec:geod-cvx-w2}
Recall that a functional $\mathcal E$ defined on a Hilbert space $\rkhs$ is $\lambda$-convex if $\forall s \in[0,1], \forall u_0, u_1 \in \rkhs$,
\begin{align}
  \label{eq-convexity}
  \mathcal{E}((1-s) u_{0} + s u_1) \leq(1-s) \mathcal{E}(u_{0})+s \mathcal{E}(u_1)
  -\frac{\lambda}{2} s(1-s) \hnorm{u_0 - u_1}^2.
\end{align}
If $\lambda>0$, $\mathcal{E}$ is strongly convex.
This notion of convexity does not make sense in the Wasserstein space.

  Recent machine learning literature has explored \emph{Wasserstein gradient flow} (WGF).
  The 2-Wasserstein space $(\mathcal M, W_2)$ is geodesically convex, as defined below.

In a metric space \((\mathcal M, \mathcal{D})\) a curve \(\gamma:[0,1] \rightarrow\)
\(\mathcal M\) is a (constant speed) geodesic if
$$
\forall r, s \in[0,1]: \quad \mathcal{D}(\gamma(r), \gamma(s))=|s-r| \mathcal{D}(\gamma(0), \gamma(1))
.
$$
We refer to that as the geodesic \(\gamma\) connects the points \(\gamma(0)\) and \(\gamma(1)\) and write
    Geod \((\gamma(0), \gamma(1))\) for the set of all such geodesics.
  \begin{definition}
    [Geodesic metric spaces]
    The metric space \((\mathcal M, \mathcal{D})\) is a geodesic space, if for all \(u_{0}, u_{1} \in \mathcal M\) there exists
    a geodesic connecting \(u_{0}\) and \(u_{1}\).
\end{definition}

\begin{definition}
    [Geodesic convexity]
    A functional \(\mathcal{E}: \mathcal M \rightarrow \mathbb{R}_{\infty}\) is geodesically \(\lambda\)-convex if
$
\forall u_{0}, u_{1} \in  \mathcal M \;\; \exists \gamma \in \operatorname{Geod}\left(u_{0}, u_{1}\right)
$ s.t.
$$
\mathcal{E}(\gamma(s)) \leq(1-s) \mathcal{E}(\gamma(0))+s \mathcal{E}(\gamma(1))-\frac{\lambda}{2} s(1-s) \mathcal{D}(\gamma(0), \gamma(1))^{2}, \  \forall s \in[0,1].
$$
\end{definition}
As we have seen in the main text, e.g., discussions around Example~\ref{ex:wdro-not-work}, this complication of convexity structure makes optimization in the Wasserstein space more difficult than in general Hilbert spaces, which motivates our approach to work in the RKHS.

Next, we mention standard results for RKHSGF in the context of this paper for completeness.
First, we outline the existence, uniqueness, and a powerful result known as the \emph{evolutionary variational inequalities} \eviL.
Since an RKHS is a Hilbert space, Lemma~\ref{thm:exist-rkhsgf} is simply the \eviL in a Hilbert space, whose proof is standard~\citep{ambrosio2008gradient,santambrogio_optimal_2015,mielke_gradients_nodate}.

\begin{lemma}
    [Characterizations of RKHS gradient flow]
    \label{thm:exist-rkhsgf}
    Suppose the energy functional {$\mathcal V $} is proper, upper semicontinuous, $\lambda$-convex for some $\lambda\in\mathbb R$, and has compact sublevel sets.
    Then for any initial condition in the RKHS $f(0,x)\in \rkhs$, there exists a unique solution {to \eqref{eq-kgf}} at time $t$, $f(t)\in \rkhs$.
    
    Furthermore, the gradient flow solution $f(t,x)$ satisfies \pd{the following} \eviL, for $t,s\in [0, T]$: 
    \begin{align}
    \label{eq-evi}
     \frac{1}{2} \hnorm{f(t) - {h}}^2
     \leq 
     \frac{1}{2} \mathrm{e}^{-\lambda(t-s)}
                    \hnorm{f(s) - {h}}^2
     +M_\lambda(t-s)(
      {\mathcal V }({h}) - {\mathcal V }(f(t))
     )
     ,\quad \forall {h}\in \operatorname{dom}({\mathcal V })\subset\rkhs
  \end{align}
\end{lemma}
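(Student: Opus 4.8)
\textbf{Proof proposal for Lemma~\ref{thm:exist-rkhsgf}.}

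The plan is to invoke the classical theory of gradient flows of $\lambda$-convex functionals in Hilbert spaces, specializing the abstract results of \cite{ambrosio2008gradient} (see also \cite{santambrogio_optimal_2015,mielke_gradients_nodate}) to the case where the ambient Hilbert space is the RKHS $\rkhs$. Since $\rkhs$ is a Hilbert space, the metric $d(f,g) = \hnorm{f-g}$ is exactly the norm-induced metric, and the notions of $\lambda$-geodesic convexity and $\lambda$-convexity in \eqref{eq-convexity} coincide (the geodesics are straight line segments $s \mapsto (1-s)u_0 + su_1$). Thus the hypotheses ``$\mathcal V$ proper, upper semicontinuous (equivalently, lower semicontinuous after sign convention), $\lambda$-convex, with compact sublevel sets'' are precisely the standing assumptions under which the Minimizing Movement Scheme (MMS) / JKO-type discretization converges.

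First I would set up the MMS: for step size $\tau>0$ and initial point $f^0 \in \operatorname{dom}(\mathcal V)$, define recursively $f^{n+1}_\tau \in \argmin_{h \in \rkhs} \left\{ \mathcal V(h) + \frac{1}{2\tau}\hnorm{h - f^n_\tau}^2 \right\}$; the minimum is attained and unique because the functional $h \mapsto \mathcal V(h) + \frac{1}{2\tau}\hnorm{h-f^n_\tau}^2$ is, for $\tau < 1/\lambda^-$ (with $\lambda^- = \max\{-\lambda,0\}$), strongly convex, lower semicontinuous, and coercive with compact sublevel sets. Then I would cite the standard a priori estimates (discrete energy dissipation inequality, Cauchy estimates on the piecewise-constant and De Giorgi variational interpolants) which give, as $\tau \to 0$, a locally Lipschitz limit curve $f(t)$ that is the unique solution of the gradient flow \eqref{eq-kgf} in the sense of the Evolution Variational Inequality. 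Uniqueness follows from the $\lambda$-convexity: two EVI solutions $f_1(t), f_2(t)$ satisfy $\frac{d}{dt}\frac12\hnorm{f_1(t)-f_2(t)}^2 \le -\lambda \hnorm{f_1(t)-f_2(t)}^2$ by adding the two EVIs with cross test points, hence Gronwall forces them to coincide when they share an initial condition. For the \eviLref~\eqref{eq-evi} itself, I would derive it as the passage to the limit of the discrete EVI satisfied by the MMS interpolants, where $M_\lambda(r) := \int_0^r e^{\lambda s}\,\dd s = \frac{e^{\lambda r}-1}{\lambda}$ (interpreted as $r$ when $\lambda=0$); the sign/placement of the exponential factor $e^{-\lambda(t-s)}$ in front of $\hnorm{f(s)-h}^2$ matches the convention in \eqref{eq-evi}, and this is the exact statement of \cite[Theorem~4.0.4]{ambrosio2008gradient} transported to $\rkhs$.

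The main obstacle — really the only nontrivial point — is checking that the generation theorem genuinely applies: one must verify that $\rkhs$ being a Hilbert space makes it a ``nice'' metric space in the sense required (it is complete, it is a geodesic space with unique geodesics, and the squared distance is $2$-convex along geodesics, all automatic in a Hilbert space), and that the compact-sublevel-set hypothesis supplies the compactness needed to extract convergent subsequences of the MMS. Once these structural facts are in place, the rest is a verbatim citation. I would therefore keep the proof short: state that $\rkhs$ satisfies the hypotheses of \cite[Theorems~4.0.4 and 4.0.10]{ambrosio2008gradient}, and conclude existence, uniqueness, and the EVI \eqref{eq-evi} directly, referring the reader to \cite{ambrosio2008gradient,santambrogio_optimal_2015} for the detailed construction.
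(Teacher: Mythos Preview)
Your proposal is correct and matches the paper's approach: the paper does not give a detailed proof but simply observes that an RKHS is a Hilbert space and cites \cite{ambrosio2008gradient,santambrogio_optimal_2015,mielke_gradients_nodate} for the standard \eviL theory, which is exactly what you do (with additional, accurate detail on the MMS construction and the structural checks). One minor point: your definition $M_\lambda(r) = \int_0^r e^{\lambda s}\,\dd s$ differs in sign from the paper's $M_\lambda(\tau) = \int_0^\tau e^{-\lambda(\tau-s)}\,\dd s = \int_0^\tau e^{-\lambda u}\,\dd u$, so be sure to align your sign convention with the one used in \eqref{eq-evi}.
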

where $  M_\lambda(\tau) = \int _0^\tau e^{-\lambda (\tau -s )}\dd s$.
Using \eviL, we can effortlessly extract convergence results.
Suppose a minimizer  $f^* \in  \inf_{f\in \mathcal H} {\mathcal V }(f)$ of the energy exists, we set ${h} = f^*, s=0$ in \eviL and obtain
\begin{align}
   \hnorm{f(t) - f^*}^2
  \leq 
   \mathrm{e}^{-\lambda t}
                 \hnorm{f(0) - f^*}^2
  +2M_\lambda {t}
  \left(
    \inf_{f\in \rkhs}{\mathcal V }(f)
    -
    {\mathcal V }(f(t))
  \right)
  \leq 
   \mathrm{e}^{-\lambda t}
                 \hnorm{f(0) - f^*}^2
  ,
\end{align}
yielding an \emph{exponential convergence in time} if the energy ${\mathcal V }$ is strongly convex (in the usual sense) w.r.t. the $f$ variable,
i.e., $\lambda >0$.
Note that the convexity condition can be further weakened using functional inequalities such as the \L{}ojasiewicz inequality.

A time-discretization of the RKHS gradient flow~\eqref{eq-kgf} using the standard \emph{minimizing movement scheme} (MMS) in the gradient flow literature, is as follows
\begin{align}
    \label{eq-mms-rkhs}
\hat{f}^{k+1}
    \in
        \arg\inf_{f\in\rkhs} \left\{ 
        {\mathcal V }(f)
            + \frac1{2\tau}
            \hnorm{f - \hat{f}^k}^2
            \right\}
            ,
            \\
            \hat{f}^0(x) = f(0,x)\in\rkhs
            ,
        \end{align}
where $\tau>0$ is the step size for time-discretization.
Defining the piecewise constant function
$\bar{f}_\tau(t,x) : = \hat{f}^{k}(x)$ for $t\in [k\tau, k\tau+\tau]$,
    standard PDE proofs (see, e.g., \cite{ambrosio2008gradient}) guarantee that $\bar{f}_\tau$ converges to the continuous-time RKHSGF solution, i.e.,
    $\bar{f}_\tau\to f(t)\textrm{ as }\tau\to 0$.
The fully-implicit Euler discretization is difficult to implement in practice.
In the next section, we will use the explicit step, coupled with measure-update step.



\section{THEORETICAL ANALYSIS OF PRIMAL-DUAL KERNEL MIRROR PROX IN SECTION \ref{S:MP}}

For the sake of generality, in particular, to cover the DRO problem \eqref{eq:kdro_smooth}, we consider a more general saddle-point problem. Theorems \ref{Th:ideal_MP_determ} and \ref{Th:ideal_MP_stoch} are obtained as corollaries of the results obtained in this section. To make this section self-contained and for the reader's convenience we repeat some definitions and results given in the main text.

We consider generic variable $x \in \mathbb{R}^n$ with domain $\mathcal{X}$. We denote by $ \mathcal M$  the set of all probability measures on $\mathcal{X}$ that admit densities w.r.t. the Lebesgue measure and the density is continuous and positive almost everywhere on $\mathcal{X}$.
We also assume that there are two Hilbert spaces $\rkhs_f, \rkhs_h$, a convex set $\Theta \subseteq \mathbb{R}^d$, and convex a compact $H \in \rkhs_h$.
We consider the following general infinite-dimensional saddle-point problem
\begin{align}
\label{eq:SPP_general_app}
    \inf_{\theta \in \Theta \subseteq \mathbb{R}^d, f (x) \in \rkhs_f}
    \sup_{\mu\in \mathcal M, h (x) \in H \subseteq \rkhs_h}
    \quad F(\theta,f,\mu,h).
\end{align}
For shortness, we denote the set of all variables by $u=(\theta,f,\mu,h)$ and slightly abuse notation to define $F(u)=F(\theta,f,\mu,h)$.

Our first main assumption is as follows.
\begin{assumption}
\label{Asm:SPP_gen_convexity_app}
The functional $F(\theta,f,\mu,h)$ is convex in $(\theta,f)$ for fixed $(\mu,h)$ and concave in $(\mu,h)$ for fixed $(\theta,f)$. 
\end{assumption}

\subsection{Preliminaries}
To construct the mirror prox algorithm for problem \eqref{eq:SPP_general_app} we need to first introduce proximal setup, which consists of norms, their dual, and Bregman divergences on each space of the variables. 

For the space of the variable $\theta$, we introduce the standard proximal setup with the Euclidean norm $\|\cdot \|_2$,  distance-generating function $d_{\theta}(\theta) = \frac{1}{2}\|\theta\|_2^2$, which gives Bregman divergence $B_{\theta}(\theta,\Breve{\theta})=\frac{1}{2}\|\theta-\Breve{\theta}\|_2^2$. This leads to the mirror step defined as
\begin{equation}
    \theta_+ = \MD^{\theta,\Theta}_{\eta}(\theta_0, \xi_\theta) = \arg \min_{\tilde{\theta} \in \Theta} \{ \langle \tilde{\theta}, \eta\xi_\theta \rangle + \frac{1}{2}\|\tilde{\theta} - \theta_0\|_2^2\}.
\end{equation}
We note that our choice of the Euclidean proximal setup is made for simplicity and that other standard proximal setups as in  \cite{nemirovski_robust_2009} are possible.

For the space of the variable $f$, we use the norm of the Hilbert space $\|\cdot\|_{\rkhs_f}$, distance generating function $d_{f}(f) = \frac{1}{2}\|f\|_{\rkhs_f}^2$, which gives Bregman divergence $B_{\rkhs_f}(f,\Breve{f})=\frac{1}{2}\|f-\Breve{f}\|_{\rkhs_f}^2$. This leads to the mirror step
\begin{equation}
    f_+ = \MD^{f,{\rkhs_f}}_{\eta}(f_0,  \xi_f) = \arg \min_{\tilde{f}} \{ \langle \tilde{f}, \eta \xi_f \rangle + \frac{1}{2}\|\tilde{f} - f_0\|_{\rkhs_f}^2\} = f_0 - \eta\xi_f.
\end{equation}
Note that if $\xi_f =  \mathcal V '( f)$ (the Fr\'echet differential of the energy $\mathcal V$), the above equation is equivalent to $\frac{1}{\tau} (f_+-f) = - \mathcal V '( f)$, which can be seen as a time discretization of \eqref{eq-kgf-2-main}.

For the space of the variable $\mu$, we follow \citet{hsieh2019finding} and, first, introduce the Total Variation norm for the elements of $\mathcal{M}$
\[
\| \mu \|_{TV} = \sup_{\|\xi\|_{L^{\infty}}\leq 1} \int \xi d\mu = \sup_{\|\xi\|_{L^{\infty}}\leq 1} \la \xi ,\mu\ra,
\]
where $\|\xi\|_{L^{\infty}}$ is the $L^{\infty}$-norm of functions. 
To define the mirror step, we use (negative) Shannon entropy 
\begin{equation}
    \Phi(\mu) = \int d\mu \ln \frac{d\mu}{dx}
\end{equation}
and its Fenchel dual
\begin{equation}
    \Phi^*(\xi) =  \ln \int e^\xi dx
\end{equation}
defined for $\xi$ from the space $\mathcal{M}^*$ of all bounded integrable functions on $\mathcal{X}$. The corresponding Bregman divergence is the relative entropy given by  
\begin{equation}
    D_\Phi(\mu,\breve{\mu}) =  \int d\mu \ln \frac{d\mu}{ d\breve{\mu}}.
\end{equation}
This leads to the mirror step \citep[Theorem 1]{hsieh2019finding}
\begin{equation}
    \mu_+ = \MD^{\mu}_{\eta}(\mu_0,  \xi_\mu) = d \Phi^*(d \Phi(\mu_0) - \eta \xi_\mu) \quad  \equiv  \quad d\mu_+ = \frac{e^{-\eta \xi_\mu}d\mu_0}{\int e^{-\eta \xi_\mu}d\mu_0}.
\end{equation}
This step can be seen as a time discretization of the dynamics \eqref{eq-react-eq} as outlined in the discussion around \eqref{eq:md-intro}.

Finally, for the space of the variable $h$, we do the same as for the variable $f$. Namely, we use the distance generating function $d_{h}(h) = \frac{1}{2}\|h\|_{\rkhs_h}^2$, which gives Bregman divergence $B_{\rkhs_h}(h,\Breve{h})=\frac{1}{2}\|h-\Breve{h}\|_{\rkhs_h}^2$. This leads to the mirror step
\begin{equation}
    h_+ = \MD^{h,H}_{\eta}(h_0,  \xi_h) = \arg \min_{\tilde{h}\in H} \{ \langle \tilde{h}, \eta \xi_h \rangle + \frac{1}{2}\|\tilde{h} - h_0\|_{\rkhs_h}^2\} .
\end{equation}

Our second main assumption is as follows
\begin{assumption}
\label{Asm:SPP_gen_smooth_app}
The functional $F(\theta,f,\mu,h)$  is (Fr\'echet) differentiable in
$\theta$ w.r.t. the Euclidean norm, in $f$ w.r.t. the RKHS norm, in $\mu$ w.r.t.  $L^2$, and in $h$ w.r.t. the RKHS norm. Furthermore, we assume that the derivatives are Lipschitz continuous in the following sense
\begin{align}
&\|F'_\theta(u)-F'_\theta(\tilde{u})\|_2 \leq L_{\theta\theta}\|\theta-\tilde{\theta}\|_2 +  L_{\theta f}\|f-\tilde{f}\|_{\mathcal{H}_f} +  L_{\theta\mu}\|\mu-\tilde{\mu}\|_{TV} +   L_{\theta h}\|h-\tilde{h}\|_{\mathcal{H}_h}, \\
&\|F'_f(u)-F'_f(\tilde{u})\|_{\mathcal{H}_f} \leq L_{f\theta}\|\theta-\tilde{\theta}\|_2 +  L_{f f}\|f-\tilde{f}\|_{\mathcal{H}_f} +  L_{f\mu}\|\mu-\tilde{\mu}\|_{TV} +   L_{f h}\|h-\tilde{h}\|_{\mathcal{H}_h}, \\
&\|F'_\mu(u)-F'_\mu(\tilde{u})\|_{L^{\infty}} \leq L_{\mu\theta}\|\theta-\tilde{\theta}\|_2 +  L_{\mu f}\|f-\tilde{f}\|_{\mathcal{H}_f} +  L_{\mu\mu}\|\mu-\tilde{\mu}\|_{TV} +   L_{\mu h}\|h-\tilde{h}\|_{\mathcal{H}_h}, \\
&\|F'_h(u)-F'_h(\tilde{u})\|_{\mathcal{H}_h} \leq L_{h\theta}\|\theta-\tilde{\theta}\|_2 +  L_{h f}\|f-\tilde{f}\|_{\mathcal{H}_f} +  L_{h\mu}\|\mu-\tilde{\mu}\|_{TV} +   L_{h h}\|h-\tilde{h}\|_{\mathcal{H}_h}.
\end{align}
\end{assumption}
We also denote
\begin{equation}
\label{eq:L_general_app}
    L = \max_{\kappa_1,\kappa_2\in \{\theta,f,\mu,h\}}\{L_{\kappa_1\kappa_2} \}.
\end{equation}

\subsection{Mirror Prox Algorithm and Its Analysis}

The updates of the general infinite-dimensional Mirror Prox algorithm\footnote{We believe that the Dual Averaging algorithm \cite{nesterov2007dual} can be extended to our setting in a similar fashion as it was done in \cite{dvurechensky2015primal-dual} for saddle-point problems in Hilbert spaces.} for problem \eqref{eq:SPP_general_app} are given in Algorithm \ref{alg:mirror_prox_gen_app}.
\begin{algorithm}[ht!]
\caption{Ideal General Mirror-Prox}
{
\begin{algorithmic}[1]\label{alg:mirror_prox_gen_app}
    \REQUIRE{ Initial guess $\tilde{u}_0=(\tilde{\theta}_0,\tilde{f}_0,\tilde{\mu}_0,\tilde{h}_0)$, step-sizes $\eta_\theta,\eta_f,\eta_\mu,\eta_h >0$.}
    \FOR{$k = 0, 1, \ldots, N - 1$}
        \STATE{Compute for $\tilde{u}_k=(\tilde{\theta}_k,\tilde{f}_k,\tilde{\mu}_k,\tilde{h}_k)$
            \begin{align*}
                & \theta_{k} = \MD^{\theta,\Theta}_{\eta_\theta}(\tilde{\theta}_k, F'_\theta(\tilde{u}_k)),  \qquad
                & f_{k} = \MD^{f,\mathcal{H}_f}_{\eta_f}(\tilde{f}_k,F'_f(\tilde{u}_k)),\\
                &\mu_{k} = \MD^{\mu}_{\eta_\mu}(\tilde{\mu}_k, -F'_\mu(\tilde{u}_k)), \qquad 
                & h_{k} = \MD^{h,H}_{\eta_h}(\tilde{h}_k, -F'_h(\tilde{u}_k)).
            \end{align*}  
            }
        \STATE{Compute for $ {u}_k=( {\theta}_k, {f}_k, {\mu}_k, {h}_k)$
            \begin{align*}
                & \tilde{\theta}_{k+1} = \MD^{\theta,\Theta}_{\eta_\theta}(\tilde{\theta}_k, F'_\theta(u_k)),  \qquad
                & \tilde{f}_{k+1} = \MD^{f,\mathcal{H}_f}_{\eta_f}(\tilde{f}_k,F'_f(u_k)),\\
                &\tilde{\mu}_{k+1} = \MD^{\mu}_{\eta_\mu}(\tilde{\mu}_k, -F'_\mu(u_k)), \qquad 
                & \tilde{h}_{k+1} = \MD^{h,H}_{\eta_h}(\tilde{h}_k, -F'_h(u_k)).
            \end{align*} 
            }
    \ENDFOR
    \STATE Compute $(\bar{\theta}_N,\bar{f}_N,\bar{\mu}_N,\bar{h}_N)=\bar{u}_N = \frac{1}{N}\sum_{k=0}^{N-1} u_k = \frac{1}{N}\sum_{k=0}^{N-1}( {\theta}_k, {f}_k, {\mu}_k, {h}_k)$.
\end{algorithmic}
}
\end{algorithm}

For the analysis of the mirror prox algorithm, we need the following auxiliary results. The first one is used for the mirror steps applied to the variables $\theta,f,h$.

\begin{lemma}
\label{Lm:rkhs_step_app}
Let $\mathcal H$ be (possibly finite-dimensoinal) Hilbert space and let $H\subset \mathcal H$ be convex and closed. Let $\tilde{h} \in H$ and $\xi, \tilde{\xi} \in \mathcal H^*=\mathcal H$, $\eta >0$, and 
\begin{align}
    h&= \arg \min_{\hat{h}\in H} \left\{\la \hat{h}, \eta \xi   \ra + \frac{1}{2}\|\tilde{h}-\hat{h}\|_{\mathcal{H}}^2\right\} = \MD^{h,H}_{\eta}(\tilde{h},  \xi), \label{eq:Lm:rkhs_step_1_app}\\ 
    \tilde{h}_+&= \arg \min_{\hat{h}\in H} \left\{\la \hat{h}, \eta \tilde{\xi}  \ra + \frac{1}{2}\|\tilde{h}-\hat{h}\|_{\mathcal{H}}^2\right\} = \MD^{h,H}_{\eta}(\tilde{h},  \tilde{\xi}). \label{eq:Lm:rkhs_step_2_app}
\end{align}
Then, for any $\hat{h} \in H$
\begin{equation}
    \la h - \hat{h}, \eta \tilde{\xi} \ra \leq 
    \frac{1}{2}\|\hat{h}-\tilde{h}\|_{\mathcal{H}}^2 
    - \frac{1}{2}\|\hat{h}-\tilde{h}_+\|_{\mathcal{H}}^2 
    + \frac{\eta^2}{2}\|\tilde{\xi}-\xi\|_{\mathcal{H}}^2 
    - \frac{1}{2}\|h-\tilde{h}\|_{\mathcal{H}}^2.
\end{equation}
\end{lemma}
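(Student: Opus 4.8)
The plan is to prove the three-point inequality for the mirror (here: proximal/projection) step in a general Hilbert space, which is the standard workhorse lemma behind Mirror Prox. First I would record the variational (first-order optimality) characterizations of the two minimizers. Since $d(h)=\tfrac12\|h\|_{\mathcal H}^2$ is smooth with gradient equal to the identity, the minimizer $h$ of $\langle \hat h,\eta\xi\rangle+\tfrac12\|\tilde h-\hat h\|_{\mathcal H}^2$ over the convex closed set $H$ is characterized by
\begin{equation*}
\langle \eta\xi + (h-\tilde h),\, \hat h - h\rangle \ge 0 \qquad \forall \hat h\in H,
\end{equation*}
and analogously $\langle \eta\tilde\xi + (\tilde h_+-\tilde h),\, \hat h - \tilde h_+\rangle \ge 0$ for all $\hat h\in H$.

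Next, I would combine these. Applying the second inequality with the test point $\hat h = h\in H$ gives $\langle \eta\tilde\xi,\, h-\tilde h_+\rangle \ge \langle \tilde h_+ - \tilde h,\, \tilde h_+ - h\rangle$, i.e.\ a lower bound on $\langle \tilde h_+-\tilde h, \tilde h_+-h\rangle$. Applying the first inequality with a generic $\hat h\in H$ gives $\langle \eta\xi,\, h-\hat h\rangle \le \langle h-\tilde h,\, \hat h - h\rangle$. Now write $\langle h-\hat h,\eta\tilde\xi\rangle = \langle h-\hat h,\eta\xi\rangle + \langle h-\hat h,\eta(\tilde\xi-\xi)\rangle$. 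The first term is bounded using the first optimality condition, the cross term $\langle h-\hat h,\eta(\tilde\xi-\xi)\rangle$ is split further as $\langle h-\tilde h_+,\eta(\tilde\xi-\xi)\rangle + \langle \tilde h_+-\hat h,\eta(\tilde\xi-\xi)\rangle$, and the $\tilde h_+$-containing pieces are controlled via the second optimality condition. Throughout I would use the three-point identity $\langle a-b,a-c\rangle = \tfrac12\|a-b\|^2 + \tfrac12\|a-c\|^2 - \tfrac12\|b-c\|^2$ to convert inner products of differences into sums of squared norms, producing the telescoping terms $\tfrac12\|\hat h-\tilde h\|^2 - \tfrac12\|\hat h-\tilde h_+\|^2$ and the negative term $-\tfrac12\|h-\tilde h\|^2$.

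The one genuinely quantitative step is bounding the leftover cross term involving $\tilde\xi-\xi$: after the manipulations above one is left with something like $\langle \tilde h_+ - h,\, \eta(\tilde\xi-\xi)\rangle$ minus a positive multiple of $\|\tilde h_+-h\|^2$ (coming from the three-point identity applied to the second optimality condition), and Young's inequality $\langle a,b\rangle \le \tfrac12\|a\|^2+\tfrac12\|b\|^2$ applied with the right scaling converts $\eta\langle \tilde h_+-h,\tilde\xi-\xi\rangle$ into $\tfrac{\eta^2}{2}\|\tilde\xi-\xi\|_{\mathcal H}^2 + \tfrac12\|\tilde h_+-h\|^2$, the latter being absorbed by the negative square term. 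Collecting everything yields exactly
\begin{equation*}
\langle h-\hat h,\eta\tilde\xi\rangle \le \tfrac12\|\hat h-\tilde h\|_{\mathcal H}^2 - \tfrac12\|\hat h-\tilde h_+\|_{\mathcal H}^2 + \tfrac{\eta^2}{2}\|\tilde\xi-\xi\|_{\mathcal H}^2 - \tfrac12\|h-\tilde h\|_{\mathcal H}^2 .
\end{equation*}

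The main obstacle is purely bookkeeping rather than conceptual: one must choose the decomposition of the cross terms and the constant in Young's inequality so that the spare $\|\tilde h_+ - h\|^2$ (or $\|h-\tilde h_+\|^2$) term is exactly cancelled and the final negative term is $-\tfrac12\|h-\tilde h\|^2$ and not something weaker. Since $\mathcal H$ is a genuine Hilbert space (no Bregman subtleties, the ``mirror map'' being the identity) there are no regularity or duality issues — the dual of $\mathcal H$ is identified with $\mathcal H$ — so the proof is essentially the classical finite-dimensional Mirror Prox lemma with $B(\cdot,\cdot)=\tfrac12\|\cdot-\cdot\|^2$, carried over verbatim.
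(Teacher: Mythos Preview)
Your plan is essentially the paper's proof: first-order optimality for both proximal steps, the three-point identity $\langle a-b,c-a\rangle=\tfrac12\|c-b\|^2-\tfrac12\|a-b\|^2-\tfrac12\|c-a\|^2$, and Young's inequality on the residual $\langle h-\tilde h_+,\eta(\tilde\xi-\xi)\rangle$, whose $\tfrac12\|h-\tilde h_+\|^2$ byproduct is absorbed by an available negative square. One concrete correction, however: you have the roles of the two optimality conditions swapped. To obtain the telescoping term $-\tfrac12\|\hat h-\tilde h_+\|_{\mathcal H}^2$ (which is the whole point of the lemma), the optimality condition for $\tilde h_+$ must be applied with the \emph{generic} test point $\hat h$, not with $h$; conversely, the optimality condition for $h$ should be applied with the \emph{specific} test point $\tilde h_+$. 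With your assignment (first condition generic, second at $h$), the three-point identity produces $-\tfrac12\|\hat h-h\|^2$ instead of $-\tfrac12\|\hat h-\tilde h_+\|^2$, and the leftover piece $\langle \tilde h_+-\hat h,\eta(\tilde\xi-\xi)\rangle$ involves the arbitrary $\hat h$ and cannot be absorbed. The paper's decomposition is
\[
\langle h-\hat h,\eta\tilde\xi\rangle=\langle \tilde h_+-\hat h,\eta\tilde\xi\rangle+\langle h-\tilde h_+,\eta\xi\rangle+\langle h-\tilde h_+,\eta(\tilde\xi-\xi)\rangle,
\]
and the three summands are bounded, respectively, by the second optimality condition at generic $\hat h$, the first at $\tilde h_+$, and Young's inequality; the $\pm\tfrac12\|\tilde h_+-\tilde h\|^2$ and $\pm\tfrac12\|h-\tilde h_+\|^2$ terms then cancel exactly, leaving the stated bound. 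Aside from this swap, your outline matches the paper.
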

\begin{proof}
By the optimality condition in \eqref{eq:Lm:rkhs_step_2_app}, we have for all $\hat{h} \in H$
\begin{equation}
    \la \eta \tilde{\xi}-(\tilde{h}-\tilde{h}_+), \hat{h}-\tilde{h}_+\ra \geq 0.
\end{equation}
Rearranging, we obtain
\begin{equation}
\label{eq:Lm:rkhs_step_proof_1}
\la \tilde{h}_+ - \hat{h}, \eta \tilde{\xi} \ra \leq \la \tilde{h}_+ - \hat{h}, \tilde{h}-\tilde{h}_+ \ra = -\frac{1}{2} \|\tilde{h}_+-\tilde{h}\|_\rkhs^2-\frac{1}{2}\|\hat{h}-\tilde{h}_+\|_\rkhs^2+\frac{1}{2}\|\hat{h}-\tilde{h}\|_\rkhs^2.
\end{equation}
In the same way, by the optimality condition in \eqref{eq:Lm:rkhs_step_1_app}, we have for all $\hat{h} \in H$, and, in particular for $\tilde{h}_+$
\begin{equation}
    \la \eta \xi-(\tilde{h}-h), \tilde{h}_+-h\ra \geq 0.
\end{equation}
Rearranging, we obtain
\begin{equation}
\la h - \tilde{h}_+, \eta  \xi \ra \leq \la h-\tilde{h}_+, \tilde{h}-h \ra = -\frac{1}{2} \|h-\tilde{h}\|_\rkhs^2-\frac{1}{2}\|\tilde{h}_+-h\|_\rkhs^2+\frac{1}{2}\|\tilde{h}_+-\tilde{h}\|_\rkhs^2.
\end{equation}
Combining the last inequality with \eqref{eq:Lm:rkhs_step_proof_1} and using the Fenchel inequality, we obtain
\begin{align}
 \la h - \hat{h}, \eta  \tilde{\xi} \ra &=  \la \tilde{h}_+ - \hat{h}, \eta \tilde{\xi} \ra + \la h - \tilde{h}_+, \eta  \xi \ra + \la h - \tilde{h}_+, \eta  (\tilde{\xi}-\xi) \ra \\
& \leq -\frac{1}{2} \|\tilde{h}_+-\tilde{h}\|_\rkhs^2-\frac{1}{2}\|\hat{h}-\tilde{h}_+\|_\rkhs^2+\frac{1}{2}\|\hat{h}-\tilde{h}\|_\rkhs^2 \\
& -\frac{1}{2} \|h-\tilde{h}\|_\rkhs^2-\frac{1}{2}\|\tilde{h}_+-h\|_\rkhs^2+\frac{1}{2}\|\tilde{h}_+-\tilde{h}\|_\rkhs^2 \\
& + \frac{\eta^2}{2}\|\tilde{\xi}-\xi\|_{\mathcal{H}}^2 
    + \frac{1}{2}\|h-\tilde{h}\|_{\mathcal{H}}^2,
\end{align}
which gives the result of the Lemma.
\end{proof}

The second result characterizes the mirror step with respect to the measure $\mu$. 

\begin{lemma}[{\cite{hsieh2019finding} [Lemma 5]}]
\label{Lm:measure_step_app}
Let $\tilde{\mu} \in \mathcal M$ and $\xi, \tilde{\xi} \in  \mathcal{M}^*$, $\eta >0$, and 
\begin{align}
    \mu&= \MD^{\mu}_{\eta}(\tilde{\mu},  \xi), \\ 
    \tilde{\mu}_+&= \MD^{\mu}_{\eta}(\tilde{\mu},  \tilde{\xi}).
\end{align}
Then, for any $\hat{\mu} \in \mathcal M$
\begin{equation}
    \la \mu - \hat{\mu}, \eta \tilde{\xi} \ra \leq 
    D_{\Phi}(\hat{\mu},\tilde{\mu}) 
    - D_{\Phi}(\hat{\mu},\tilde{\mu}_+)
    + \frac{\eta^2}{8}\|\tilde{\xi}-\xi\|_{L^{\infty}}^2 
    - 2\|\mu-\tilde{\mu}\|_{TV}^2.
\end{equation}
\end{lemma}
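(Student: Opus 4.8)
The statement to prove is Lemma~\ref{Lm:measure_step_app}, which is the measure-space analogue of Lemma~\ref{Lm:rkhs_step_app}, and is already attributed to \citet{hsieh2019finding}. The plan is to mimic the structure of the proof of Lemma~\ref{Lm:rkhs_step_app}, but replacing the Hilbert-space Bregman divergence $\frac12\|\cdot\|_{\mathcal H}^2$ with the relative entropy $D_\Phi$, and using the three-point identity for Bregman divergences in place of the elementary expansion of $\|a-b\|^2$. Concretely, I would first write down the optimality (variational) condition for the second mirror step $\tilde\mu_+ = \MD^\mu_\eta(\tilde\mu,\tilde\xi)$: for all $\hat\mu \in \mathcal M$,
\[
\la \eta\tilde\xi + \nabla\Phi(\tilde\mu_+) - \nabla\Phi(\tilde\mu),\ \hat\mu - \tilde\mu_+\ra \geq 0,
\]
which, via the three-point identity $\la \nabla\Phi(\tilde\mu_+)-\nabla\Phi(\tilde\mu),\ \hat\mu-\tilde\mu_+\ra = D_\Phi(\hat\mu,\tilde\mu) - D_\Phi(\hat\mu,\tilde\mu_+) - D_\Phi(\tilde\mu_+,\tilde\mu)$, rearranges to
\[
\la \tilde\mu_+ - \hat\mu,\ \eta\tilde\xi\ra \leq D_\Phi(\hat\mu,\tilde\mu) - D_\Phi(\hat\mu,\tilde\mu_+) - D_\Phi(\tilde\mu_+,\tilde\mu).
\]
Similarly, the optimality condition for the first step $\mu = \MD^\mu_\eta(\tilde\mu,\xi)$, applied at the point $\tilde\mu_+$, gives
\[
\la \mu - \tilde\mu_+,\ \eta\xi\ra \leq D_\Phi(\tilde\mu_+,\tilde\mu) - D_\Phi(\tilde\mu_+,\mu) - D_\Phi(\mu,\tilde\mu).
\]

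The next step is to add the two inequalities after inserting the telescoping cross term, exactly as in Lemma~\ref{Lm:rkhs_step_app}:
\[
\la \mu - \hat\mu,\ \eta\tilde\xi\ra = \la \tilde\mu_+ - \hat\mu,\ \eta\tilde\xi\ra + \la \mu - \tilde\mu_+,\ \eta\xi\ra + \la \mu - \tilde\mu_+,\ \eta(\tilde\xi - \xi)\ra.
\]
The first two terms are bounded by the sums above; the $D_\Phi(\tilde\mu_+,\tilde\mu)$ contributions cancel, leaving $-D_\Phi(\tilde\mu_+,\mu) - D_\Phi(\mu,\tilde\mu)$ plus the telescoping divergences $D_\Phi(\hat\mu,\tilde\mu) - D_\Phi(\hat\mu,\tilde\mu_+)$. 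For the cross term I would use a Fenchel–Young-type inequality adapted to the entropy geometry: by Hölder, $\la \mu-\tilde\mu_+,\ \eta(\tilde\xi-\xi)\ra \leq \eta\|\mu - \tilde\mu_+\|_{TV}\|\tilde\xi-\xi\|_{L^\infty}$, and then $\|\mu-\tilde\mu_+\|_{TV} \le \|\mu-\tilde\mu\|_{TV}+\|\tilde\mu_+-\tilde\mu\|_{TV}$ together with Pinsker's inequality $\|\cdot\|_{TV}^2 \le \tfrac12 D_\Phi$ to absorb the quadratic terms into $-D_\Phi(\tilde\mu_+,\mu)-D_\Phi(\mu,\tilde\mu)$. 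Balancing constants via $ab \le \tfrac12(\alpha a^2 + \alpha^{-1}b^2)$ should yield the stated coefficients $\tfrac{\eta^2}{8}$ on $\|\tilde\xi-\xi\|_{L^\infty}^2$ and $-2\|\mu-\tilde\mu\|_{TV}^2$ on the remaining negative term.

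The main obstacle I anticipate is being careful about the precise constants and about the validity of the three-point identity and the first-order optimality condition in this infinite-dimensional, entropy-regularized setting — in particular justifying that $\nabla\Phi(\mu) = \ln\frac{d\mu}{dx}$ is the right subgradient object, that $\mu$ and $\tilde\mu_+$ remain in the admissible class of densities (continuous, positive a.e.) so that $D_\Phi$ is finite and the identities hold, and that the Pinsker-type bound combines with the Hölder step to give exactly the $\tfrac18$ and $2$ constants rather than something looser. Since the result is quoted verbatim from \citet[Lemma 5]{hsieh2019finding}, the cleanest route for the paper is to simply cite that reference; if a self-contained proof is desired, the computation above is routine once the three-point identity and Pinsker's inequality are invoked, and the only genuine care needed is in the constant bookkeeping of the final Young-inequality split.
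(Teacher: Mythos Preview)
The paper does not provide a proof of this lemma at all: both in the main text (Lemma~\ref{Lm:measure_step}) and in the appendix (Lemma~\ref{Lm:measure_step_app}) the result is simply quoted from \citet[Lemma~5]{hsieh2019finding} with no argument given. So there is no ``paper's own proof'' to compare against; the paper's approach \emph{is} the citation, which you yourself identify as the cleanest route.

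Your self-contained sketch is the correct standard argument and mirrors the proof of Lemma~\ref{Lm:rkhs_step_app}: first-order optimality conditions for the two mirror steps, the Bregman three-point identity in place of the polarization identity, and a H\"older/Young split of the cross term $\la \mu-\tilde\mu_+,\eta(\tilde\xi-\xi)\ra$ combined with Pinsker to absorb $\|\mu-\tilde\mu_+\|_{TV}^2$ into $-D_\Phi(\tilde\mu_+,\mu)$ and to convert $-D_\Phi(\mu,\tilde\mu)$ into $-2\|\mu-\tilde\mu\|_{TV}^2$. Two small remarks: (i) the triangle-inequality detour $\|\mu-\tilde\mu_+\|_{TV}\le\|\mu-\tilde\mu\|_{TV}+\|\tilde\mu_+-\tilde\mu\|_{TV}$ is unnecessary and would only loosen constants --- apply Pinsker directly to $D_\Phi(\tilde\mu_+,\mu)$; (ii) your caution about constants is warranted: whether Pinsker reads $D_\Phi\ge 2\|\cdot\|_{TV}^2$ or $D_\Phi\ge \tfrac12\|\cdot\|_{TV}^2$ depends on whether $\|\cdot\|_{TV}$ denotes $\sup_A|\mu(A)-\nu(A)|$ or the dual-of-$L^\infty$ norm used in this paper, and the stated coefficients $\tfrac{\eta^2}{8}$ and $-2$ are tied to the convention in \citet{hsieh2019finding}, which you should verify matches the one adopted here before reproducing the computation.
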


The following result gives the convergence rate of Algorithm \ref{alg:mirror_prox_gen_app}.
\begin{theorem} 
\label{Th:ideal_MP_determ_app}
Let Assumptions \ref{Asm:SPP_gen_convexity_app}, \ref{Asm:SPP_gen_smooth_app} hold. Let also the step sizes in Algorithm \ref{alg:mirror_prox_gen_app} satisfy $\eta_\theta=\eta_f=\eta_\mu=\eta_h = \frac{1}{16L}$, where $L$ is defined in \eqref{eq:L_general_app}. Then, for any compact set $U = U_\theta \times U_f \times U_\mu \times U_h \subseteq \Theta \times \mathcal{H}_f \times \mathcal{M} \times H$, the sequence $(\bar{\theta}_N,\bar{f}_N,\bar{\mu}_N,\bar{h}_N)$ generated by Algorithm \ref{alg:mirror_prox_gen_app} satisfies
\begin{align*}
&\max_{\mu \in U_\mu, h \in U_h} F(\bar{\theta}_N,\bar{f}_N,\mu,h) - \min_{\theta \in U_\theta, f \in U_f}F(\theta,f,\bar{\mu}_N,\bar{h}_N) \\
    & \leq \frac{8L}{N}\max_{u \in U} \left(\|\theta-\tilde{\theta}_0\|_2^2 + \|f-\tilde{f}_0\|_{\mathcal{H}_f}^2 + 2 D_{\Phi}(\mu,\tilde{\mu}_0)  +  \|h-\tilde{h}_0\|_{\mathcal{H}_h}^2 \right).
\end{align*} 
\end{theorem}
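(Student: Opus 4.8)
The plan is to run the classical Mirror Prox regret argument of \citet{nemirovski_robust_2009}, carried out block by block over the four variables $\theta,f,\mu,h$, and then to average the iterates. Fix an arbitrary comparator $\hat u=(\hat\theta,\hat f,\hat\mu,\hat h)\in U$. At iteration $k$ I would apply Lemma~\ref{Lm:rkhs_step_app} three times --- to $(\tilde\theta_k,\theta_k,\tilde\theta_{k+1})$, $(\tilde f_k,f_k,\tilde f_{k+1})$ and $(\tilde h_k,h_k,\tilde h_{k+1})$ --- with ``predictor'' direction $\xi=F'_\bullet(\tilde u_k)$ and ``corrector'' direction $\tilde\xi=F'_\bullet(u_k)$ (the sign of the direction flipped to $-F'_h$ on the maximization block $h$), and Lemma~\ref{Lm:measure_step_app} once to $(\tilde\mu_k,\mu_k,\tilde\mu_{k+1})$ with direction $-F'_\mu$. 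Adding the four resulting inequalities, the left-hand side collects to
\[
\eta\left\langle \left(F'_\theta(u_k),F'_f(u_k),-F'_\mu(u_k),-F'_h(u_k)\right),\,u_k-\hat u\right\rangle ,
\]
which, using Assumption~\ref{Asm:SPP_gen_convexity_app} (convexity in $(\theta,f)$ and concavity in $(\mu,h)$, adding the two gradient inequalities so that the diagonal $F(\theta_k,f_k,\mu_k,h_k)$ terms cancel), is bounded below by $\eta\big(F(\theta_k,f_k,\hat\mu,\hat h)-F(\hat\theta,\hat f,\mu_k,h_k)\big)$. On the right-hand side the Bregman terms telescope in $k$ (through $\tfrac12\|\cdot\|^2$ for $\theta,f,h$ and through $D_\Phi$ for $\mu$), leaving the ``error'' terms $\tfrac{\eta^2}{2}\|F'_\bullet(u_k)-F'_\bullet(\tilde u_k)\|^2$ (and $\tfrac{\eta^2}{8}\|\cdot\|^2$ on the $\mu$-block) together with the negative ``progress'' terms $-\tfrac12\|\bullet_k-\tilde\bullet_k\|^2$ and $-2\|\mu_k-\tilde\mu_k\|_{TV}^2$.

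The crucial step is then to absorb the error terms into the progress terms. Writing $S_k:=\|\theta_k-\tilde\theta_k\|_2^2+\|f_k-\tilde f_k\|_{\mathcal H_f}^2+\|\mu_k-\tilde\mu_k\|_{TV}^2+\|h_k-\tilde h_k\|_{\mathcal H_h}^2$, Assumption~\ref{Asm:SPP_gen_smooth_app} together with $(a_1+a_2+a_3+a_4)^2\le 4\sum_i a_i^2$ gives $\|F'_\bullet(u_k)-F'_\bullet(\tilde u_k)\|^2\le 4L^2 S_k$ for each block, so the four error terms sum to at most $\tfrac{13}{2}\eta^2L^2 S_k$, whereas the progress terms are bounded above by $-\tfrac12 S_k$. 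Hence with the prescribed step size $\eta=\tfrac1{16L}$, for which $\tfrac{13}{2}\eta^2L^2=\tfrac{13}{512}\le\tfrac12$, the error terms are dominated and the per-iteration inequality reduces to $\eta\big(F(\theta_k,f_k,\hat\mu,\hat h)-F(\hat\theta,\hat f,\mu_k,h_k)\big)$ being at most the telescoping Bregman differences in $(\tilde\theta_k,\tilde f_k,\tilde\mu_k,\tilde h_k)$.

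Summing over $k=0,\dots,N-1$, discarding the nonnegative terminal Bregman terms, and dividing by $\eta N=N/(16L)$ gives $\tfrac1N\sum_k\big(F(\theta_k,f_k,\hat\mu,\hat h)-F(\hat\theta,\hat f,\mu_k,h_k)\big)\le\tfrac{16L}{N}\big(\tfrac12\|\hat\theta-\tilde\theta_0\|_2^2+\tfrac12\|\hat f-\tilde f_0\|_{\mathcal H_f}^2+\tfrac12\|\hat h-\tilde h_0\|_{\mathcal H_h}^2+D_\Phi(\hat\mu,\tilde\mu_0)\big)$. Applying convexity in $(\theta,f)$ and concavity in $(\mu,h)$ once more (Jensen on the averaged iterate $\bar u_N$) replaces the left-hand average by $F(\bar\theta_N,\bar f_N,\hat\mu,\hat h)-F(\hat\theta,\hat f,\bar\mu_N,\bar h_N)$; since $U=U_\theta\times U_f\times U_\mu\times U_h$ is a product of compact sets, taking $\max$ over $(\hat\mu,\hat h)$ in the first term, $\min$ over $(\hat\theta,\hat f)$ in the second, and $\max_{u\in U}$ on the right-hand side yields exactly the claimed bound --- the $\tfrac12\|\cdot\|^2$ terms becoming $\tfrac{8L}{N}\|\cdot\|^2$ and the $D_\Phi$ term becoming $\tfrac{8L}{N}\cdot 2D_\Phi$. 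Theorems~\ref{Th:ideal_MP_determ} and \ref{Th:ideal_MP_stoch} then follow by specializing to two blocks (the latter additionally using Assumption~\ref{Asm:SPP_gen_stoch} and tower/variance bounds on the inexact directions).

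The main obstacle I expect is the bookkeeping in the mixed-norm Lipschitz estimate: tracking which block lives in which of the norms $\|\cdot\|_2,\|\cdot\|_{\mathcal H_f},\|\cdot\|_{TV},\|\cdot\|_{\mathcal H_h}$, collapsing all the cross-block Lipschitz constants $L_{\kappa_1\kappa_2}$ into the single $L$ of \eqref{eq:L_general_app}, and checking that one common step size $\eta=1/(16L)$ is small enough to dominate the total error contribution (the constant $16$ being a deliberately safe choice rather than a tight one). A secondary point is legitimacy of all the manipulations in infinite dimensions, but Lemmas~\ref{Lm:rkhs_step_app} and \ref{Lm:measure_step_app} already encapsulate the only places where infinite-dimensionality enters --- the optimality conditions and the three-point Bregman identity --- so the remainder of the argument is purely algebraic.
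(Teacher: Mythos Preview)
Your proposal is correct and follows essentially the same approach as the paper's proof: apply Lemmas~\ref{Lm:rkhs_step_app} and~\ref{Lm:measure_step_app} block-wise, telescope the Bregman terms, use the Lipschitz bounds of Assumption~\ref{Asm:SPP_gen_smooth_app} with $(a_1+\cdots+a_4)^2\le 4\sum a_i^2$ to absorb the error terms into the progress terms under $\eta=1/(16L)$, and then average using convexity--concavity. The only cosmetic difference is ordering: the paper first sums each block over $k$ (defining residuals $R_\theta,R_f,R_\mu,R_h$ and loosening the $\mu$-block constants to match the Hilbert blocks) and then shows $\sum_\kappa R_\kappa/\eta_\kappa\le 0$, whereas you keep the sharper $\mu$-constants and verify $\tfrac{13}{2}\eta^2 L^2\le\tfrac12$ per iteration; both routes are equivalent.
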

Before we prove the theorem, we would like to underline that the l.h.s. of the estimate is an appropriate measure of the suboptimality. 
This notion of the duality gap is quite standard for algorithms for saddle-point problems, see, e.g., Sect 5.3.6.1 of \cite{ben-tal2023lectures}.
 The motivation is that the saddle-point problem can be considered as a primal-dual pair of optimization problems
$Opt(P)=\min_{\theta,f} \{ \bar F (\theta,f):= \max_{\mu \in U_\mu, h \in U_h} F(\theta,f,\mu,h)\}$, $Opt(D)=\max_{\mu,h} \{ \underline{F} (\mu,h):= \min_{\theta \in U_\theta, f \in U_f} F(\theta,f,\mu,h)\}$. By the duality we have that
\begin{align*}
    &\bar F (\bar{\theta}_N, \bar{f}_N) - Opt(P) + (Opt(D) - \underline{F} (\bar{\mu}_N,\bar{h}_N)) \\
    &= \max_{\mu \in U_\mu, h \in U_h  } F(\bar{\theta}_N, \bar{f}_N,\mu,h) - Opt(P) + Opt(D) - \min_{\theta \in U_\theta, f \in U_f}F( \theta, f,\bar{\mu}_N, \bar{h}_N ) \\
    & = \max_{\mu \in U_\mu, h \in U_h} F(\bar{\theta}_N,\bar{f}_N,\mu,h) - \min_{\theta \in U_\theta, f \in U_f}F(\theta,f,\bar{\mu}_N,\bar{h}_N).
\end{align*}
Thus, our convergence rate implies simultaneously the convergence rate in the primal and in the dual problem.

We also underline that the algorithm does not need the set $U = U_\theta \times U_f \times U_\mu \times U_h $ as input and that it solves the original problem on the set $\Theta \times \mathcal{H}_f \times \mathcal{M} \times H$. The set $U$ appears only in the convergence result and this can be any compact set. In particular, if $U$ contains a neighborhood of the solution to the original problem, then the original problem is equivalent to the same problem but on the set $U$. Additionally, saddle-point problems can be equivalently reformulated as variational inequalities, see Example 2.2 in \cite{antonakopoulos2019adaptive}. Then, our notion of duality gap in the l.h.s. of the bound in the above Theorem can be bounded by the restricted gap function defined in (2.5a) of \cite{antonakopoulos2019adaptive}, see also Sect. 6 of \cite{stonyakin2022generalized}. Then, Lemma 1 in \cite{antonakopoulos2019adaptive} implies that $U$ can be an arbitrary compact set that contains a neighborhood of the solution.

\begin{proof}
Applying Lemma \ref{Lm:rkhs_step_app} to the step  in $\theta $, we obtain for any $\theta \in \Theta$ and for $k=0,...,N-1$
\begin{align}
\label{eq:Th:ideal_MP_determ_proof_1}
    & \la \theta_{k} - \theta , \eta_\theta F'_\theta(u_k) \ra \leq \frac{1}{2}\|\theta-\tilde{\theta}_k\|_2^2 - \frac{1}{2}\|\theta-\tilde{\theta}_{k+1}\|_2^2- \frac{1}{2}\|\theta_k-\tilde{\theta}_k\|_2^2 + \frac{\eta_\theta^2}{2}\|F'_\theta(u_k)-F'_\theta(\tilde{u}_k)\|_2^2.
\end{align}  
Summing these inequalities for $k=0,...,N-1$, we obtain
\begin{align*}
    & \sum_{k=0}^{N-1}\la \theta_{k} - \theta ,  F'_\theta(u_k) \ra \leq \frac{1}{2\eta_\theta}\|\theta-\tilde{\theta}_0\|_2^2 + \frac{R_{\theta}}{2\eta_\theta}, \\
    & R_{\theta}=   \sum_{k=0}^{N-1} \left(-  \|\theta_k-\tilde{\theta}_k\|_2^2 + \eta_\theta^2\|F'_\theta(u_k)-F'_\theta(\tilde{u}_k)\|_2^2 \right).
\end{align*}  
In the same way, we obtain for all $f \in \mathcal{H}_f$
\begin{align*}
    & \sum_{k=0}^{N-1}\la f_{k} - f ,  F'_f(u_k) \ra \leq \frac{1}{2\eta_f}\|f-\tilde{f}_0\|_{\mathcal{H}_f}^2 + \frac{R_{f}}{2\eta_f}, \\
    & R_{f}=   \sum_{k=0}^{N-1} \left(-  \|f_k-\tilde{f}_k\|_{\mathcal{H}_f}^2 + \eta_f^2\|F'_f(u_k)-F'_f(\tilde{u}_k)\|_{\mathcal{H}_f}^2 \right)
\end{align*} 
and for all $h \in H$
\begin{align*}
    & \sum_{k=0}^{N-1}\la h_{k} - h ,  -F'_h(u_k) \ra \leq \frac{1}{2\eta_h}\|h-\tilde{h}_0\|_{\mathcal{H}_h}^2 + \frac{R_{h}}{2\eta_h}, \\
    & R_{h}=   \sum_{k=0}^{N-1} \left(-  \|h_k-\tilde{h}_k\|_{\mathcal{H}_h}^2 + \eta_h^2\|F'_h(u_k)-F'_h(\tilde{u}_k)\|_{\mathcal{H}_h}^2 \right).
\end{align*} 
Finally, applying Lemma  \ref{Lm:measure_step_app} to the step in $\mu$, we obtain for any $ \mu \in \mathcal{M}$
\begin{align*}
    & \sum_{k=0}^{N-1}\la \mu_{k} - \mu ,  -F'_\mu(u_k) \ra \leq \frac{1}{\eta_\mu} D_{\Phi}(\mu,\tilde{\mu}_0) + \frac{R_{\mu}}{2\eta_\mu}, \\
    & R_{\mu}=   \sum_{k=0}^{N-1} \left(-  \|\mu_k-\tilde{\mu}_k\|_{TV}^2 + \eta_\mu^2\|F'_\mu(u_k)-F'_\mu(\tilde{u}_k)\|_{L^{\infty}}^2 \right).
\end{align*} 

By convexity of $F$ in $(\theta,f)$ and concavity of $F$ in $(\mu,h)$, we have, for all $(\theta,f) \in \Theta \times \mathcal{H}_f$
\begin{align*}
    & \frac{1}{N} \sum_{k=0}^{N-1} F(u_k) - F(\theta,f,\bar{\mu}_N,\bar{h}_N) \leq \frac{1}{N} \sum_{k=0}^{N-1} (F(u_k) - F(\theta,f,\mu_k,h_k)) \\
    & \leq \frac{1}{N}\sum_{k=0}^{N-1}(\la \theta_{k} - \theta ,  F'_\theta(u_k) \ra + \la f_{k} - f ,  F'_f(u_k) \ra ) \\
    & \leq \frac{1}{2N\eta_\theta}\|\theta-\tilde{\theta}_0\|_2^2 + \frac{R_{\theta}}{2N\eta_\theta} +  \frac{1}{2N\eta_f}\|f-\tilde{f}_0\|_{\mathcal{H}_f}^2 + \frac{R_{f}}{2N\eta_f}.
\end{align*} 
In the same way, we obtain that, for all $(\mu,h) \in \mathcal{M} \times H$
\begin{align*}
    & -\frac{1}{N} \sum_{k=0}^{N-1} F(u_k) + F(\bar{\theta}_N,\bar{f}_N,\mu,h) \leq \frac{1}{N} \sum_{k=0}^{N-1} (-F(u_k) + F(\theta_k,f_k,\mu,h)) \\
    & \leq \frac{1}{N}\sum_{k=0}^{N-1}(\la \mu_{k} - \mu ,  - F'_\mu(u_k) \ra + \la h_{k} - h , - F'_h(u_k) \ra ) \\
    & \leq \frac{1}{N\eta_\mu}D_{\Phi}(\mu,\tilde{\mu}_0) + \frac{R_{\mu}}{2N\eta_\mu} +  \frac{1}{2N\eta_h}\|h-\tilde{h}_0\|_{\mathcal{H}_h}^2 + \frac{R_{h}}{2N\eta_h}.
\end{align*} 
Combining the last two bounds, we obtain that for all $\theta \in \Theta, f \in \mathcal{H}_f, \mu \in \mathcal{M}, h \in H$ it holds that
\begin{align*}
    & F(\bar{\theta}_N,\bar{f}_N,\mu,h) - F(\theta,f,\bar{\mu}_N,\bar{h}_N) \\
    & \leq \frac{1}{2N\eta_\theta}\|\theta-\tilde{\theta}_0\|_2^2 +  \frac{1}{2N\eta_f}\|f-\tilde{f}_0\|_{\mathcal{H}_f}^2 + \frac{1}{N\eta_\mu}D_{\Phi}(\mu,\tilde{\mu}_0)  +  \frac{1}{2N\eta_h}\|h-\tilde{h}_0\|_{\mathcal{H}_h}^2 \\
    & + \frac{1}{2N}\left(\frac{R_{\theta}}{\eta_\theta}+ \frac{R_{f}}{\eta_f}+ \frac{R_{\mu}}{\eta_\mu} + \frac{R_{h}}{\eta_h}\right).
\end{align*} 

Our next goal is to show that
\[
\frac{R_{\theta}}{\eta_\theta}+ \frac{R_{f}}{\eta_f}+ \frac{R_{\mu}}{\eta_\mu} + \frac{R_{h}}{\eta_h} \leq 0.
\]
Using the Lipschitz condition in Assumption \ref{Asm:SPP_gen_smooth_app}, we obtain
\begin{align*}
R_{\theta} &=   \sum_{k=0}^{N-1} \left(-  \|\theta_k-\tilde{\theta}_k\|_2^2 + \eta_\theta^2\|F'_\theta(u_k)-F'_\theta(\tilde{u}_k)\|_2^2 \right)\\
& \leq   \sum_{k=0}^{N-1} \left(-  \|\theta_k-\tilde{\theta}_k\|_2^2 + 4\eta_\theta^2 (L_{\theta \theta}^2 \|\theta_k-\tilde{\theta}_k\|_2^2 + L_{\theta f}^2  \|f_k-\tilde{f}_k\|_{\mathcal{H}_f}^2 +L_{\theta \mu}^2 \|\mu_k-\tilde{\mu}_k\|_{TV}^2 +L_{\theta h}^2 \|h_k-\tilde{h}_k\|_{\mathcal{H}_h}^2) \right).
\end{align*} 
Combining this with the similar estimates for $R_{f},R_{\mu},R_{h}$ and rearranging the terms, we obtain
\begin{align*}
\frac{R_{\theta}}{\eta_\theta}+ \frac{R_{f}}{\eta_f}+ \frac{R_{\mu}}{\eta_\mu} + \frac{R_{h}}{\eta_h}   \leq  \sum_{k=0}^{N-1} \Big(& \|\theta_k-\tilde{\theta}_k\|_2^2 (-1/\eta_\theta + 4(L_{\theta\theta}^2\eta_\theta + L_{f\theta}^2 \eta_f + L_{\mu \theta}^2 \eta_{\mu} + L_{h \theta}^2 \eta_h)) \\ 
&+ \|f_k-\tilde{f}_k\|_{\mathcal{H}_f}^2 (-1/\eta_f + 4(L_{\theta f}^2\eta_\theta + L_{ff}^2 \eta_f + L_{\mu f}^2 \eta_{\mu} + L_{h f}^2 \eta_h)) \\
&+ \|\mu_k-\tilde{\mu}_k\|_{TV}^2 (-1/\eta_\mu + 4(L_{\theta \mu}^2\eta_\theta + L_{f\mu}^2 \eta_f + L_{\mu \mu}^2 \eta_{\mu} + L_{h \mu}^2 \eta_h))\\
&+ \|h_k-\tilde{h}_k\|_{\mathcal{H}_h}^2 (-1/\eta_h+ 4(L_{\theta h}^2\eta_\theta + L_{fh}^2 \eta_f + L_{\mu h}^2 \eta_{\mu} + L_{h h}^2 \eta_h)) \Big) \leq 0,
\end{align*} 
where we used that $\eta_\theta,\eta_f,\eta_\mu,\eta_h \leq \frac{1}{16L}$ for $L$  defined in \eqref{eq:L_general_app}.

Thus, we finally obtain that for any compact $U = U_\theta \times U_f \times U_\mu \times U_h \subset \Theta \times \mathcal{H}_f \times \mathcal{M} \times H$
\begin{align*}
&\max_{\mu \in U_\mu, h \in U_h} F(\bar{\theta}_N,\bar{f}_N,\mu,h) - \min_{\theta \in U_\theta, f \in U_f}F(\theta,f,\bar{\mu}_N,\bar{h}_N) \\
    & \leq \frac{8L}{N}\max_{u \in U} \left(\|\theta-\tilde{\theta}_0\|_2^2 + \|f-\tilde{f}_0\|_{\mathcal{H}_f}^2 + 2 D_{\Phi}(\mu,\tilde{\mu}_0)  +  \|h-\tilde{h}_0\|_{\mathcal{H}_h}^2 \right).
\end{align*}

\end{proof}

\subsection{Analysis in the Stochastic Case}

To account for potential inexactness in the first-order information, we assume that instead of exact derivatives, the algorithm uses their inexact counterparts $\tilde{F}'_\theta(u),\tilde{F}'_f(u),\tilde{F}'_\mu(u),\tilde{F}'_h(u),$ that may be random and are assumed to satisfy the following assumption.
\begin{assumption}
\label{Asm:SPP_gen_stoch_app}
\begin{align}
&F'_\theta(u)=\E\tilde{F}'_\theta(u), \\
&F'_f(u)=\E\tilde{F}'_f(u), \\
&F'_\mu(u)=\E\tilde{F}'_\mu(u), \\
&F'_h(u)=\E\tilde{F}'_h(u),
\end{align}
\begin{align}
&\E\|F'_\theta(u)-\tilde{F}'_\theta(u)\|_2^2 \leq  \sigma_\theta^2, \\
&\E\|F'_f(u)-\tilde{F}'_f(u)\|_{\mathcal{H}_f}^2 \leq \sigma^2_f, \\
&\E\|F'_\mu(u)-\tilde{F}'_\mu(u)\|_{L^{\infty}}^2 \leq \sigma^2_\mu, \\
&\E\|F'_h(u)-\tilde{F}'_h(u)\|_{\mathcal{H}_h}^2 \leq \sigma^2_h.
\end{align}
\end{assumption}

\begin{theorem}
\label{Th:ideal_MP_stoch_app}
Let Assumptions \ref{Asm:SPP_gen_convexity_app}, \ref{Asm:SPP_gen_smooth_app}, and \ref{Asm:SPP_gen_stoch_app} hold. Let also in Algorithm \ref{alg:mirror_prox_gen_app} the stochastic derivatives be used instead of the deterministic and the step sizes satisfy $\eta_\theta=\eta_f=\eta_\mu=\eta_h=\frac{1}{16L}$, where $L$ is defined in \eqref{eq:L_general_app}. Then, for any compact set $U = U_\theta \times U_f \times U_\mu \times U_h \subseteq \Theta \times \mathcal{H}_f \times \mathcal{M} \times H$, the sequence $(\bar{\theta}_N,\bar{f}_N,\bar{\mu}_N,\bar{h}_N)$ generated by Algorithm \ref{alg:mirror_prox_gen_app} satisfies 
\begin{align*}
&\E \left\{\max_{\mu \in U_\mu, h \in U_h}  F(\bar{\theta}_N,\bar{f}_N,\mu,h) - \min_{\theta \in U_\theta, f \in U_f}  F(\theta,f,\bar{\mu}_N,\bar{h}_N) \right\} \\
    & \leq \frac{8L}{N}\max_{u \in U} \left(\|\theta-\tilde{\theta}_0\|_2^2 + \|f-\tilde{f}_0\|_{\mathcal{H}_f}^2 + 2D_{\Phi}(\mu,\tilde{\mu}_0)  +  \|h-\tilde{h}_0\|_{\mathcal{H}_h}^2 \right) + \frac{3}{16L} (\sigma^2_\theta+\sigma^2_f+\sigma^2_\mu+\sigma^2_h).
\end{align*} 
\end{theorem}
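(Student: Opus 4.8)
The plan is to follow the deterministic proof of Theorem~\ref{Th:ideal_MP_determ_app} almost verbatim, tracking the extra error terms that arise from replacing the exact derivatives $F'_\kappa(u)$ by the stochastic ones $\tilde F'_\kappa(u)$, and then taking expectations. First I would rerun the mirror-step lemmas: Lemma~\ref{Lm:rkhs_step_app} applied to the steps in $\theta, f, h$ and Lemma~\ref{Lm:measure_step_app} applied to the step in $\mu$. The crucial point is that these lemmas are purely geometric identities and make no use of Lipschitz continuity; hence they hold verbatim with $\xi$ and $\tilde\xi$ instantiated as the \emph{stochastic} gradients $\tilde F'_\kappa(\tilde u_k)$ and $\tilde F'_\kappa(u_k)$. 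This yields, for each variable, a telescoping bound of the form $\sum_{k}\la \kappa_k-\kappa,\tilde F'_\kappa(u_k)\ra \le \frac{1}{2\eta_\kappa}\|\kappa-\tilde\kappa_0\|^2 + \frac{1}{2\eta_\kappa}\sum_k(-\|\kappa_k-\tilde\kappa_k\|^2 + \eta_\kappa^2\|\tilde F'_\kappa(u_k)-\tilde F'_\kappa(\tilde u_k)\|^2)$, with the obvious modification ($D_\Phi$ in place of the squared norm, factor $\frac18$ and $2\|\cdot\|_{TV}^2$) for $\mu$.

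Next I would introduce the noise splitting $\tilde F'_\kappa(u_k) = F'_\kappa(u_k) + \delta_\kappa^k$ where $\delta_\kappa^k := \tilde F'_\kappa(u_k) - F'_\kappa(u_k)$, and likewise at $\tilde u_k$. Using convexity of $F$ in $(\theta,f)$ and concavity in $(\mu,h)$ exactly as in the deterministic proof, but with the linearization performed at the \emph{true} gradient $F'_\kappa(u_k)$, the extra contribution of the noise terms to the duality gap is $\frac1N\sum_k \big(\la\theta-\theta_k,\delta_\theta^k\ra + \la f-f_k,\delta_f^k\ra + \la\mu_k-\mu,\delta_\mu^k\ra + \la h_k-h,\delta_h^k\ra\big)$. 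The residual terms $R_\kappa$ now involve $\|\tilde F'_\kappa(u_k)-\tilde F'_\kappa(\tilde u_k)\|^2$; I would bound this by $2\|F'_\kappa(u_k)-F'_\kappa(\tilde u_k)\|^2 + 2\|\delta_\kappa^k - \tilde\delta_\kappa^k\|^2 \le 2\|F'_\kappa(u_k)-F'_\kappa(\tilde u_k)\|^2 + 4\|\delta_\kappa^k\|^2 + 4\|\tilde\delta_\kappa^k\|^2$ (where $\tilde\delta_\kappa^k$ is the noise at $\tilde u_k$). The first piece is absorbed into the negative $-\|\kappa_k-\tilde\kappa_k\|^2$ terms exactly as before — but note the step size must now be halved in the constant (or the Lipschitz term carries an extra factor $2$), so I would re-examine whether $\eta = \frac{1}{16L}$ still suffices; with the factor-$8$ replacing factor-$4$ in the $R_\kappa$ estimates, $16L$ still works since $8\cdot 2 = 16$, matching the coefficient budget. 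The remaining noise pieces $4\|\delta_\kappa^k\|^2 + 4\|\tilde\delta_\kappa^k\|^2$ contribute $\frac{1}{2N\eta_\kappa}\sum_k 8\|\cdot\|^2$-type terms whose expectation is bounded via Assumption~\ref{Asm:SPP_gen_stoch_app}.

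The main obstacle — and the one place requiring genuine care rather than bookkeeping — is controlling the cross terms $\la\theta-\theta_k,\delta_\theta^k\ra$ and its analogues: $\theta_k$ depends on $\delta_\kappa^k$ through the extrapolation step $u_k = \MD(\tilde u_k, \tilde F'(\tilde u_k))$, so one cannot simply claim $\E\la\theta-\theta_k,\delta_\theta^k\ra = 0$. The standard remedy (as in \cite{juditsky2011solving}, \cite{nemirovski_robust_2009}) is to split $\theta_k - \theta = (\theta_k - \tilde\theta_k) + (\tilde\theta_k - \theta)$; the second summand $\tilde\theta_k - \theta$ is measurable with respect to the filtration up to step $k$ and $\delta^k_\theta$ is conditionally mean-zero given that filtration, so $\E\la\tilde\theta_k-\theta,\delta_\theta^k\ra = 0$; the first summand is handled by Young's inequality, $\la\theta_k-\tilde\theta_k,\delta_\theta^k\ra \le \frac{1}{2\beta}\|\theta_k-\tilde\theta_k\|^2 + \frac{\beta}{2}\|\delta_\theta^k\|^2$, where the $\|\theta_k-\tilde\theta_k\|^2$ part is again absorbed into the available negative budget (choosing $\beta$ proportional to $\eta_\theta$) and the $\|\delta_\theta^k\|^2$ part contributes to the final variance term. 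Collecting all the variance contributions — $8$ from the $R_\kappa$ residuals split into $4+4$ at $u_k$ and $\tilde u_k$, and the $\beta$-term from the cross terms, all multiplied by $\frac{1}{2N\eta_\kappa} = \frac{16L}{2N}$ and summed over $N$ steps — produces a bound of the form $\frac{c L}{N}\cdot N \cdot \frac{\sigma_\kappa^2}{L^2} = \frac{c\sigma_\kappa^2}{L}$; tuning the constants to land exactly on $\frac{3}{16L}(\sigma_\theta^2+\sigma_f^2+\sigma_\mu^2+\sigma_h^2)$ is then routine arithmetic, and combining with the deterministic part of the bound gives the stated inequality. Finally, the same reasoning restricted to the two-variable case $(f,\mu)$ yields Theorem~\ref{Th:ideal_MP_stoch} as a corollary.
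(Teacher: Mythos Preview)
Your overall plan --- apply Lemmas~\ref{Lm:rkhs_step_app} and~\ref{Lm:measure_step_app} with the stochastic gradients, bound the squared difference $\|\tilde F'_\kappa(u_k)-\tilde F'_\kappa(\tilde u_k)\|^2$ by a Lipschitz part plus noise variances, and otherwise rerun the deterministic argument --- is exactly the paper's approach. Where you diverge is in what you flag as ``the main obstacle'': you worry that $\theta_k$ depends on $\delta^k_\kappa:=\tilde F'_\kappa(u_k)-F'_\kappa(u_k)$ and therefore introduce the split $\theta_k-\theta=(\theta_k-\tilde\theta_k)+(\tilde\theta_k-\theta)$ together with a Young's-inequality absorption. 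This concern is misplaced. In mirror \emph{prox} the extragradient point $u_k=(\theta_k,f_k,\mu_k,h_k)$ is produced from the \emph{first} oracle call $\tilde F'(\tilde u_k)$, whereas $\delta^k$ is the noise in the \emph{second} call $\tilde F'(u_k)$; with independent samples at the two calls, $u_k$ is measurable with respect to the filtration generated before $\delta^k$, so for each fixed $\theta$ one has $\E\la\theta_k-\theta,\delta_\theta^k\ra=0$ directly, with no splitting needed. The paper just takes expectation of the one-step inequality and replaces $\tilde F'_\theta(u_k)$ by $F'_\theta(u_k)$ on the left-hand side.

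The second, minor, difference is the decomposition of $\|\tilde F'_\kappa(u_k)-\tilde F'_\kappa(\tilde u_k)\|^2$: the paper uses the three-term bound $\le 3\big(\|\delta^k_\kappa\|^2+\|\tilde\delta^k_\kappa\|^2+\|F'_\kappa(u_k)-F'_\kappa(\tilde u_k)\|^2\big)$ rather than your $2$--$4$--$4$ split. This (i) keeps the Lipschitz coefficient at $3$, so that $\eta=1/(16L)$ still forces the residuals nonpositive (indeed $3\cdot 16L^2\eta^2=3/16<1$), and (ii) yields a per-variable variance contribution of exactly $3\eta\sigma_\kappa^2=\tfrac{3}{16L}\sigma_\kappa^2$, matching the stated constant on the nose. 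Your route, with the $4+4$ noise pieces \emph{plus} the extra Young's-inequality variance terms from the unnecessary cross-term treatment, cannot recover $3/(16L)$ and would land on a strictly larger constant. (As an aside: the subtlety you are sensing is real but lies elsewhere --- neither your outline nor the paper's terse proof explicitly bridges the gap from a bound in expectation for each fixed $u$ to the claimed bound with $\E$ outside the $\max/\min$; that step would require a ghost-sequence argument in the style of \cite{nemirovski_robust_2009}, not the $\theta_k$--$\tilde\theta_k$ split you propose.)
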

\begin{proof}
We proceed as in the proof of Theorem \ref{Th:ideal_MP_determ_app} changing in Algorithm \ref{alg:mirror_prox_gen_app} the exact first-order information to its inexact counterpart. In this way, we obtain the following counterpart of \eqref{eq:Th:ideal_MP_determ_proof_1} 
\begin{align}
    & \la \theta_{k} - \theta , \eta_\theta \tilde{F}'_\theta(u_k) \ra \leq \frac{1}{2}\|\theta-\tilde{\theta}_k\|_2^2 - \frac{1}{2}\|\theta-\tilde{\theta}_{k+1}\|_2^2- \frac{1}{2}\|\theta_k-\tilde{\theta}_k\|_2^2 + \frac{\eta_\theta^2}{2}\|\tilde{F}'_\theta(u_k)-\tilde{F}'_\theta(\tilde{u}_k)\|_2^2.
\end{align}  
Using the inequality
\begin{align}
     \E\|\tilde{F}'_\theta(u_k)-\tilde{F}'_\theta(\tilde{u}_k)\|_2^2 &\leq 3 \E\left(
    \|\tilde{F}'_\theta(u_k)-F'_\theta(u_k)\|_2^2 
    +\|\tilde{F}'_\theta(\tilde{u}_k)-F'_\theta(\tilde{u}_k)\|_2^2 +\|F'_\theta(u_k)-F'_\theta(\tilde{u}_k)\|_2^2  \right) \\
    & \stackrel{\text{Assumpt.} \ref{Asm:SPP_gen_stoch_app}}{\leq} 6 \sigma_\theta^2 + 3 \E \|F'_\theta(u_k)-F'_\theta(\tilde{u}_k)\|_2^2
\end{align}  
and taking the expectation in the previous inequality, we obtain the following counterpart of \eqref{eq:Th:ideal_MP_determ_proof_1} 
\begin{align}
     \la \theta_{k} - \theta , \eta_\theta F'_\theta(u_k) \ra \leq &\frac{1}{2}\E\|\theta-\tilde{\theta}_k\|_2^2 - \frac{1}{2}\E\|\theta-\tilde{\theta}_{k+1}\|_2^2- \frac{1}{2}\E\|\theta_k-\tilde{\theta}_k\|_2^2 \\
    & + \frac{3\eta_\theta^2}{2}\E\|F'_\theta(u_k)-F'_\theta(\tilde{u}_k)\|_2^2 + 3 \eta_\theta^2 \sigma_\theta^2.
\end{align}  

Repeating the same steps as in the proof of Theorem \ref{Th:ideal_MP_determ_app}, we obtain that 
for any compact $U = U_\theta \times U_f \times U_\mu \times U_h \subset \Theta \times \mathcal{H}_f \times \mathcal{M} \times H$
\begin{align}
& \E \left\{\max_{\mu \in U_\mu, h \in U_h}  F(\bar{\theta}_N,\bar{f}_N,\mu,h) - \min_{\theta \in U_\theta, f \in U_f}  F(\theta,f,\bar{\mu}_N,\bar{h}_N) \right\} \notag\\
    & \leq \frac{8L}{N}\max_{u \in U} \left(\|\theta-\tilde{\theta}_0\|_2^2 + \|f-\tilde{f}_0\|_{\mathcal{H}_f}^2 + 2D_{\Phi}(\mu,\tilde{\mu}_0)  +  \|h-\tilde{h}_0\|_{\mathcal{H}_h}^2 \right) + \frac{3}{16L} (\sigma^2_\theta+\sigma^2_f+\sigma^2_\mu+\sigma^2_h). \label{eq:Th:ideal_MP_stoch_proof_4}
\end{align} 
\end{proof}
Let us denote $\sigma^2=\sigma^2_\theta+\sigma^2_f+\sigma^2_\mu+\sigma^2_h$. 
As we see, Theorem \ref{Th:ideal_MP_stoch_app} guarantees the same convergence rate as in the exact case, but up to some vicinity which is governed by the level of noise. In most cases, the $\sigma^2/L$ term can be made of the same order $1/N$ by using mini-batching technique. Indeed, a mini-batch of size $N$ allows to change the variance from $\sigma^2$ to $\sigma^2/N$. Yet, we note that in this case, $N$ iterations will require the number of samples $O(N^2)$.

An alternative would be to use the information about the diameter of the set $U$. Indeed, assume that 
\[
\max_{u \in U} \left(\|\theta-\tilde{\theta}_0\|_2^2 + \|f-\tilde{f}_0\|_{\mathcal{H}_f}^2 + 2D_{\Phi}(\mu,\tilde{\mu}_0)  +  \|h-\tilde{h}_0\|_{\mathcal{H}_h}^2 \right) \leq \Omega_{U}^2.
\]
Then, we obtain the following counterpart of the r.h.s. of \eqref{eq:Th:ideal_MP_stoch_proof_4} substituting $\eta_\theta=\eta_f=\eta_\mu=\eta_h=\eta$
\[
\frac{\Omega_{U}^2}{2N \eta} + 3\sigma^2\eta.
\]
Fixing the number of steps $N$ and choosing
\[
\eta_\theta=\eta_f=\eta_\mu=\eta_h =\eta=\min\left\{ \frac{1}{16L}, \frac{\Omega_{U} \sigma }{ \sqrt{6N}} \right\},
\]
we obtain the following result
\begin{align}
& \E \left\{\max_{\mu \in U_\mu, h \in U_h}  F(\bar{\theta}_N,\bar{f}_N,\mu,h) - \min_{\theta \in U_\theta, f \in U_f} F(\theta,f,\bar{\mu}_N,\bar{h}_N) \right\} \notag\\
    & \leq \max \left \{\frac{8L\Omega_{U}^2}{N}, \sqrt{\frac{3\sigma^2 \Omega_{U}^2}{2N}}\right\}.
\end{align}

\section{DERIVATIONS OF RESULTS FOR DISTRIBUTIONALLY ROBUST OPTIMIZATION}
\subsection{Set-up and Adaptation of the General KMP Algorithm}
In this subsection we particularize the elements of Algorithm \ref{alg:mirror_prox_gen_app} for the specific DRO problem \eqref{eq:kdro_smooth}. 
We choose $\rkhs_f=\rkhs_h=\rkhs$ to be a reproducing kernel Hilbert space with kernel $k$.

Our main assumptions for this problem are 
\begin{itemize}
    \item $l$ is convex w.r.t. $\theta$.
    \item $L_0=\sup_{x,\theta}\|\nabla_{\theta} l(\theta;x)\|_2 < +\infty$. 
    \item  $\nabla_{\theta} l(\theta;x)$ is $L(x)$-Lipschitz w.r.t. $\theta$ and $L_1=\sup_{\mu}\E_{x \sim \mu} L(x)^2 < +\infty$.
    \item $C=\sup _{x} k(x, x)<+\infty$. 
\end{itemize}
Clearly, then the objective $F$ is convex in $(\theta,f)$ for fixed $(\mu,h)$ and concave in $(\mu,h)$ for fixed $(\theta,f)$. 

The Frechet derivatives of $F$ with respect to the variables $(\theta,f,\mu,h)$ are given by
\begin{align}
     F'_\theta &= \E_{x \sim \mu} \nabla_{\theta} l(\theta;x) \\
     F'_f &= \int k(x,x') d \hat{\mu}(x') + \epsilon h (x) - \int k(x,x') d \mu (x')  = \E_{x\sim\hat{\mu}} k(\cdot,x) + \epsilon h (\cdot)  -\E_{x\sim \mu} k(\cdot,x) \\
     - F'_\mu &= f(\cdot) - l(\theta;\cdot) \\
     - F'_h &= - \epsilon f(\cdot).
\end{align}

Since the derivatives w.r.t. $\theta$ and $f$ have the form of expectation, we can use the following stochastic counterparts.
We can take a sample of $X_i$'s from $\mu$ to construct an unbiased stochastic derivative
\begin{equation}
\label{eq:DRO_F_theta_stoch}
     \tilde{F}'_\theta  = \frac{1}{N_\theta} \sum_{i=1}^{N_\theta} \nabla_{\theta} l(\theta;X_i).
\end{equation}
Similarly, we can take a sample of $X_i$'s from $\mu$ and $\hat{X}_i$ from $\hat{\mu}$  to construct an unbiased stochastic derivative
\begin{equation}
\label{eq:DRO_F_f_stoch}
    \tilde{F}'_f = \epsilon h (\cdot) + \frac{1}{N_f} \sum_{i=1}^{N_f} (k(\cdot,\hat{X}_i) + k(\cdot,X_i)).
\end{equation}

The Lipschitz constants of the derivatives are estimated in the following way.
The derivative $F'_\theta$ depends only on $\mu$ and $\theta$. Thus, $L_{\theta f}=L_{\theta h}=0$. Further, we have
\begin{align}
&\left\| 
\E_{x \sim \mu} \nabla_{\theta} l(\theta_1;x) - \E_{x \sim \mu} \nabla_{\theta} l(\theta_2;x) \right\|_2  \leq  \E_{x \sim \mu} L(x)   \| \theta_1 - \theta_2 \|_2 
\end{align}
and $L_{\theta \theta} = L_1$.
\begin{align}
&\left\| 
\E_{x \sim \mu_1} \nabla_{\theta} l(\theta;x) - \E_{x \sim \mu_2} \nabla_{\theta} l(\theta;x) \right\|_2 \leq  L_0  \|\mu_1 - \mu_2\|_{TV}
\end{align}
and $L_{\theta \mu} = L_0$.

The derivative $F'_f$ depends only on $\mu$ and $h$. Thus, $L_{f f}=L_{f \theta}=0$. Further, we have
\begin{align}
&\left\| 
\E_{x\sim\hat{\mu}} k(\cdot,x) + \epsilon h_1 (\cdot)  +\E_{x\sim \mu_1} k(\cdot,x) - (\E_{x\sim\hat{\mu}} k(\cdot,x) + \epsilon h_2 (\cdot)  +\E_{x\sim \mu_2} k(\cdot,x))  \right\|_\rkhs \\
& \leq 
\epsilon \|h_1-h_2\|_\rkhs + \|\E_{x\sim \mu_1} k(\cdot,x) -\E_{x\sim \mu_2} k(\cdot,x)\|_\rkhs \leq \epsilon \|h_1-h_2\|_\rkhs  + \sqrt{C} \|\mu_1 - \mu_2\|_{TV},
\end{align}
i.e., $L_{f \mu}=\sqrt{C}, L_{f h}=\epsilon$.
Here we used that 
$$
{\| \E_{x\sim \mu_1} k(\cdot,x) - \E_{x\sim \mu_2} k(\cdot,x) \|_\rkhs} \leq {\sqrt{C}}\|\mu_1 - \mu_2\|_{TV}.
$$

The derivative $F'_\mu$ depends only on $f$ and $\theta$. Thus, $L_{\mu \mu}=L_{\mu h}=0$. Further, we have
\begin{align}
&\left\| 
-f_1(\cdot) + l(\theta_1;\cdot) - (-f_2(\cdot) + l(\theta_2;\cdot))  \right\|_{L^\infty} \\
& \leq 
\sqrt{C}\|f_1-f_2\|_\rkhs + L_0 \|\theta_1 - \theta_2\|_{\rkhs},
\end{align}
i.e., $L_{f \mu}=\sqrt{C}, L_{f \theta}=L_0$.
Here we used that 
\begin{multline}
    \| f_2(\cdot) - f_1(\cdot)  \|_{L^{\infty}}
    = \sup_x |f_2(x) - f_1(x)|
    = \sup_x \langle f_2 - f_1, \phi(x) \rangle_{\mathcal H}
    \\
    \leq
     \| f_2 - f_1\|_{\mathcal H} \cdot \sup_x  \|\phi(x)\|_{\mathcal H}
     \leq\sqrt{C} \cdot \| f_2 - f_1\|_{\mathcal H}.
\end{multline}

Finally, the derivative $F'_h$ depends only on $f$. Thus, $L_{h \theta}=L_{h \mu}=L_{h h}=0$. Further, we have
\begin{align}
&\left\| 
- \epsilon f_1(\cdot) - (- \epsilon f_2(\cdot)) \right\|_{\rkhs}   \leq 
\epsilon \|f_1-f_2\|_\rkhs.
\end{align}
Thus, $L_{h f}=\epsilon$.

As we see, or main assumptions in Theorem \ref{Th:ideal_MP_determ_app} hold for the DRO problem \eqref{eq:kdro_smooth}. 
Moreover, stochastic derivatives in \eqref{eq:DRO_F_theta_stoch} and \eqref{eq:DRO_F_f_stoch} satisfy Assumption \ref{Asm:SPP_gen_stoch_app}. Indeed, we have
\begin{align}
&\E_{X \sim \mu}\left\| 
\E_{x \sim \mu} \nabla_{\theta} l(\theta;x) - \nabla_{\theta} l(\theta;X) \right\|_{2}^2  \leq 
\E_{X \sim \mu}\left\| 
\nabla_{\theta} l(\theta;X) \right\|_{2}^2 \leq L_0^2,
\end{align}
\begin{align}
&\E_{\hat{X}\sim \hat{\mu}, X \sim \mu}\left\| 
\epsilon h (\cdot) +  k(\cdot,\hat{X}) + k(\cdot,X) - ( \E_{x\sim\hat{\mu}} k(\cdot,x) + \epsilon h (\cdot)  -\E_{x\sim \mu} k(\cdot,x))
 \right\|_{\rkhs}^2 \\
 & \leq 2\E_{X \sim \mu}\left\| k(\cdot,X) \right\|_{\rkhs}^2 + 2\E_{\hat{X} \sim \hat{\mu}}\left\| k(\cdot,\hat{X}) \right\|_{\rkhs}^2 \leq 4 C.
\end{align}
This allows us to apply also Theorem \ref{Th:ideal_MP_stoch_app} to the DRO problem \eqref{eq:kdro_smooth}. This leads to the following lemma, which
summarizes the applicability of our analysis to the DRO setting.
\begin{lemma}   
\label{thm:DRO_simple}
Assumptions \ref{Asm:SPP_gen_smooth_app}, \ref{Asm:SPP_gen_stoch_app} hold for the smoothed DRO problem~\eqref{eq:kdro_smooth}.
Consequently, the results of Theorems \ref{Th:ideal_MP_determ_app} and \ref{Th:ideal_MP_stoch_app} hold for the DRO problem~\eqref{eq:kdro_smooth}.
\end{lemma}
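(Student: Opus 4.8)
The plan is to verify, one by one, that Assumptions~\ref{Asm:SPP_gen_convexity_app}, \ref{Asm:SPP_gen_smooth_app} and \ref{Asm:SPP_gen_stoch_app} of Section~\ref{S:MP} hold for the four-variable problem \eqref{eq:SPP_general_app} instantiated at the smoothed kernel-DRO objective $F(\theta,f,\mu,h)=\frac1n\sum_{i=1}^n f(x_i)+\epsilon\langle h,f\rangle_{\rkhs}+\E_\mu(l(\theta;x)-f(x))$ from \eqref{eq:kdro_smooth}, with $\rkhs_f=\rkhs_h=\rkhs$; once this is done, Theorems~\ref{Th:ideal_MP_determ_app} and \ref{Th:ideal_MP_stoch_app} apply verbatim and give the claim. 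First I would note that $F$ is affine in $h$, affine in $\mu$, linear in $f$, and, under the convexity-in-$\theta$ assumption on $l$, convex in $(\theta,f)$ and concave in $(\mu,h)$, which is Assumption~\ref{Asm:SPP_gen_convexity_app}. Next I would record the four Fréchet derivatives, using the reproducing property $f(x)=\langle f,k(\cdot,x)\rangle_{\rkhs}$: $F'_\theta=\E_{x\sim\mu}\nabla_\theta l(\theta;x)$ in the Euclidean sense, $F'_f=\E_{x\sim\hat{\mu}}k(\cdot,x)+\epsilon h-\E_{x\sim\mu}k(\cdot,x)$ in $\rkhs$, $-F'_\mu=f(\cdot)-l(\theta;\cdot)$ as an $L^\infty$ function, and $-F'_h=-\epsilon f$ in $\rkhs$.

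The bulk of the work is the block-Lipschitz estimates of Assumption~\ref{Asm:SPP_gen_smooth_app}. The key observation is that each derivative depends on only two of the four variables, so at once a large number of the sixteen constants $L_{\kappa_1\kappa_2}$ vanish. For the remaining ones I would use two elementary RKHS inequalities derived from $C=\sup_x k(x,x)<\infty$: the embedding bound $\|f\|_{L^\infty}=\sup_x\langle f,k(\cdot,x)\rangle_{\rkhs}\le\sqrt{C}\,\|f\|_{\rkhs}$ (Cauchy--Schwarz together with $\|k(\cdot,x)\|_{\rkhs}^2=k(x,x)\le C$), and the mean-embedding contraction $\|\E_{x\sim\mu_1}k(\cdot,x)-\E_{x\sim\mu_2}k(\cdot,x)\|_{\rkhs}\le\sqrt{C}\,\|\mu_1-\mu_2\|_{TV}$ (the same bound on $\|k(\cdot,x)\|_{\rkhs}$ and duality between $\|\cdot\|_{\rkhs}$ and the TV norm). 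Combined with $L_0=\sup_{x,\theta}\|\nabla_\theta l(\theta;x)\|_2$, the $L(x)$-Lipschitzness of $\nabla_\theta l(\cdot;x)$, $L_1=\sup_\mu\E_{x\sim\mu}L(x)^2$, and Jensen's inequality, this yields $L_{\theta\mu}=L_0$ (and $L_{\theta\theta}$ from the Lipschitzness of $\nabla_\theta l$) out of $F'_\theta$; $L_{fh}=\epsilon$, $L_{f\mu}=\sqrt{C}$ out of $F'_f$; $L_{\mu f}=\sqrt{C}$, $L_{\mu\theta}=L_0$ out of $F'_\mu$; and $L_{hf}=\epsilon$ out of $F'_h$, all other constants being zero. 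Hence $L$ in \eqref{eq:L_general_app} is finite and Assumption~\ref{Asm:SPP_gen_smooth_app} holds.

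For Assumption~\ref{Asm:SPP_gen_stoch_app} I would observe that $F'_\theta$ and $F'_f$ are themselves expectations over $x\sim\mu$ (and $x\sim\hat{\mu}$), so the minibatch estimators $\tilde{F}'_\theta,\tilde{F}'_f$ in \eqref{eq:DRO_F_theta_stoch}--\eqref{eq:DRO_F_f_stoch} are unbiased, while $F'_\mu$ and $F'_h$ carry no randomness and are used exactly, so $\sigma_\mu=\sigma_h=0$. The variance bounds then follow from $\E_{x\sim\mu}\|\nabla_\theta l(\theta;x)\|_2^2\le L_0^2$ (giving $\sigma_\theta^2\le L_0^2$) and $\E_{x}\|k(\cdot,x)\|_{\rkhs}^2=\E_x k(x,x)\le C$ together with $\|a+b\|^2\le 2\|a\|^2+2\|b\|^2$ (giving $\sigma_f^2\le 4C$). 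With Assumptions~\ref{Asm:SPP_gen_convexity_app}--\ref{Asm:SPP_gen_stoch_app} all in force for \eqref{eq:kdro_smooth}, Theorems~\ref{Th:ideal_MP_determ_app} and \ref{Th:ideal_MP_stoch_app} give the stated conclusion.

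The only genuinely delicate point is to keep each Lipschitz estimate in the correct norm pairing --- $L^\infty$ on the $\mu$-derivative, $\rkhs$ on the $f$- and $h$-derivatives, and $\|\cdot\|_{TV}$ for $\mu$-perturbations --- so that the constants line up exactly as the general proximal setup of Section~\ref{S:MP} demands; once the two RKHS inequalities above are in place, the rest is routine bookkeeping.
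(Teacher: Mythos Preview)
Your proposal is correct and follows essentially the same route as the paper: compute the four Fr\'echet derivatives, observe which variables each one depends on to zero out most $L_{\kappa_1\kappa_2}$, and then bound the remaining block-Lipschitz constants via the two RKHS inequalities $\|f\|_{L^\infty}\le\sqrt{C}\|f\|_{\rkhs}$ and $\|\E_{\mu_1}k(\cdot,x)-\E_{\mu_2}k(\cdot,x)\|_{\rkhs}\le\sqrt{C}\|\mu_1-\mu_2\|_{TV}$, finishing with the variance bounds $\sigma_\theta^2\le L_0^2$ and $\sigma_f^2\le 4C$ for the minibatch estimators. The constants you obtain ($L_{\theta\mu}=L_{\mu\theta}=L_0$, $L_{f\mu}=L_{\mu f}=\sqrt{C}$, $L_{fh}=L_{hf}=\epsilon$, etc.) match the paper's exactly, and your inclusion of the convexity check (Assumption~\ref{Asm:SPP_gen_convexity_app}) is appropriate since it is needed to invoke the theorems even though the lemma statement omits it.
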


\subsection{Proof of Guarantees for Distributionally Robust Optimization}
For notational conciseness, we denote the DRO risk associated with the decision $\theta$ as
\begin{align}
Z_{\epsilon, \emp}(\theta)
:=\sup_{\mu : \mathcal D(\mu, \emp) \le \epsilon} \E_\mu[l(\theta; x)].
\end{align}
Note that $Z_{\epsilon, \emp}(\theta)$ is a random variable since $\emp$ is a random sample from the generating distribution.
Let us denote the ideal DRO decision 
$$\THDRO := \operatorname*{\arg\inf}_\theta\sup_{\mu : \mathcal D(\mu, \emp) \le \epsilon} \E_\mu[l(\theta; x)],$$
which is often not realizable in practice.

\subsubsection{Proof of Proposition~\ref{thm:dro-subopt-guarantee}}
We restate the results and provide more detailed statements.
\begin{proposition}
    [DRO Guarantee for KMP decision sub-optimality]
    Suppose $\THKMP=\frac1N\sum_{k=0}^{N-1}\theta_k$ is the averaged solution produced by the KMP algorithm after $N$ steps.
    Then, $\forall \epsilon>0$,
    the DRO risk associated with the decision $\THKMP$
    is bounded by
    \begin{align*}
        \sup_{\mmd(\mu, \emp) \le \epsilon} \E_\mu l(\THKMP; x) -
         \underbrace{\inf_{\theta\in\Theta}\sup_{\mmd(\mu, \emp) \le \epsilon} \E_\mu l(\theta; x)}_{\textrm{(Optimal DRO risk)}}
         \leq {O}\left(\frac{1}{N}\right).
    \end{align*}
    In the stochastic setting, we find
    \begin{align*}
        \mathbb E\   
        \biggl[
         \sup_{\mmd(\mu, \emp) \le \epsilon} \E_\mu l(\THKMP; x) -
          \underbrace{\inf_{\theta\in\Theta}\sup_{\mmd(\mu, \emp) \le \epsilon} \E_\mu l(\theta; x)}_{\textrm{(Optimal DRO risk)}}
         \biggr]
          \leq {O}\left(\frac{1}{\sqrt N}\right).
     \end{align*}
    \end{proposition}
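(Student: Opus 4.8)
The quantity on the left is exactly the primal (in $\theta$) suboptimality of the averaged iterate $\THKMP$ for the convex--concave saddle-point problem \eqref{eq:kdro_smooth}, so the plan is to bound it by the duality gap of that problem and then invoke the convergence guarantees already established for KMP. By Lemma~\ref{thm:DRO_simple}, Assumptions~\ref{Asm:SPP_gen_convexity_app}--\ref{Asm:SPP_gen_stoch_app} hold for \eqref{eq:kdro_smooth}, with $L$ the explicit Lipschitz constant computed in the DRO set-up (a function of $C$, $L_0$, $L_1$ and $\epsilon$); hence Theorems~\ref{Th:ideal_MP_determ_app} and \ref{Th:ideal_MP_stoch_app} and their diameter-tuned variants apply. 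The first step is to invoke Lemma~\ref{thm-dro-reform-lemma} with $\theta$ held fixed, which rewrites the DRO risk as
\[
Z_{\epsilon,\emp}(\theta)=\inf_{f\in\rkhs}\ \sup_{\mu\in\mathcal M,\ h\in\rkhs:\,\|h\|_\rkhs\le 1}F(\theta,f,\mu,h),\qquad\text{so}\qquad \inf_\theta Z_{\epsilon,\emp}(\theta)=\inf_{\theta,f}\ \sup_{\mu,h}F(\theta,f,\mu,h).
\]

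From the per-iteration inequalities established inside the proof of Theorem~\ref{Th:ideal_MP_determ_app} (summing the four mirror-step regret bounds and using convexity of $F$ in $(\theta,f)$ and concavity in $(\mu,h)$) one gets, for \emph{every} feasible tuple $(\theta,f,\mu,h)$,
\[
F(\THKMP,\bar f_N,\mu,h)-F(\theta,f,\bar\mu_N,\bar h_N)\ \le\ \frac{8L}{N}\Bigl(\|\theta-\tilde\theta_0\|_2^2+\|f-\tilde f_0\|_\rkhs^2+2D_\Phi(\mu,\tilde\mu_0)+\|h-\tilde h_0\|_\rkhs^2\Bigr).
\]
Plugging the algorithm's output $f=\bar f_N$ into the outer infimum gives $Z_{\epsilon,\emp}(\THKMP)\le\sup_{\mu,h}F(\THKMP,\bar f_N,\mu,h)$; plugging a minimizer $(\theta^*,f^*)$ of $\sup_{\mu,h}F(\cdot,\cdot,\mu,h)$ (or an $\varepsilon$-minimizer, letting $\varepsilon\to0$) together with $(\bar\mu_N,\bar h_N)$ into the displayed inequality, and using $\sup_{\mu,h}F(\theta^*,f^*,\mu,h)=\inf_\theta Z_{\epsilon,\emp}(\theta)$, yields for each $(\mu,h)$ the bound $F(\THKMP,\bar f_N,\mu,h)\le \inf_\theta Z_{\epsilon,\emp}(\theta)+\frac{8L}{N}\bigl(\|\theta^*-\tilde\theta_0\|_2^2+\|f^*-\tilde f_0\|_\rkhs^2+2D_\Phi(\mu,\tilde\mu_0)+\|h-\tilde h_0\|_\rkhs^2\bigr)$. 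Taking the supremum over the admissible $(\mu,h)$ on the left then gives the desired $O(1/N)$ estimate, with constants controlled by $L$, by $\|\theta^*-\tilde\theta_0\|$ and $\|f^*-\tilde f_0\|$, and by the relevant supremum of the divergence/norm terms in $(\mu,h)$.

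The main obstacle is exactly controlling that last supremum. Over all of $\mathcal M$ the term $\sup_\mu D_\Phi(\mu,\tilde\mu_0)$ is infinite --- the supremum in $\mu$ of $F(\THKMP,\bar f_N,\mu,h)$ is only attained in the limit of measures concentrating on $\arg\max_{x\in\bar\Omega}\bigl(l(\THKMP;x)-\bar f_N(x)\bigr)$ --- so one cannot take the full supremum naively, which is precisely why Theorem~\ref{Th:ideal_MP_determ_app} is stated over a compact set $U=U_\theta\times U_f\times U_\mu\times U_h$. The resolution is the restricted-gap / variational-inequality argument discussed right after that theorem, which remains a valid convergence certificate as long as $U$ contains a neighborhood of the (appropriately interpreted) saddle point; the structure of \eqref{eq:kdro_smooth} is what makes this usable here, since for fixed $(\THKMP,\bar f_N)$ the inner maximization decouples into an extremum of $l(\THKMP;\cdot)-\bar f_N(\cdot)$ over the \emph{compact} domain $\bar\Omega$ plus the choice $h=\bar f_N/\|\bar f_N\|_\rkhs$ (whose $h$-diameter is at most $1+\|\tilde h_0\|_\rkhs$), and the optimal potential $f^*$ together with the relevant worst-case measure can be taken in a fixed compact set independent of $N$. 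The remaining work --- tracking how $L$ and the diameter of $U$ enter the constant, and, if one wishes $\max_{\mu\in U_\mu}D_\Phi(\mu,\tilde\mu_0)$ to be genuinely $O(1)$, passing to an entropically regularized DRO value --- does not affect the stated $O(1/N)$ rate.

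For the stochastic claim the entire chain of inequalities holds pointwise for each realization of the stochastic derivatives \eqref{eq:DRO_F_theta_stoch}--\eqref{eq:DRO_F_f_stoch}; taking expectations and invoking Theorem~\ref{Th:ideal_MP_stoch_app} with the diameter-tuned step size $\eta=\min\{1/(16L),\,\Omega_U\sigma/\sqrt{6N}\}$, together with the variance bounds $\sigma_\theta^2\le L_0^2$ and $\sigma_f^2\le 4C$ derived in the DRO set-up, gives $\E\bigl[Z_{\epsilon,\emp}(\THKMP)-\inf_\theta Z_{\epsilon,\emp}(\theta)\bigr]\le O(1/\sqrt N)$, which is the stochastic statement.
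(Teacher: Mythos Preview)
Your approach is essentially the same as the paper's: reduce the DRO suboptimality to the primal--dual gap of the smoothed saddle-point problem \eqref{eq:kdro_smooth} via Lemma~\ref{thm-dro-reform-lemma}, then invoke Theorem~\ref{Th:ideal_MP_determ_app} (respectively Theorem~\ref{Th:ideal_MP_stoch_app}) through Lemma~\ref{thm:DRO_simple}. The paper's argument is slightly terser: rather than fixing a specific minimizer $(\theta^*,f^*)$, it simply observes $Z_{\epsilon,\emp}(\THDRO)=\inf_{\theta,f}\sup_{\mu,h}F\ge \inf_{\theta,f}F(\theta,f,\bar\mu_N,\bar h_N)$ and $Z_{\epsilon,\emp}(\THKMP)=\inf_f\sup_{\mu,h}F(\THKMP,f,\mu,h)\le \sup_{\mu,h}F(\THKMP,\bar f_N,\mu,h)$, subtracts, and applies the theorem directly.

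You are more careful than the paper about one point: the unboundedness of $\sup_{\mu\in\mathcal M}D_\Phi(\mu,\tilde\mu_0)$, which is exactly why Theorem~\ref{Th:ideal_MP_determ_app} is stated over a compact $U$. The paper's proof simply invokes the theorem and does not revisit this issue; the intended justification is precisely the restricted-gap / variational-inequality discussion following Theorem~\ref{Th:ideal_MP_determ_app}, which you also point to. So your identification of the obstacle and your proposed resolution match the paper's implicit stance; the paper just does not spell it out in the proof of the proposition itself.
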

\begin{proof}
    We note the relation
    \begin{align*}
        Z_{\epsilon, \emp}(\THDRO) =
        \inf_{\theta \in \Theta, f\in \rkhs}
            \sup_{\mu\in \mathcal M, h \in \rkhs: \|h\|_{\rkhs}\leq 1}
      F({\theta},{f},\mu,h)  
      \\
      \geq 
      \inf_{\theta \in \Theta, f\in \rkhs}
      F({\theta},{f},\bar{\mu}_N,\bar{h}_N)  
    \end{align*}
    and
    \begin{align*}
        Z_{\epsilon, \emp}(\THKMP) =
        \inf_{f\in \rkhs}
            \sup_{\mu\in \mathcal M, h \in \rkhs: \|h\|_{\rkhs}\leq 1}
      F({\theta}_N,{f},\mu,h)  
      \\
      \leq
      \sup_{\mu\in \mathcal M, h \in \rkhs: \|h\|_{\rkhs}\leq 1}
F(\bar{\theta}_N,\bar{f}_N,\mu,h)  
      .
    \end{align*}
    Taking the difference of the above expressions, 
    \begin{align}
        Z_{\epsilon, \emp}(\THKMP)  -  Z_{\epsilon, \emp}(\THDRO) \leq 
        \sup_{\mu\in \mathcal M, h \in \rkhs: \|h\|_{\rkhs}\leq 1}
            F(\bar{\theta}_N,\bar{f}_N,\mu,h)  
            -
            \inf_{\theta \in \Theta, f\in \rkhs}
      F({\theta},{f},\bar{\mu}_N,\bar{h}_N)  
      .
    \end{align}
    Applying the statement of Theorem~\ref{Th:ideal_MP_determ_app} completes the proof for the deterministic case.
    The stochastic case follows from Theorem~\ref{Th:ideal_MP_stoch_app}.
\end{proof}

\subsubsection{Proof of Corollary~\ref{thm:rob-mu0}}
\begin{corollary}
    [Robustness guarantee for population risk]
    Suppose the ambiguity level $\epsilon$ is chosen such that $\epsilon > \epsilon_n(\delta)$.
        Then,
        with probability $1-\delta$,
        the population risk of the decision $\THKMP$ output by the KMP algorithm after $N$ steps is estimated from above by
    \begin{align*}
            \E_{\mu_0}[l(\THKMP; x)]
        -
        \inf_{\theta\in\Theta}\sup_{\mmd(\mu, \emp) \le \epsilon} \E_\mu l(\theta; x)
        \leq 
        O\left(\frac1N\right)
       .
    \end{align*}
    The bound is $O(\frac1{\sqrt{N}})$ in expectation in the stochastic case.
\end{corollary}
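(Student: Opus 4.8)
The plan is to decouple the statistical question (how close is $\emp$ to $\mu_0$?) from the optimization question (how suboptimal is $\THKMP$?), and then compose a standard MMD concentration bound with Proposition~\ref{thm:dro-subopt-guarantee}. The first step is to establish that, with probability at least $1-\delta$ over the $n$ i.i.d.\ samples defining $\emp$, one has $\mmd(\mu_0,\emp)\le \epsilon_n(\delta)$. This is the classical bound: the map $(x_1,\dots,x_n)\mapsto \mmd(\mu_0,\emp)$ has bounded differences with constant $2\sqrt{C}/n$ (perturbing one sample moves the empirical kernel mean embedding by at most $2\sqrt{C}/n$ in RKHS norm, using $k(x,x)\le C$ and the triangle inequality), so McDiarmid's inequality gives $\mmd(\mu_0,\emp)\le \E[\mmd(\mu_0,\emp)] + \sqrt{2C\log(1/\delta)/n}$ with probability $1-\delta$, while a symmetrization/Jensen argument bounds $\E[\mmd(\mu_0,\emp)]\le \sqrt{C/n}$. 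Together these match the definition $\epsilon_n(\delta) = \sqrt{C/n} + \sqrt{2C\log(1/\delta)/n}$, so $\mmd(\mu_0,\emp)\le \epsilon_n(\delta)$ holds on an event $\mathcal{A}$ with $\Pr(\mathcal{A})\ge 1-\delta$.

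The second step is the feasibility observation: on the event $\mathcal{A}$, since by hypothesis $\epsilon > \epsilon_n(\delta) \ge \mmd(\mu_0,\emp)$, the population measure $\mu_0$ lies in the ambiguity ball $\{\mu : \mmd(\mu,\emp)\le \epsilon\}$. Hence, with $Z_{\epsilon,\emp}$ as defined in the appendix,
\[
\E_{\mu_0}[l(\THKMP;x)] \;\le\; \sup_{\mmd(\mu,\emp)\le \epsilon}\E_\mu l(\THKMP;x) \;=\; Z_{\epsilon,\emp}(\THKMP).
\]
The third step applies Proposition~\ref{thm:dro-subopt-guarantee} in its deterministic form, which gives $Z_{\epsilon,\emp}(\THKMP) - \inf_{\theta\in\Theta}\sup_{\mmd(\mu,\emp)\le \epsilon}\E_\mu l(\theta;x) \le O(1/N)$. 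Chaining the last display with this bound yields the claimed estimate on $\mathcal{A}$, i.e.\ with probability $\ge 1-\delta$. For the stochastic variant, the data randomness (which defines $\mathcal{A}$) is independent of the algorithm's internal randomness in the stochastic gradients; conditioning on $\mathcal{A}$ and taking the expectation over the algorithmic randomness, the stochastic half of Proposition~\ref{thm:dro-subopt-guarantee} replaces $O(1/N)$ by $O(1/\sqrt{N})$, giving the in-expectation $O(1/\sqrt{N})$ bound.

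The only non-routine point, and hence the main obstacle, is making the MMD concentration precise: verifying the bounded-differences constant $2\sqrt{C}/n$ and the $\sqrt{C/n}$ bound on the expected deviation so that the threshold $\epsilon_n(\delta)$ is exactly recovered. Everything else is a direct composition — the feasibility inequality plus the already-proven optimization rate. One should also note that $\inf_{\theta\in\Theta}\sup_{\mmd(\mu,\emp)\le \epsilon}\E_\mu l(\theta;x)$ is itself a random quantity depending on $\emp$, but since both sides of every inequality in the chain share the same $\emp$, the argument holds pointwise on $\mathcal{A}$ and no further care is needed.
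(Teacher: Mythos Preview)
Your proposal is correct and follows essentially the same route as the paper: invoke the MMD concentration bound to get $\mmd(\mu_0,\emp)\le \epsilon_n(\delta)$ with probability $1-\delta$, observe that $\mu_0$ then lies in the ambiguity ball so the population risk is dominated by $Z_{\epsilon,\emp}(\THKMP)$, and finish with Proposition~\ref{thm:dro-subopt-guarantee}. The paper simply cites the concentration inequality rather than sketching the McDiarmid/symmetrization argument, but the structure is identical.
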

\begin{proof}
With probability $1-\delta$, the empirical estimation error of MMD can be upper bounded by \cite{Tolstikhin2017} 
    \begin{equation}
        \label{eq:rate-MMD}
        \mmd(\emp, \mu_0) \leq \epsilon_n = \sqrt{\frac{C}{n}} + \sqrt{\frac{2C\log(1/\delta)}{n}}.
    \end{equation}
Since, $\epsilon > \epsilon_n(\delta)$, we trivially have
\begin{align}
    \E_{\mu_0}[l(\THKMP; x)]\leq
        Z_{\epsilon, \emp}(\THKMP)
        .
\end{align}
Applying Proposition~\ref{thm:dro-subopt-guarantee}, we find
the desired estimate
    \begin{align}
        \E_{\mu_0}[l(\THKMP; x)]
        \leq
         Z_{\epsilon, \emp}(\THKMP)
        \leq 
        Z_{\epsilon, \emp}(\THDRO)
        + O\left(\frac1N\right)
       .
    \end{align}
\end{proof}

\subsubsection{Proof of Corollary~\ref{thm:rob-d-shift-sup}}
\begin{corollary}
    [Robustness guarantee under distribution shift]
    The risk under the worst-case distribution shift from the population distribution $\mu_0$, of any radius $\epsilon>0$,
    of the decision $\THKMP$ output by the KMP algorithm after $N$ steps
    is upper-bounded with large probability by
    \begin{align*}
        \sup_{\mmd(\mu, \mu_0) \le \epsilon}\E_\mu l(\THKMP; x)
        -
        \inf_{\theta\in\Theta}\sup_{\mmd(\mu, \emp) \le \epsilon} \E_\mu l(\theta; x)
        \leq 
        O\left(\max\left\{\frac{1}{ N}, \frac{1}{\sqrt n}\right\}\right)
       .
    \end{align*}

    In the stochastic setting, we find
    \begin{align*}
        \mathbb E 
        \biggl[\sup_{\mmd(\mu, \mu_0) \le \epsilon}\E_\mu l(\THKMP; x)
        -
        \inf_{\theta\in\Theta}\sup_{\mmd(\mu, \emp) \le \epsilon} \E_\mu l(\theta; x)
        \biggr]
        \leq 
        O\left(\max\left\{\frac{1}{ \sqrt {N}}, \frac{1}{\sqrt n}\right\}\right)
       .
    \end{align*}
\end{corollary}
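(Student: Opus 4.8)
The plan is to reduce the claim to the already‑established suboptimality guarantee for the \emph{empirical} DRO problem (Proposition~\ref{thm:dro-subopt-guarantee}) by an MMD triangle‑inequality argument that turns a shift ball around the population law $\mu_0$ into a slightly inflated ball around the empirical law $\emp$, and then to pay for that inflation with a stability estimate for the MMD‑DRO value in the ambiguity radius. Throughout I keep the notation $Z_{\epsilon,\emp}(\theta)=\sup_{\mmd(\mu,\emp)\le\epsilon}\E_\mu l(\theta;x)$ from the proof of Corollary~\ref{thm:rob-mu0}. First I would invoke the concentration bound \eqref{eq:rate-MMD} of \cite{Tolstikhin2017}: on an event $\mathcal E_\delta$ of probability at least $1-\delta$ one has $\mmd(\emp,\mu_0)\le\epsilon_n(\delta)=O(1/\sqrt n)$. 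On $\mathcal E_\delta$, since $\mmd$ is the (pseudo)metric induced by the kernel mean embedding, the triangle inequality gives the inclusion $\{\mu:\mmd(\mu,\mu_0)\le\epsilon\}\subseteq\{\mu:\mmd(\mu,\emp)\le\epsilon+\epsilon_n(\delta)\}$, and hence
\[
\sup_{\mmd(\mu,\mu_0)\le\epsilon}\E_\mu l(\THKMP;x)\ \le\ Z_{\,\epsilon+\epsilon_n(\delta),\,\emp}(\THKMP).
\]

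Next I would establish a radius‑stability bound: for every $\theta$, $Z_{\,\epsilon+\epsilon_n(\delta),\emp}(\theta)\le Z_{\epsilon,\emp}(\theta)+B\,\epsilon_n(\delta)$ with a constant $B$ independent of $\theta$ and $\epsilon$. This comes from the dual reformulation \eqref{eq-kdro-dual-ap-2}, $Z_{\epsilon,\emp}(\theta)=\inf_{f\in\rkhs}\big[\sup_{\mu}\E_\mu(l(\theta;x)-f(x))+\tfrac1n\sum_i f(x_i)+\epsilon\|f\|_\rkhs\big]$: plugging the minimizer $f^\star_\theta$ of the $\epsilon$‑problem into the $(\epsilon+\epsilon_n)$‑problem shows the gap is at most $\epsilon_n(\delta)\,\|f^\star_\theta\|_\rkhs$, and $\|f^\star_\theta\|_\rkhs$ is bounded uniformly in $\theta$ — for instance by $\sup_\theta\|l(\theta;\cdot)\|_\rkhs$, using that $f=l(\theta;\cdot)$ is dual‑feasible. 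Taking the infimum over $\theta$ also yields $\inf_\theta Z_{\,\epsilon+\epsilon_n(\delta),\emp}(\theta)\le\inf_\theta Z_{\epsilon,\emp}(\theta)+B\,\epsilon_n(\delta)$.

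Then I would chain the pieces. Applying the radius‑stability bound at $\theta=\THKMP$ and then Proposition~\ref{thm:dro-subopt-guarantee} at ambiguity level $\epsilon$ (its hypotheses hold for \eqref{eq:kdro_smooth} by Lemma~\ref{thm:DRO_simple}),
\[
Z_{\,\epsilon+\epsilon_n(\delta),\emp}(\THKMP)\ \le\ Z_{\epsilon,\emp}(\THKMP)+B\,\epsilon_n(\delta)\ \le\ \inf_{\theta\in\Theta}Z_{\epsilon,\emp}(\theta)+O\!\left(\tfrac1N\right)+B\,\epsilon_n(\delta),
\]
so combining with the first display and $\epsilon_n(\delta)=O(1/\sqrt n)$ gives, on $\mathcal E_\delta$, the deterministic bound $\sup_{\mmd(\mu,\mu_0)\le\epsilon}\E_\mu l(\THKMP;x)-\inf_\theta Z_{\epsilon,\emp}(\theta)\le O(\max\{1/N,1/\sqrt n\})$. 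For the stochastic statement I would run exactly the same chain but replace the deterministic bound of Proposition~\ref{thm:dro-subopt-guarantee} by its stochastic counterpart ($O(1/\sqrt N)$ in expectation over the oracle randomness, from Theorem~\ref{Th:ideal_MP_stoch_app}), working conditionally on the data so that $\mathcal E_\delta$ and $\epsilon_n(\delta)$ are fixed and then taking expectation over the stochastic oracle; this produces $O(\max\{1/\sqrt N,1/\sqrt n\})$.

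The main obstacle I anticipate is the radius‑stability estimate: one must show that inflating the MMD ambiguity radius by $\epsilon_n(\delta)$ increases the inner worst‑case risk by only $O(\epsilon_n(\delta))$ \emph{uniformly in $\theta$}, which is precisely where a uniform bound on the RKHS norm of the optimal dual witness — equivalently, a mild regularity condition on $l$ such as $\sup_\theta\|l(\theta;\cdot)\|_\rkhs<\infty$, or boundedness of $l$ together with compactness of $\Theta$ and of the sample space — enters. Everything else is a routine combination of the MMD triangle inequality, the dual reformulation of Lemma~\ref{thm-dro-reform-lemma}, and the KMP convergence rates already proved.
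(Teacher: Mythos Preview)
Your argument is correct and takes a genuinely different route from the paper. The paper proves the bound by the decomposition
\[
Z_{\epsilon,\mu_0}(\THKMP)-Z_{\epsilon,\emp}(\THDRO)=\bigl[Z_{\epsilon,\mu_0}(\THKMP)-Z_{\epsilon,\emp}(\THKMP)\bigr]+\bigl[Z_{\epsilon,\emp}(\THKMP)-Z_{\epsilon,\emp}(\THDRO)\bigr],
\]
handles the second bracket via Proposition~\ref{thm:dro-subopt-guarantee}, and for the first bracket works \emph{directly in the dual} \eqref{eq-kdro-dual-ap-2}: it invokes a uniform law of large numbers for $\tfrac1n\sum_i f(x_i)-\int f\,d\mu_0$ over the RKHS class, so that replacing $\emp$ by $\mu_0$ in the dual objective costs $O(1/\sqrt n)$ uniformly in $f$, and then takes $\inf_f\sup_\mu$ on both sides. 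In contrast, you keep the center at $\emp$ and move the \emph{radius}: MMD concentration plus the triangle inequality embeds the $\mu_0$-ball of radius $\epsilon$ into the $\emp$-ball of radius $\epsilon+\epsilon_n(\delta)$, and you then pay for the inflation via a Lipschitz estimate $\epsilon\mapsto Z_{\epsilon,\emp}(\theta)$ coming from the same dual.

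Both approaches are valid and both hinge on the same hidden regularity: a uniform bound on the RKHS norm of the optimal dual witness $f^\star_\theta$ (equivalently, that the relevant $f$'s live in a fixed RKHS ball). The paper needs this to make the uniform concentration bound nontrivial; you need it for the radius-stability constant $B$. You flag this explicitly as the main obstacle, which is a virtue --- and your proposed bound $\|f^\star_\theta\|_\rkhs\le\|l(\theta;\cdot)\|_\rkhs$ is indeed correct when $l(\theta;\cdot)\in\rkhs$ (compare the dual objective at $f^\star_\theta$ and at $f=l(\theta;\cdot)$ and use $\E_{\emp}g\le\sup_x g$). Your route is arguably more modular, reusing \eqref{eq:rate-MMD} as a single black box and isolating precisely what extra assumption on $l$ is required; the paper's route is shorter but leaves that assumption implicit.
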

\begin{proof}
The starting point of the proof is the decomposition
    \begin{align}
        \label{eq:decomposition-risk-d-shift}
        Z_{\epsilon, \mu_0}(\THKMP) - Z_{\epsilon, \emp}(\THDRO)
        =
        Z_{\epsilon, \mu_0}(\THKMP) - Z_{\epsilon, \emp}(\THKMP) 
        + Z_{\epsilon, \emp}(\THKMP)- Z_{\epsilon, \emp}(\THDRO).
    \end{align}
We already have the upper bound for the last two terms due to Proposition~\ref{thm:dro-subopt-guarantee}.
    We now estimate the first two terms by standard concentration inequalities and the uniform law of large numbers; see\eg \cite{boucheron_theory_2005,wainwright2019high}.
    First, for any $\theta$ in the domain and RKHS function $f\in\rkhs$, with probability at least $1-\delta$,
    \begin{align*}
        \int f \dd \mu_0
        \leq 
        \int f \dd \emp  
        + O\left(\sqrt{\frac{\log(1/\delta)}{ n}}\right).
    \end{align*}

    We consider the standard settting of an RKHS of functions with finite norms. Using uniform convergence, the term $O\left(\sqrt{\frac{\log(1/\delta)}{ n}}\right)$ does not depend on the specific function $f$ within the class $\mathcal{F}$, and the term is finite.
    Taking infimum on both sides,
    \begin{multline}
        \inf_f
        \biggl\{\int f \dd \mu_0 + \epsilon \|f \|_\rkhs +  \int \left(l(\theta;\cdot ) - f\right)\dd \mu 
        \biggr\}
        \leq 
        \\
        \inf_f
        \biggl\{\int f \dd \emp  +  \epsilon \|f \|_\rkhs +  \int \left(l(\theta;\cdot ) - f\right)\dd \mu\biggr\} + O\left(\sqrt{\frac{\log(1/\delta)}{ n}}\right).
    \end{multline}

    We use the shorthand notation 
    \begin{align}
        \tilde\mu_0:=\arg\sup_\mu\inf_f\int f \dd \mu_0 + \epsilon \|f \|_\rkhs +  \int \left(l(\theta;\cdot ) - f\right)\dd \mu 
        .
    \end{align}
    Then, by (strong) linear duality,
    \begin{multline}
        Z_{\epsilon, \mu_0}(\theta) = 
        \sup_\mu
        \inf_f
        \biggl\{\int f \dd \mu_0 + \epsilon \|f \|_\rkhs +  \int \left(l(\theta;\cdot ) - f\right)\dd \mu 
        \biggr\}
        \\
        \leq 
        \inf_f
        \biggl\{\int f \dd \emp  +  \epsilon \|f \|_\rkhs +  \int \left(l(\theta;\cdot ) - f\right)\dd\tilde\mu_0\biggr\}
        + O\left(\sqrt{\frac{\log(1/\delta)}{ n}}\right)
        \\
        \leq 
        \sup_\mu
        \inf_f
        \biggl\{\int f \dd \emp  +  \epsilon \|f \|_\rkhs +  \int \left(l(\theta;\cdot ) - f\right)\dd \mu\biggr\} 
        + O\left(\sqrt{\frac{\log(1/\delta)}{ n}}\right)
        \\
        = Z_{\epsilon, \emp}(\theta) 
        + O\left(\sqrt{\frac{\log(1/\delta)}{ n}}\right)
    \end{multline}
    holds with probability at least $1-\delta$.

    In summary, plugging in $\theta=\THKMP$, 
    \begin{align*}
        \mathbb P \Biggl(Z_{\epsilon, \mu_0}(\THKMP) 
        -
        Z_{\epsilon, \emp}(\THKMP) 
        \geq  O\left(\sqrt{\frac{\log(1/\delta)}{ n}}\right)\Biggr) \leq \delta.
    \end{align*}
    
    Furthermore, to derive the expected bound, we use the Gaussian integral to integrate the tail bound, combining with \eqref{eq:decomposition-risk-d-shift} and Proposition~\ref{thm:dro-subopt-guarantee}. We find the desired statement in expectation
    \begin{align}
        \E\biggl[
            Z_{\epsilon, \mu_0}(\THKMP) - Z_{\epsilon, \emp}(\THDRO)
        \biggr]
        \leq  O\left(\max\left\{\frac{1}{ N}, \frac{1}{\sqrt n}\right\}\right).
    \end{align}
    The stochastic case follows from the previous settings.
\end{proof}

\newpage

\end{document}